\newcounter{ipotesi}
 \makeatletter \@addtoreset{equation}{section}
\newtheorem{thm}{Theorem}[section]
\newtheorem{hyp}[thm]{Hypotheses}{\rm}
\newtheorem{hyp0}[thm]{Hypothesis}{\rm}
\newtheorem{lemm}[thm]{Lemma}
\newtheorem{coro}[thm]{Corollary}
\newtheorem{prop}[thm]{Proposition}
\newtheorem{rmk}[thm]{Remark}{\rm}
\newtheorem{example}[thm]{Example}
\newcounter{parentenv}
\newcommand{\R}{{\mathbb R}}
\newcommand{\N}{{\mathbb N}}
\newcommand{\Rd}{\mathbb R^d}
\newcommand{\Rn}{\mathbb R^n}
\newcommand{\Rm}{\mathbb R^m}
\newcommand{\T}{{\bm T}}
\newcommand{\g}{{\bm g}}
\newcommand{\f}{\bm f}
\newcommand{\uu}{{\bm u}}
\newcommand{\A}{\bm{\mathcal A}}
\newcommand{\1}{\mathds 1}
\newcommand{\vv}{{\bm v}}
\newcommand{\ww}{{\bm w}}
\newcommand{\G}{{\bm G}}
\newcommand{\RR}{{\bm R}}
\newcommand{\B}{{\bm{\mathcal B}}}
\begin{document}

\title[On weakly coupled systems]{On weakly coupled systems of partial differential equations with different diffusion terms}
\thanks{The authors are members of G.N.A.M.P.A. of the Italian Istituto Nazionale di Alta Matematica (INdAM).}
\author[D. Addona, L. Lorenzi]{Davide Addona, Luca Lorenzi}
\address{Dipartimento di Scienze Matematiche, Fisiche e Informatiche, Plesso di Matematica, Universit\`a di Parma, Parco Area delle Scienze 53/A, I-43124 PARMA (Italy)}
\email{davide.addona@unipr.it; https://orcid.org/0000-0002-6372-0334}
\email{luca.lorenzi@unipr.it; https://orcid.org/0000-0001-6276-5779}

\date{}

\keywords{Nonautonomous parabolic systems, unbounded coefficients, nonautonomous elliptic problems, optimal Schauder regularity, system of signed invariant measures}
\subjclass[2010]{Primary: 35K40, 35B65; Secondary: 35K45, 35B40, 37L40}

\begin{abstract}
We prove maximal Schauder regularity for solutions to elliptic systems and Cauchy problems, in the space $C_b(\Rd;\Rm)$ of bounded and continuous functions, associated to a class of nonautonomous weakly coupled second-order elliptic operators $\A$, with possibly unbounded coefficients and diffusion and drift terms which vary from equation to equation. We also provide estimates of the spatial derivatives up to the third-order and continuity properties both of the evolution operator $\G(t,s)$ associated to the Cauchy problem $D_t\uu=\A(t)\uu$ in $C_b(\Rd;\Rm)$, and, for fixed $\overline t$, of the semigroup $\T_{\overline t}(\tau)$ associated to the autonomous Cauchy problem $D_\tau\uu=\A(\overline t)\uu$ in $C_b(\Rd;\Rm)$. These results allow us to deal with elliptic problems whose coefficients also depend on time.
\end{abstract}
\maketitle

\section{Introduction}

In this paper, we consider weakly coupled nonautonomous elliptic operators $\bm {\mathcal A}$, defined on
smooth functions $\boldsymbol\zeta:\Rd\to\Rm$ ($m\ge 2$), by
\begin{align}\label{picco}
(\A(t)\boldsymbol\zeta(x))_k& =\sum_{i,j=1}^dq_{ij}^k(t,x)D_{ij}\zeta_k(x)+
\sum_{j=1}^db^k_j(t,x)D_j\zeta_k(x)+\sum_{h=1}^mc_{kh}(t,x)\zeta_h(x)\notag\\
& = {\rm Tr}(Q^k(t,x)D^2\zeta_k(x))+ \langle {\bm b}^k(t,x), \nabla \zeta_k(x)\rangle+ (C(t,x)\boldsymbol\zeta(x))_k,
\end{align}
$k=1,\ldots,m$, for every $(t,x)\in I\times \Rd$, with possible unbounded coefficients, and for every $T>s\in I$ (where $I$ is a right halfline, possibily $I=\R$) we prove maximal Schauder regularity for the solutions to the elliptic system
\begin{align}
\label{intro:ell_eq}
\lambda \uu(t,x)-\A(t)\uu(t,x)=\f(t,x), \quad t\in[s,T], \ x\in\Rd,
\end{align}
and to the Cauchy problem
\begin{align}
\label{intro:cau_pro}
\left\{
\begin{array}{lll}
D_t\uu(t,x)=\A(t)\uu(t,x)+\g(t,x), & t\in[s,T], & x\in\Rd, \vspace{1mm} \\
\uu(s,x)=\f(x), & & x\in\Rd.
\end{array}
\right.
\end{align}

The study of scalar equations with unbounded coefficients has been widely studied in the last two decades and the theory is well-established; we refer to the monograph \cite{Lo17} and the references therein for a systematic treatment.
On the other hand, the study of systems of Kolmogorov equations with unbounded coefficients is at the beginning and just few results are available.

One of the first papers dealing with systems of parabolic equations with unbounded coefficients is \cite{HLRS09}, where the authors study the realization of the weakly coupled elliptic operator $\A_p\uu:={\rm div} (Q\nabla \uu)+\langle F,\nabla \uu\rangle +V\uu$ in $L^p(\Rd;\Rm)$ and characterize its domain under suitable assumptions on the coefficients. Weakly coupled operators in the space of bounded and continuous functions $C_b(\Rd;\Rm)$ have been considered in \cite{DeLo11}, where the authors prove existence and uniqueness of a classical solution to the associated Cauchy problem. This allows to define a semigroup of bounded operators $(\T(t))_{t\geq0}$ on $C_b(\Rd;\Rm)$, and then to study some of its main properties, such as compactness, uniform estimates of the derivatives and optimal Schauder regularity.

A first improvement of the results in \cite{DeLo11} appears in \cite{AALT17}, where nonautonomous second-order operators coupled up to first order are considered. Also in this case, existence and uniqueness of a classical solution to the Cauchy problem associated with the operator $\A(t)$ allow the authors to define an evolution operator $(\G(t,s))_{t\geq s}$ on $C_b(\Rd;\Rm)$ and to investigate the main features of the evolution operator. In particular, thanks to weighted gradient estimates the authors prove the existence of Nash equilibria for a class of stochastic differential games.
The case of operators coupled up to the first-order has been considered also in \cite{AngLorPal16}, where the authors provide sufficient conditions for the evolution operator $(\G(t,s))_{t\geq s}$ to be extended to the space $L^p(\Rd;\Rm)$, and some improving summability properties of the evolution operator are shown. 

We stress that in all the quoted paper the diffusion coefficients are the same for each equation, i.e., the operator $\A(t)$ is defined on smooth functions $\f:\Rd\rightarrow \Rm$ by
\begin{align*}
\left((\A(t)\f)(x)\right)_k:=\sum_{i,j=1}^d q_{ij}(t,x) D_{ij}f_k(x)+\sum_{j=1}^d \sum_{h=1}^mb_{jh}^k(t,x)D_jf_h(x)+\sum_{h=1}^m c_{kh}(t,x)f_h(x),
\end{align*}
for every $k=1,\ldots,m$ and every $(t,x)\in I\times \Rd$. Under suitable conditions on the coefficients of the operator $\A$, this allows to apply a maximum principle, and so uniqueness of the classical solution to the Cauchy problem associated with $\A$ follows. We notice that this is the main effort when one deals with systems of Kolmogorov equations with unbounded coefficients, since existence of a classical solution usually follows by means of an approximation procedure which mimics the same technique in the case of a single equation (see e.g., \cite{MePaWa02}) and relies on a-priori (interior Schauder) estimates.

The first attempt to deal with operators with unbounded coefficients and diffusion terms which may vary from line to line appears in \cite{AngLor20}, where the authors study nonautonomous second-order weakly coupled operators defined as \eqref{picco}. Thanks to a generalization of a result in \cite{Pr67}, the authors get existence and uniqueness of a classical solution to the Cauchy problem associated with $\A$, and define an evolution operator $(\G(t,s))_{t\geq s}$ by means of this solution. Some remarkable properties of the evolution operator, such as its compactness, its action on some functional spaces and the existence of systems of family of invariant measures, are investigated. 
The approach in \cite{AngLor20} strongly relies on the assumption that the off-diagonal entries of the matrix $C(t,\cdot)$ are bounded from below for every $t\in I$.
In this paper we are able to remove this assumption thanks to a comparison principle, which allows us to ``control'' the solution of the Cauchy problem associated to $\A$ with that associated to a suitable nonautonomous operator $\A^P$ which satisfies the assumptions in \cite{AngLor20}.
Such an operator $\A^P$ is a fundamental auxiliary tool in the paper: it also appears in Section \ref{section:grad_est}, in order to get both the estimates for spatial derivatives of the evolution operator and the maximal Schauder regularity for problem \eqref{intro:ell_eq}, and in Section \ref{section:special_case} to extend some results in \cite{AdAnLo19} in the autonomous case, showing that the semigroup associated with a class of autonomous systems of ellipic operators extends to a strongly continuous semigroup on the $L^p$-space with respect to a system of {\em signed} invariant measures $\boldsymbol \mu=(\mu_1,\ldots,\mu_m)$ associated to the semigroup. An analogous result has been already proved in \cite{AdAnLo19}, but under the assumptions that the semigroup is nonnegative, and so each component of $\boldsymbol \mu$ is a positive measure, while, in our situation, the semigroup does not preserve positivity and we allow some component of $\boldsymbol \mu$ to be negative. To the best of our knowledge, this is the first time when a semigroup associated to a system of elliptic equations is extended to the $L^p$-spaces with respect to a system of invariant measures, without any positivity preserving condition on it. Also in \cite{AdAnLo18} systems of invariant measures for semigroups associated with a class of autonomous systems of elliptic operators, which do not preserve positivity, are studied, but no results on the possibility to extend the semigroups to $L^p$-spaces related to such systems of measures are established. We finally remark that an abstract approach to asymptotic behaviour of semigroups can be found in \cite{GeGlKu20}, but again under some positivity assumption on the semigroup. 

The paper is organized as follows.
In Section \ref{section:preliminaries}, we provide the main hypotheses on the coefficients of the operator $\A$ and fix some notation which will be useful in the rest of the paper. Further, we prove a comparison principle between the solutions of Cauchy problems on balls, with homogeneous Neumann boundary conditions, associated to $\A$ and $\A^P$, and, based on this result, we show existence, uniqueness and some continuity properties of the evolution operator $(\G(t,s))_{t\geq s}$ (simply denoted by $\G(t,s)$ from now on) associated to the nonautonomous equation $D_t\uu=\A\uu$ on $C_b(\Rd;\Rm)$. Finally, we deduce analogous results for the semigroup $(\T_{\overline t}(\tau))_{\tau\geq0}$ (simply denote by $\T_{\overline t}(\tau)$ from now on) associated to the autonomous equation $D_\tau\uu=\A(\overline t)\uu$ on $C_b(\Rd;\Rm)$,  where $\overline t\in I$ is fixed.

In Section \ref{section:grad_est}, under additional assumptions on the coefficients of the operator $\A$, we prove some estimates for the derivatives of the functions $\G(t,s)\f$ and $\T_{\overline t}(\tau)\f$. To be more precise, we show that for every integer numbers $0\leq h\leq k\leq 3$, every $[s,T]\subset I$, and for every $\overline t\in I$ and every $\mathcal T>0$ there exist positive constants $C$ and $K$ such that
\begin{align*}
\|\G(t,s)\f\|_{C_b^k(\Rd;\Rm)}& \leq \frac{Ce^{K(t-s)}}{(t-s)^{(k-h)/2}}\|\f\|_{C^h_b(\Rd;\Rm)}, \qquad\;\, \f\in C^h_b(\Rd;\Rm),\;\, t\in(s,T]. \\
\|\T_{\overline t}(\tau)\f\|_{C_b^k(\Rd;\Rm)}& \leq \frac{Ce^{K\tau}}{\tau^{(k-h)/2}}\|\f\|_{C^h_b(\Rd;\Rm)}, \qquad\;\, \f\in C^h_b(\Rd;\Rm),\;\, \tau\in (0,\mathcal T].
\end{align*}

These estimates are the starting point to prove, in Section \ref{sec:max_reg}, maximal Schauder regularity for problems \eqref{intro:ell_eq} and \eqref{intro:cau_pro}. 

In Section \ref{section:special_case}, assuming that the diffusion and drift coefficients are the same for each equation and the operator $\A$ is autonomous, we prove the existence of a system of invariant measures for the semigroup $\T(t)$ associated to the Cauchy problem $D_t\uu=\A\uu$,
characterizing them. Moreover, we show that such a semigroup extends to a strongly continuous semigroup on $L^p_{\boldsymbol \mu}(\Rd;\Rm)$ 
and  study the asymptotic behaviour of the semigroup $\T(t)$ as $t$ tends to $\infty$.

\medskip
\paragraph{\bf Notation}
Functions with values in $\R^m$ are displayed in bold style. Given a function $\f$ (resp. a sequence
$(\f_n)$) as above, we denote by $f_i$ (resp. $f_{n,i}$) its $i$-th component (resp. the $i$-th component
of the function $\f_n$). We denote by $|\f|$ the vector-valued function whose components are $|f_1|,\ldots,|f_m|$.
By $B_b(\Rd;\Rm)$ we denote the set of all the bounded Borel measurable functions $\f:\Rd\to\Rm$, where $\|\f\|_{\infty}^2=\sum_{k=1}^m\sup_{x\in\Rd}|f_k(x)|^2$.
For every $k\ge 0$, $C^k_b(\R^d;\R^m)$ is the space of all the functions
whose components belong to $C^k_b(\R^d)$, where ``$b$'' stays for bounded. Similarly, we use the subscript ``$c$'' for spaces of functions with compact support.
When $k\in (0,1)$ we write $C^k_{\rm loc}(\Rd)$ to denote the space of all the real-valued functions $f\in C(\Rd)$
which are H\"older continuous in every compact set of $\Rd$.
We assume that the reader is familiar with the parabolic spaces $C^{h+\alpha/2,k+\alpha}(I\times \Rd)$
($\alpha\in [0,1)$, $h,k\in\N\cup\{0\}$), and we use the subscript ``loc'' with the same meaning as above.
The symbols $D_tf$, $D_i f$, $D_{ij}f$ and $D_{ijh}f$, respectively, denote the time derivative $\frac{\partial f}{\partial t}$ and the spatial derivatives $\frac{\partial f}{\partial x_i}$, $\frac{\partial^2f}{\partial x_i\partial x_j}$
and $\frac{\partial^3f}{\partial x_i\partial x_j\partial x_h}$ for every $i,j,h=1, \ldots,d$.
For $k=1,2,3$, $|D_x^k\f|^2$ denotes the sum $\sum_{j=1}^m|D^k_xf_j|^2$,
where $|D_xf_j|^2=\sum_{i=1}^m|D_if_j|^2$,
$|D_x^2f_j|^2=\sum_{i,h=1}^m|D_{ih}f_j|^2$
and $|D_x^3f_j|^2=\sum_{i,h,k=1}^m|D_{ihk}f_j|^2$. If $k=1$ we write $D_x\f$ and $J_x\f$ indifferently for the Jacobian matrix of $\f$ with respect to the spatial variables. If $\f$ does not depend on $t$, we omit the subscript $x$  and we simply write $D^k\f$.
Similarly, we write $D^2_x\f$ and $D^3_x\f$, when $\f$ depends also on $t$, to denote differentiation with respect to the spatial variables.

By $e_j$ and $\1$ we denote, respectively, the $j$-th vector of the Euclidean basis of $\R^d$ and the function identically equal to $1$ in $\Rm$.
The open ball in $\Rd$ centered at $ 0$ with radius $r>0$ and its closure are denoted by  $B_r$ and $\overline B_r$, respectively.

\section{Hypotheses and preliminary results}
\label{section:preliminaries}
Besides the nonautonomous operator $\A$ defined in \eqref{picco}, we introduce the auxiliary nonautonomous operator ${\bm\A}^P$, defined on smooth functions $\f:\R^d\rightarrow \R^m$ by
\begin{align}
\label{auxiliary_operator}
(({\A}^P(t)\f)(x))_k:=\sum_{i,j=1}^nq^k_{ij}(t,x)D_{ij}f_k(x)+\sum_{j=1}^nb^k_j(t,x)D_jf_k(x)+(C^P(t,x)\f(x))_k
\end{align}
for every $k=1,\ldots,m$ and $(t,x)\in I\times \Rd$, where the matrix $C^P=(c_{hk}^P)$ is given by
\begin{align*}
c_{kh}^P:=
\begin{cases}
c_{kh}, & h=k, \\
|c_{kh}|, & h\neq k,
\end{cases}, \quad h,k=1,\ldots,m.
\end{align*}

Throughout the paper, if not otherwise specified, we assume the following assumptions on the coefficients of the operator $\bm{\mathcal A}$ and ${\A^P}$ in \eqref{picco} and in \eqref{auxiliary_operator}.

\begin{hyp}
\label{hyp-base}
\begin{enumerate}[\rm (i)]
\item
The coefficients $q^k_{ij}=q^k_{ji}$, $b^k_j$ and the entries $c_{kh}$ of the matrix-valued function $C$ belong to $C^{\alpha/2,\alpha}_{\rm loc}(I\times \Rd)$ for some $\alpha\in (0,1)$;
\item
for every bounded interval $J\subset I$, the infimum $\mu_0$ over $J\times \Rd$ of the minimum eigenvalue $\mu^k(t,x)$ of the matrix $Q^k(t,x)=(q^k_{ij}(t,x))$ is positive for every $k=1,\ldots,m$;
\item
for every bounded interval $J\subset I$ there exists a componentwise positive function $\boldsymbol\varphi_J\in C^2(\Rd;\Rm)$, blowing up componentwise as $|x|$ tends to $\infty$
such that ${\A^P}\boldsymbol\varphi_J\le \lambda_J\boldsymbol\varphi_J$ in $J\times\Rd$, for some positive constant $\lambda_J$;
\item
there does not exist a nontrivial set $K\subset \{1, \ldots, m\}$ such that the coefficients $c^P_{ij}$ identically vanish on $I\times \Rd$ for every $i\in K$ and $j\notin K$;
\item
for every bounded interval $J\subset I$, the sum of the elements of each row of $C^P$ is a bounded from above function on $J\times \Rd$.
\end{enumerate}
\end{hyp}

\begin{rmk}
{\rm 
Hypothesis $\ref{hyp-base}(iv)$ is a condition on the entries of the matrix-valued function $C^P$ which guarantees that both the differential systems \eqref{pioggia} and \eqref{cauchy_prob_pos} cannot be (partially) decoupled.
It is not hard to see that if $\A$ satisfies \cite[Hypotheses 2.1]{AngLor20}, then the operators $\A$ and $\A^P$ satisfy Hypotheses \ref{hyp-base}} above.
\end{rmk}

For every bounded interval $J\subset  I$ we set
\begin{align}
\label{def_M_T}
M_{J}:=\max_{k=1,\ldots,m}\sup_{(t,x)\in J\times \Rd}\sum_{j=1}^mc_{kj}^P(t,x)=:\max_{k=1,\ldots,m}\sup_{(t,x)\in J\times \Rd}M_k(t,x).
\end{align}

For every $s,t\in I$, every $\alpha\in[0,1)$ and every bounded open set $\Omega\subset \Rd$ we denote by $\|\A(t)\|_{\alpha,\Omega}$ and by $\|\A(t)-\A(s)\|_{\alpha,\Omega}$ the following quantities:
\begin{align*}
\|\A(t)\|_{\alpha,\Omega}:=& \max_{
{i,j=1,\ldots,d}\atop{h,k=1,\ldots,m}}
\left\{\|q^k_{ij}(t,\cdot)\|_{C_b^\alpha(\Omega)}, \|b^k_{i}(t,\cdot)\|_{C_b^\alpha(\Omega)}, \|c_{kh}(t,\cdot)\|_{C_b^\alpha(\Omega)}\right\}, \\
\|\A(t)-\A(s)\|_{\alpha,\Omega}:=& \max_{{i,j=1,\ldots,d}\atop{ h,k=1,\ldots,m}}\left\{\|q^k_{ij}(t,\cdot)-q_{ij}^k(s,\cdot)\|_{C_b^\alpha(\Omega)}, \|b^k_{i}(t,\cdot)-b_i^k(s,\cdot)\|_{C_b^\alpha(\Omega)},  \right.\\
& \qquad\qquad\;\;  \left.\|c_{kh}(t,\cdot)-c_{kh}(s,\cdot)\|_{C_b^\alpha(\Omega)}\right\},
\end{align*}

\paragraph{\bf {Further notation}.}
Let us fix $R>0$.
\begin{enumerate}[(i)]
\item 
We denote by $\G_{R}^{\mathcal N}(t,s)$ (resp. $\G_{R}^{\mathcal N,P}(t,s)$) the evolution operator associated to the realization of the nonautonomous operator $\A$ (resp. $\A^p$) in $C(\overline{B_R};\Rm)$ with homogeneous Neumann boundary conditions. 
\item 
For every fixed $\overline t\in I$ we denote by $\T_{\overline t}^{\mathcal N,R}(\tau)$ (resp. $\T_{\overline t}^{\mathcal N,R,P}(\tau)$) the semigroup associated to the realization of the operator $\A(\overline t)$ (resp. $\A^P(\overline t)$) in $C(\overline{B_R};\Rm)$ with homogeneous Neumann boundary conditions.
\end{enumerate}
We notice that the existence of the above families of operators is guaranteed by \cite[Section IV.2, Theorem 3.4]{Ei69}.

Fix $s\in I$ and consider the Cauchy problems
\begin{equation}
\left\{
\begin{array}{lll}
D_t\uu(t,x)=(\A(t)\uu)(t,x), & t\in(s,\infty), &x\in\Rd,\\[1mm]
\uu(s,x)=\f(x), && x \in \Rd,
\end{array}
\right.
\label{pioggia}
\end{equation}
and
\begin{equation}
\left\{
\begin{array}{lll}
D_t\uu(t,x)=(\A^P(t)\uu)(t,x), & t\in(s,\infty), &x\in\Rd,\\[1mm]
\uu(s,x)=\f(x), && x \in \Rd.
\end{array}
\right.
\label{cauchy_prob_pos}
\end{equation}
By \cite[Theorem 2.4]{AngLor20} problem \eqref{cauchy_prob_pos} admits a unique classical solution $\uu^P$, which is bounded in each strip $[s,T]\times \Rd\subset I\times\Rd$, belongs to $ C^{1+\alpha/2,2+\alpha}_{\rm loc}((s,\infty)\times\Rd;\Rm)\cap C([s,\infty)\times\Rd;\Rm)$ and satisfies the estimate
$\|\uu^P(t,\cdot)\|_{\infty}\le e^{K_{[s,T]}(t-s)}\|\f\|_{\infty}$ for every $t\in[s,T]$, where $K_{[s,T]}$ is explicitly computed.

The aim of this section is to prove that for every $\f\in C_b(\Rd;\R^m)$ the Cauchy problem \eqref{pioggia} admits a unique classical solution which is bounded in the strip $J\times\Rd$ for every bounded interval $J\subset [s,\infty)$, where by classical solution we mean a function $\uu\in C^{1,2}((s,\infty)\times\Rd;\Rm)\cap C([s,\infty)\times\Rd;\Rm)$ which solves \eqref{pioggia}.

We begin by considering the following proposition.

\begin{prop}
\label{prop:confronto_1}
Let Hypotheses $\ref{hyp-base}(i)$-$(iv)$ be satisfied. Then, for each $n\in\N$, $\f\in C_b(\R^d;\R^m)$ and $j\in\{1,\ldots,m\}$ it holds that $|(\G_n^{\mathcal N}(t,s)\f)_j|\leq (\G_n^{\mathcal N,P}(t,s)|\f|)_j$ in $B_n$ for every $t>s$.
\end{prop}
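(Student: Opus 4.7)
The plan is to reduce the componentwise inequality $|(\G_n^{\mathcal N}(t,s)\f)_j|\le(\G_n^{\mathcal N,P}(t,s)|\f|)_j$ to two nonnegativity statements by setting
\begin{align*}
\uu(t,\cdot):=\G_n^{\mathcal N}(t,s)\f,\qquad \vv(t,\cdot):=\G_n^{\mathcal N,P}(t,s)|\f|,\qquad \ww_j^{\pm}:=v_j\pm u_j,
\end{align*}
for $j=1,\ldots,m$. Since $|u_j|\le v_j$ is equivalent to $\ww_j^{+}\ge 0$ and $\ww_j^{-}\ge 0$, it suffices to prove both families are nonnegative on $[s,\infty)\times\overline{B_n}$. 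The initial data are nonnegative ($|f_j|\pm f_j\ge 0$), and because both $\uu$ and $\vv$ satisfy the homogeneous Neumann condition on $\partial B_n$, so do $\ww_j^{\pm}$.

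Next I would exploit the definition of $C^P$ to check that the $2m$ functions $(\ww_j^{+},\ww_j^{-})_{j=1}^m$ solve a weakly coupled parabolic system with cooperative (Metzler) coupling. Writing $L_j:={\rm Tr}(Q^j D^2\cdot)+\langle\bm b^j,\nabla\cdot\rangle$, subtracting and adding the equations for $u_j$ and $v_j$ gives
\begin{align*}
D_t\ww_j^{+}&=L_j\ww_j^{+}+c_{jj}\ww_j^{+}+\sum_{h\neq j}\bigl(|c_{jh}|v_h+c_{jh}u_h\bigr),\\
D_t\ww_j^{-}&=L_j\ww_j^{-}+c_{jj}\ww_j^{-}+\sum_{h\neq j}\bigl(|c_{jh}|v_h-c_{jh}u_h\bigr).
\end{align*}
For each $h\neq j$, splitting according to the sign of $c_{jh}$ shows that the cross-term $|c_{jh}|v_h\pm c_{jh}u_h$ equals $|c_{jh}|\ww_h^{+}$ or $|c_{jh}|\ww_h^{-}$; in every case it is a nonnegative coefficient times one of the unknowns $\ww_h^{\pm}$. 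Thus all off-diagonal couplings in the $2m$-dimensional system are nonnegative, which is precisely the structural feature encoded in the definition of $C^P$.

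Finally, I would apply the maximum principle for weakly coupled parabolic systems with cooperative coupling on a bounded cylinder with homogeneous Neumann boundary conditions. Fix $T>s$, set $\tilde\ww_j^{\pm}:=e^{-\Lambda(t-s)}\ww_j^{\pm}$ and choose $\Lambda>0$ strictly greater than $\sup_{[s,T]\times\overline{B_n}}\sum_{h}c_{jh}^{P}$ for all $j$ (finite by continuity of the coefficients on a compact set). At a hypothetical negative minimum of some $\tilde\ww_{j_0}^{\sigma}$ attained at $(t_0,x_0)$ with $t_0>s$, one has $D_t\tilde\ww_{j_0}^{\sigma}\le 0$, $L_{j_0}\tilde\ww_{j_0}^{\sigma}\ge 0$ (uniform ellipticity in Hypothesis \ref{hyp-base}(ii)), and every coupling term is bounded below by $c_{jh}^{P}$ times the minimum value; the usual row-sum computation then contradicts the choice of $\Lambda$. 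Pulling back the rescaling yields $\ww_j^{\pm}\ge 0$ on $[s,T]\times\overline{B_n}$, and since $T>s$ is arbitrary, the claim follows.

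The main obstacle will be handling the case in which the minimum is attained on the Neumann boundary $\partial B_n$, where the interior test $\nabla\tilde\ww_{j_0}^{\sigma}(t_0,x_0)=0$ is replaced by $\partial_\nu\tilde\ww_{j_0}^{\sigma}(t_0,x_0)=0$ and one cannot immediately conclude $L_{j_0}\tilde\ww_{j_0}^{\sigma}(t_0,x_0)\ge 0$. The standard remedy is either to invoke a Hopf-type lemma tailored to Neumann conditions, or to add a perturbation of the form $\varepsilon(e^{\mu t}-1)\bm{\mathds 1}$ (with $\mu$ large) to convert the inequalities into strict ones, eliminate the boundary possibility via the Neumann condition, and finally let $\varepsilon\to 0^{+}$. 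Either of these devices is classical for cooperative weakly coupled parabolic systems and, combined with the structural computation above, completes the proof.
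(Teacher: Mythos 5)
Your proof is correct, but it follows a genuinely different route from the paper's. You \emph{double} the system: setting $\ww_j^{\pm}=v_j\pm u_j$ and decomposing $c_{jh}=c_{jh}^{+}-c_{jh}^{-}$, $|c_{jh}|=c_{jh}^{+}+c_{jh}^{-}$ (rather than arguing pointwise by the sign of $c_{jh}$, which is cleaner because it produces single $\alpha$-H\"older coefficients rather than a case distinction), the $2m$ unknowns satisfy
\begin{align*}
D_t\ww_j^{\pm}=L_j\ww_j^{\pm}+c_{jj}\ww_j^{\pm}+\sum_{h\neq j}\bigl(c_{jh}^{+}\ww_h^{\pm}+c_{jh}^{-}\ww_h^{\mp}\bigr)
\end{align*}
with nonnegative initial data, homogeneous Neumann boundary data and nonnegative off-diagonal couplings $c_{jh}^{\pm}\ge 0$, so the claim is one invocation of a maximum principle for cooperative weakly coupled parabolic systems; you could in fact quote \cite[Theorem~3.15]{Pr67} (already used by the paper in Section~3 with Neumann conditions) and skip the sketch of the boundary-minimum analysis entirely. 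The paper's own proof takes an integral route instead: it isolates the diagonal scalar Neumann evolution operators $G_{n,k}^{\mathcal N}$, uses their positivity to pass from the variation-of-constants formula to the inequality $|u_k|\le G_{n,k}^{\mathcal N}(t,s)|f_k|+\int_s^t G_{n,k}^{\mathcal N}(t,r)\sum_{j\neq k}c_{kj}^{P}|u_j|\,dr$, and then propagates the sign of $\ww=|\uu|-\vv$ forward in time by a Gronwall estimate combined with a sliding-interval argument. Your doubling trick is more self-contained, avoids manipulating $|\uu|$ (whose lack of regularity at sign changes is exactly what both approaches must sidestep), and reduces everything to a standard cooperative maximum principle; the paper's Duhamel route buys the opposite convenience of never performing a pointwise second-derivative test, since all boundary behaviour is packaged into the positivity of the scalar Neumann evolution operators. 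Both are valid.
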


\begin{proof}
We fix $n\in\N$, $\f\in C_b(\Rd;\Rm)$ and set $\uu:=\G^{\mathcal N}_n(\cdot,s)\f$, $\bm v:=\G^{\mathcal N,P}_n(\cdot,s)|\f|$. For every $k=1,\ldots,m$ we denote by $G_{n,k}^{\mathcal N}(t,s)$ the positive evolution operator associated in $C(\overline{B_n})$ to the realization of the scalar operator
${\mathcal A}_{n,k}=\sum_{i,j=1}^d q_{ij}^kD_{ij}+\sum_{j=1}^db_{j}^kD_j+c_{kk}$,
 with homogeneous Neumann boundary conditions, and note that for every $T>s$ we can estimate $\|G^{\mathcal N}_{n,k}(t,s)\|_{L(C(\overline{B_n}))}\leq e^{c_{T,k,n}}$ for every $k=1,\ldots,m$ and $t\in[s,T]$, where $c_{k,T,n}:=\sup_{(t,x)\in (s,T)\times B_n}c_{kk}(t,x)$. 
 
 Let us fix $T>s$ and prove the statement for $t\in[s,T]$; the arbitrariness of $T$ will yield the assertion. By the variation-of-constants formula, we can write
\begin{align*}
u_k(t,x)
= & (G^{\mathcal N}_{n,k}(t,s)f_k)(x)+\int_s^t\bigg (G^{\mathcal N}_{n,k}(t,r)\sum_{j\neq k}c_{kj}(r,\cdot)u_j(r,\cdot)\bigg )(x)dr,\\[1mm]
v_k(t,x)
= & (G^{\mathcal N}_{n,k}(t,s)|f_k|)(x)+\int_s^t\bigg (G^{\mathcal N}_{n,k}(t,r)\sum_{j\neq k}c^P_{kj}(r,\cdot)v_j(r,\cdot)\bigg )(x)dr
\end{align*}
for every $(t,x)\in[s,T]\times\overline{B_n}$. Moreover, the positivity of the evolution operator $G^{\mathcal N}_{n,k}(t,s)$ implies that
\begin{align*}
|u_k(t,x)|\leq & (G^{\mathcal N}_{n,k}(t,s)|f_k|)(x)+\int_s^t\bigg (G^{\mathcal N}_{n,k}(t,r)\sum_{j\neq k}c^P_{kj}(r,\cdot)|u_j(r,\cdot)|\bigg )(x)dr
\end{align*}
for every $(t,x)\in[s,T]\times B_n$, so that
\begin{align*}
w_k(t,x)
\le & \int_s^t\bigg (G^{\mathcal N}_{n,k}(t,r)\sum_{j\neq k}c^P_{kj}(r,\cdot)w_j(r,\cdot)\bigg )(x)dr, \qquad\;\, (t,x)\in[s,T]\times B_n,
\end{align*}
where $\ww=|\uu|-\vv$ satisfies the inequality $\ww(s,\cdot)\le\bm 0$ in $B_n$.

To show that $\ww\le\bm 0$ on $[s,T]\times B_n$, we just need to prove that, if
\begin{align}
w_k(t,x)
\le & \int_{\sigma}^t\bigg (G^{\mathcal N}_{n,k}(t,r)\sum_{j\neq k}c^P_{kj}(r,\cdot)w_j(r,\cdot)\bigg )(x)dr
\label{quattro_1}
\end{align}
for every $(t,x)\in (\sigma,T)\times B_n$, $k=1,\ldots,m$ and $\bm w(\sigma,\cdot)\le\bm 0$ in $B_n$ for some $\sigma\in [s,T)$, then there exists $\delta>0$ such that $\bm w\le\bm 0$ in $[\sigma,\sigma+\delta]\times B_n$.
Indeed, once this property is proved, we can introduce the set
$E=\{t\in [s,T]:\ww(t,\cdot)\leq\bm 0{\textrm{ in }}B_n\}$, which contains $s$ and is an interval due to \eqref{quattro_1}.
Note that $E=[s,T]$. By contradiction, assume that $\tau=\sup E<T$. By continuity, it follows that $\ww(r,\cdot)\leq\bm 0$ in $B_n$  for every $r\in [s,\tau]$.
Since $\displaystyle \sum_{j\neq k}c^P_{kj}(r,\cdot)w_j(r,\cdot)\le 0$ in $B_n$ for every $r\in [s,\tau]$, from \eqref{quattro_1} we can
write
\begin{eqnarray*}
w_k(t,x)
\le \int_{\tau}^t\bigg (G^{\mathcal N}_{n,k}(t,r)\sum_{j\neq k}c^P_{kj}(r,\cdot)w_j(r,\cdot)\bigg )(x)dr, \qquad\;\, (t,x)\in[\tau,T]\times B_n
\end{eqnarray*}
and from this inequality we conclude that $\bm w\le\bm 0$ in a right neighborhood of $\tau$, which is a contradiction.

Let us prove \eqref{quattro_1} by contradiction. For this purpose, for every $h\in\N$ we denote by $J_h$ the set of all the indexes in $\{1,\ldots,m\}$ such that the condition $w_j\le 0$ is not satisfied in $[\sigma,\sigma+h^{-1}]\times\overline{B_n}$, where $h\in\N$ is such that $h^{-1}<T-\sigma$. Note that there exists at least an index $j_0$ such that $j_0\in J_h$ for infinite values of $h$ (let us say for every $h$ in an increasing sequence of natural numbers $(h_k)$). Finally, denote by $J$ the set of all $j\in\{1,\ldots,m\}$ such that $j\in J_{h_k}$ for every $k\in\N$.
For such values of $j$, we can determine
a decreasing sequence $(t_{k,j})\subset (\sigma,T]$ converging to $\sigma$ and $(x_{k,j})\subset\overline{B_n}$ such that $w_j(t_{k,j},x_{k,j})>0$ for every $k\in\N$. This implies that $h_j(r):=\sup_{({\sigma},r)\times B_n}w_j>0$ for every $r>{\sigma}$ and $j\in J$. Note that each function $h_j$ is continuous in $[{\sigma},\infty)$ since both $|\uu|$ and $\vv$ are continuous in $[s,T]\times \overline{B_n}$.
Let $\delta'>0$ be such that $w_i\le 0$ in $[\sigma,\sigma+\delta']\times B_n$ for every $j\in \{1,\ldots,m\}\setminus J$.
If $k\in J$, then we can estimate
\begin{align*}
w_{k}(t,x)
\leq  & \int_{\sigma}^t \bigg (G^{\mathcal N}_{n,k}(t,r)\sum_{k\neq j\in J}c_{kj}^P(r,\cdot)w_j(r,\cdot)\bigg )(x)dr \notag\\
\leq  & \int_{\sigma}^t \bigg (G^{\mathcal N}_{n,k}(t,r)\sum_{k\neq j\in J}c_{kj}^P(r,\cdot)h_j(r)\bigg )(x)dr \notag\\
\leq &  C\int_{\sigma}^t \sum_{j\in J}h_j(r)dr
\end{align*}
for every $(t,x)\in [\sigma,\sigma+\delta']\times\overline{B_n}$,
where we have used the fact that $c_{kj}^P\ge 0$ for every $j\neq k$, $h_j(r)>0$ for every $r\in({\sigma},T]$ and $j\in J$, and
\begin{align*}
C:=(e^{c_{T,k,n}}\vee 1)\sup_{{j,k\in J}\atop{j\neq k}}\|c_{kj}^P\|_{C([s,T]\times\overline{B_n})}<\infty.
\end{align*}
Hence, if we fix $t\in[\sigma,\sigma+\delta']$, then for every $\tau\in[\sigma,t]$ we can write
\begin{align*}
w_k(\tau,x)
\leq C\int_{\sigma}^{\tau} \sum_{j\in J}h_j(r)dr\leq C\int_\sigma^t \sum_{j\in J}h_j(r)dr.
\end{align*}
By taking the supremum of $(\tau,x)$ over $(\sigma,t)\times B_n$ and summing over all $k\in J$, we get
\begin{align*}
\sum_{k\in J}h_k(t)\leq Cm\int_{\sigma}^t \sum_{k\in J}h_k(r)dr, \qquad\;\, t\in[\sigma,\sigma+\delta'].
\end{align*}
Gronwall Lemma implies that $\sum_{k\in J}h_k(t)\leq 0$ in $[\sigma,\sigma+\delta']$, which contradicts the fact that $h_k(t)>0$ for every $t>{\sigma}$ and $k\in J$.
\end{proof}

\begin{prop}
\label{prop-2.6}
Under Hypotheses $\ref{hyp-base}$,
fix $s\in I$, $T>s$ and let $M:=M_{[s,T]}$ be the constant in \eqref{def_M_T}. Then, the following properties are satisfied.
\begin{enumerate}[\rm (i)]
\item
The unique solution $\uu^P$ to problem \eqref{cauchy_prob_pos} satisfies the estimate
\begin{align}
\max_{k=1,\ldots,d}|u^P_k(t,x)|\leq e^{M(t-s)} \max_{k=1,\ldots,m}\|f_k\|_\infty, \qquad\;\, (t,x)\in [s,T]\times \Rd.
\label{tre}
\end{align}
\item
There exists a unique classical solution $\uu$ to problem \eqref{pioggia}, which is bounded in each strip $[s,T]\times\Rd$. Moreover, if we denote by $\widetilde \uu^P$ the solution to \eqref{cauchy_prob_pos} with initial datum $|\f|$, then $|u_k(t,x)|\leq |\widetilde u_k^P(t,x)|$ for every $k=1,\ldots,m$ and $(t,x)\in[s,T]\times \Rd$. As a consequence,
\begin{align}
\label{stima_uni_gen}
\max_{k=1,\ldots,d}|u_k(t,x)|\leq e^{M(t-s)} \max_{k=1,\ldots,m}\|f_k\|_\infty, \qquad\;\, (t,x)\in [s,T]\times \Rd.
\end{align}
\end{enumerate}
\end{prop}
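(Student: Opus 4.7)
My strategy is a classical maximum-principle argument on balls, followed by a passage to the limit. Set $K:=\max_k\|f_k\|_\infty$ and, for each $n\in\N$, consider on $[s,T]\times\overline{B_n}$ the auxiliary vector field $\vv^\pm_n:=Ke^{M(t-s)}\one\pm\G^{\mathcal N,P}_n(\cdot,s)\f$. Since $\one$ has vanishing spatial derivatives and $(\A^P(t)\one)_k=M_k(t,\cdot)$, a direct computation gives
\begin{align*}
(D_t\vv^\pm_n-\A^P(t)\vv^\pm_n)_k=Ke^{M(t-s)}(M-M_k(t,x))\ge 0,\qquad k=1,\ldots,m,
\end{align*}
while $\vv^\pm_n(s,\cdot)\ge\bm 0$ (because $|f_k|\le K$) and $\partial_\nu\vv^\pm_n=\bm 0$ on $\partial B_n$. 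As $C^P$ has nonnegative off-diagonal entries, the classical maximum principle for weakly coupled parabolic systems on bounded cylinders yields $\vv^\pm_n\ge\bm 0$, i.e., $|(\G^{\mathcal N,P}_n(t,s)\f)_k|\le Ke^{M(t-s)}$ on $\overline{B_n}$. Standard interior Schauder estimates, together with the uniqueness statement of \cite[Theorem~2.4]{AngLor20}, imply that $\G^{\mathcal N,P}_n(\cdot,s)\f$ converges locally uniformly to $\uu^P$ as $n\to\infty$, and passing to the limit gives \eqref{tre}.

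\textbf{Part (ii) -- existence and the bound $|u_k|\le\widetilde u_k^P$.} I would construct $\uu$ as a locally uniform limit of the Neumann approximations $\uu_n:=\G^{\mathcal N}_n(\cdot,s)\f$. Proposition \ref{prop:confronto_1} gives the pointwise bound $|u_{n,j}|\le(\G^{\mathcal N,P}_n(t,s)|\f|)_j$ in $\overline{B_n}$, and the argument of Part (i) applied with $|\f|$ in place of $\f$ bounds the right-hand side by $\|\f\|_\infty e^{M(t-s)}$. Hence $(\uu_n)$ is uniformly bounded on $[s,T]\times\overline{B_r}$ for every $r<n$. Since each component $u_{n,k}$ solves a scalar parabolic equation with $C^{\alpha/2,\alpha}_{\rm loc}$-coefficients, interior Schauder estimates yield uniform local $C^{1+\alpha/2,2+\alpha}$-bounds; a diagonal Arzel\`a--Ascoli extraction then produces a subsequence converging in $C^{1,2}_{\rm loc}$ to a function $\uu\in C^{1,2}((s,\infty)\times\Rd;\Rm)\cap C([s,\infty)\times\Rd;\Rm)$ which solves \eqref{pioggia} classically and is bounded on each strip. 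Passing to the limit in the comparison inequality, and recalling that $\G^{\mathcal N,P}_n(\cdot,s)|\f|\to\widetilde\uu^P$ by \cite[Theorem~2.4]{AngLor20}, yields $|u_j|\le\widetilde u_j^P$; combining this with Part (i) applied to $\widetilde\uu^P$ gives \eqref{stima_uni_gen}.

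\textbf{Part (ii) -- uniqueness.} To establish uniqueness -- and, in fact, the stronger fact that \emph{every} bounded classical solution of \eqref{pioggia} satisfies $|u_k|\le\widetilde u_k^P$ -- I would transfer the variation-of-constants argument of Proposition \ref{prop:confronto_1} to the whole space. For each $k$, the scalar Kolmogorov operator $\mathcal A_k:={\rm Tr}(Q^k(t,\cdot)D^2)+\langle\bm b^k(t,\cdot),\nabla\rangle+c_{kk}(t,\cdot)$ admits a positive evolution operator $G_k(t,s)$ on $C_b(\R^d)$: the $k$-th component of $\boldsymbol\varphi_J$ from Hypothesis \ref{hyp-base}(iii) is a scalar Lyapunov function for $\mathcal A_k$, and $c_{kk}\le\sum_h c^P_{kh}\le M_J$ is bounded above on $J\times\R^d$, so the scalar theory of \cite{Lo17} applies. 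Writing $u_k$ via the scalar variation-of-constants formula with forcing $\sum_{j\neq k}c_{kj}u_j$, taking absolute values, and using positivity of $G_k$ together with $|c_{kj}|=c^P_{kj}$ ($j\neq k$), the difference $w_k:=|u_k|-\widetilde u_k^P$ satisfies an integral inequality of the form \eqref{quattro_1} with $w_k(s,\cdot)\le 0$. Since $c^P_{kj}$ ($j\neq k$) is bounded on $[s,T]\times\R^d$ by $M_{[s,T]}$ and the operator norm of $G_k(t,r)$ on $C_b(\R^d)$ is controlled on bounded strips, the Gronwall iteration of Proposition \ref{prop:confronto_1} can be repeated on the unbounded domain, yielding $\ww\le\bm 0$. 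Uniqueness follows by applying this to the difference of two bounded classical solutions (for which $|\f|=\bm 0$ and hence $\widetilde\uu^P\equiv\bm 0$). The main difficulty I anticipate is verifying that the Gronwall scheme, originally designed on bounded domains, still closes on $\R^d$; this relies precisely on the uniform boundedness of the off-diagonal coefficients $c_{kj}^P$ and on the well-posedness of the scalar evolution operators $G_k$.
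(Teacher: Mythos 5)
Your Part (i) and the existence half of Part (ii) are sound and follow essentially the same path as the paper (the paper applies the maximum principle of \cite[Theorem~2.3]{AngLor20} directly on $\R^d$ rather than on balls, but that is a cosmetic difference). The difficulty is in your uniqueness argument.

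Your plan is to transplant the variation-of-constants plus Gronwall scheme of Proposition~\ref{prop:confronto_1} from the balls $B_n$ to the whole space, and you correctly identify that this ``relies precisely on the uniform boundedness of the off-diagonal coefficients $c^P_{kj}$.'' But that boundedness does \emph{not} follow from Hypotheses~\ref{hyp-base}. Hypothesis~\ref{hyp-base}(v) only bounds the row sums $\sum_h c^P_{kh}=c_{kk}+\sum_{h\neq k}|c_{kh}|$ from above; since $c_{kk}$ is allowed to tend to $-\infty$ as $|x|\to\infty$, the individual off-diagonal entries $c^P_{kj}=|c_{kj}|$, $j\neq k$, may be unbounded on $[s,T]\times\R^d$ (this happens in the first example in Section~6, where $c_{kh}(t,x)=\theta_{kh}(t)(1+|x|^2)^{\gamma_{kh}}$ with $\gamma_{kh}>0$). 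Consequently the constant $C$ in the Gronwall step of Proposition~\ref{prop:confronto_1}, namely $(e^{c_{T,k,n}}\vee 1)\sup_{j\neq k}\|c^P_{kj}\|_{C([s,T]\times\overline{B_n})}$, has no finite analogue on $\R^d$, and your iteration does not close. This is not a technical nuisance you can patch: the whole point of the paper is to handle exactly this situation, where the zero-order coupling is unbounded.

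The paper's uniqueness argument is designed to sidestep this. It sets $w_k=u_k^2$, computes $D_tw_k={\mathcal A}_{k,0}w_k-2\langle Q^k\nabla u_k,\nabla u_k\rangle+2u_k(Cu)_k$, and estimates $2u_k(Cu)_k\le Mw_k+\sum_j c^P_{kj}w_j$ by Young's inequality, yielding the differential inequality $D_t\ww-(\A^P+M\,{\rm Id})\ww\le\bm0$ on the whole of $(s,T]\times\R^d$. The maximum principle of \cite[Theorem~2.3]{AngLor20} then gives $\ww\le\bm0$ outright, with no need for boundedness of the $c^P_{kj}$: only the Lyapunov function of Hypothesis~\ref{hyp-base}(iii) and the row-sum bound enter. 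You should replace your uniqueness argument by this squaring-plus-maximum-principle device; your scalar evolution operator / variation-of-constants scheme, as written, has a genuine gap.
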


\begin{rmk}
{\rm Differently from \cite[Theorem 2.4]{AngLor20}, even if the off-diagonal entries of the matrix $C$ assume negative values, the constant $M$ in the above estimates does not depend on the lower bound of the off-diagonal entries of the matrix $C$, but only on the greatest upper bound of the sum of the elements on each row of $C^P$. This is a consequence of Proposition \ref{prop:confronto_1}, as we will see in the subsequent proof.}
\end{rmk}

\begin{proof}[Proof of Proposition $\ref{prop-2.6}$]
(i) Let us set $\vv(t,x):=\uu^P(t,x)-e^{M(t-s)}\max_{h=1,\ldots,m}\|f_h\|_\infty \1$ for each $(t,x)\in [s,T]\times \Rd$.
For every $k\in \{1,\ldots,m\}$,
$D_tv_k-(\A^P\vv)_k$ is non positive in $(s,T]\times \Rd$ and $\vv(s,\cdot)\leq \bm 0$ in $\Rd$. The maximum principle in \cite[Theorem 2.3]{AngLor20} (whose proof holds true also in our situation) implies that $u^P_k(t,x)\leq e^{M(t-s)} \max_{h=1,\ldots,m}\|f_h\|_\infty$ for every $(t,x)\in[s,T]\times \Rd$ and
$k\in\{1,\ldots,m\}$. Replacing $\uu^P$ with the function $-\uu^P$, we obtain the inequality $u^P_k(t,x)\geq -e^{M(t-s)} \max_{k=1,\ldots,m}\|f_k\|_\infty$ for $(t,x)\in[s,T]\times \Rd$ and $k\in\{1,\ldots,m\}$, and \eqref{tre} follows.

\medskip

(ii) We split the proof into two parts. In the first one we show that if there exists a classical solution which is bounded in each strip $[s,T]\times\Rd$, then it is unique. In the second part we construct a classical solution which satisfies \eqref{stima_uni_gen}.

{\it Uniqueness}. Let $\uu$ be a classical solution to \eqref{pioggia}
with $\f=\bm 0$, which is bounded in each strip $[s,T]\times\Rd$, and let us define the function $\ww=(w_1,\ldots,w_m)$ by setting
$w_k=u_k^2$ for $k=1,\ldots,m$.
Note that
\begin{align}
D_tw_k=2u_kD_tu_k={\mathcal A}_{k,0}w_k-2\langle Q_k\nabla u_k,\nabla u_k\rangle+2u_k(Cu)_k,\qquad\;\,k=1,\ldots,m,
\label{carboni}
\end{align}
in $(s,T]\times\Rd$, where ${\mathcal A}_{k,0}=\sum_{i,j=1}^dq_{ij}^kD^2_{ij}+\sum_{i=1}^d b_i^kD_i$.

Let us estimate the last term in the last side of \eqref{carboni}. For this purpose, we observe that
\begin{align*}
2u_k(t,x)\sum_{j=1}^m c_{kj}(t,x)u_j(t,x)\leq & 2c_{kk}(t,x)|u_k(t,x)|^2+2\sum_{j\neq k}|c_{kj}(t,x)||u_k(t,x)| |u_j(t,x)| \\
\leq & 2c_{kk}(t,x)|u_k(t,x)|^2+\sum_{j\neq k}|c_{kj}(t,x)|(|u_k(t,x)|^2+ |u_j(t,x)|^2) \\
= &  |u_k(t,x)|^2\bigg (c_{kk}(t,x)+\sum_{j\neq k}|c_{kj}(t,x)|\bigg ) \\
&+ \bigg (c_{kk}(t,x)|u_k(t,x)|^2+\sum_{j\neq k} |c_{kj}(t,x)||u_j(t,x)|^2 \bigg )\\
= & |u_k(t,x)|^2\sum_{j=1}^mc^P_{kj}(t,x) +\sum_{j=1}^mc^P_{kj}(t,x)|u_j(t,x)|^2 \\
\leq & Mw_k(t,x)+ \sum_{j=1}^m c^P_{kj}(t,x)w_j(t,x),
\end{align*}
which, combined with \eqref{carboni}, gives $D_t\ww-(\A^P+M{\rm Id})\ww \leq \bm 0$ in $(s,T]\times\Rd$. Again the maximum principle in \cite[Theorem 2.3]{AngLor20},
applied to the operator $D_t-(\A^P+M{\rm Id})$, implies that $\ww\leq \bm 0$, i.e., $\uu\equiv\bm 0$.

{\it Existence}.
We recall that for every $n\in\N$ the function $\G_n^{\mathcal N}(s,t)\f$ is the unique classical solution to the Cauchy problem with homogeneous Neumann boundary conditions
\begin{equation*}
\left\{
\begin{array}{lll}
D_t\uu(t,x)=(\A\uu)(t,x), & t\in(s,\infty), &x\in B_n, \vspace{1mm}\\
\displaystyle \frac{\partial \uu}{\partial \nu}(t,x)=\bm 0, & t\in(s,\infty), & x\in\partial B_n, \vspace{1mm} \\
\uu(s,x)=\f(x), && x \in B_n,
\end{array}
\right.
\end{equation*}
where $\nu$ is the outward unit normal to $B_n$. Arguing as in the proof of \cite[Theorem 2.4]{AngLor20} (see also Remark 2.5 therein and Proposition \ref{prop:app_stime_varie}$(i)$-$(ii)$ in the Appendix) it follows that, for every compact set $E\subset (s,\infty)\times \Rd$, the sequence $(\G_n^{\mathcal N}(s,t)\f)$ converges to a function $\uu$ in $C^{1,2}(E;\Rm)$, which is a classical solution to \eqref{pioggia}.

Proposition \ref{prop:confronto_1} shows that $|(\G_n^{\mathcal N}(\cdot,s)\f)_j|\leq (\G_n^{\mathcal N, P}(\cdot,s)|\f|)_j$ in $[s,\infty)\times\Rd$ for every $j\in\{1,\ldots,m\}$ and $n\in\N$. By letting $n$ tend to $\infty$ we deduce that $\G_n^{\mathcal N,P}(\cdot,s)|\f|$ converges to $\widetilde \uu^P$ in $C^{1,2}(E;\Rm)$ for every compact set $E\subset(s,\infty)\times \Rd$. From \eqref{tre} we get
\begin{align*}
\max_{k=1,\ldots,m}|u_k(t,x)|
= & \max_{k=1,\ldots,m}\lim_{n\rightarrow\infty} |((\G_n^{\mathcal N}(s,t)\f)(x))_k|
\leq \max_{k=1,\ldots,m}\lim_{n\rightarrow\infty}((\G_n^{\mathcal N, P}(\cdot,s)|\f|)(x))_k \\
=& \max_{k=1,\ldots,m}|\widetilde u_k^P(t,x)|\leq e^{M(t-s)}\max_{k=1,\ldots,m}\|f_k\|_\infty
\end{align*}
for every $(t,x)\in [s,T]\times \Rd$. This completes the proof.
\end{proof}

For further use, we consider the following results.

\begin{prop}
\label{pro:appr_sol_neumann}
In addition to Hypothesis $\ref{hyp-base}(i)$-$(ii)$, assume that the coefficients of the operator $\A$ belong to $C^{\alpha/2,3+\alpha}_{\rm loc}(I\times\Rd)$. Then, the following properties are satisfied for every $n\in\N$.
\begin{enumerate}[\rm (i)]
\item
for every $\f\in C(\overline{B_n};\R^m)$, the function $D^{\beta}_x\G^{\mathcal N}_n(\cdot,s)\f$ belongs to $C_{\rm loc}^{1+\alpha/2,2+\alpha}((s,T)\times B_n;\Rm)$ for every $\beta\in(\N\cup\{0\})^d$, with $|\beta|\le 3$, and $n\in\N$;
\item
the function $(t,x)\mapsto (t-s)^{(3-j)/2}|(D^3_x\G^{\mathcal N}_n(\cdot,s)\f)(x)|$ is continuous in $[s,T]\times B_n$ for every $n\in\N$, $\f\in C^j(\overline{B_n};\R^m)$ and $j=0,1,2,3$, where we extend it at $t=s$ in the trivial way;
\item
for each $s<t_2<t_1<T$, each pair of bounded sets $\Omega_1\Subset \Omega_2\Subset B_R$ and $n>R$ there exists a positive constant $C$, independent of $n$ and $\f\in C(\overline{B_n})$, such that $\|\G^{\mathcal N}_n(t,s)\f\|_{C^{1+\alpha/2,3+\alpha}((t_1,T)\times \Omega_1;\Rm)}\leq C\|\G^{\mathcal N}_n(t,s)\f\|_{L^\infty((t_2,T)\times \Omega_2;\Rm)}$. In particular, $\uu\in C^{1+\alpha/2,3+\alpha}_{\rm loc}((s,T)\times \Rd;\Rm)$ and $\G^{\mathcal N}_n(\cdot,s)\f$ converges to $\uu$ in $C^{1+\alpha/2,3+\alpha}(E)$ as $n$ tends to $\infty$ for every compact set $E\subset(s,T)\times \Rd$, where $\uu$ is the solution to the Cauchy problem \eqref{pioggia} provided by Proposition $\ref{prop-2.6}$.
\end{enumerate}
\end{prop}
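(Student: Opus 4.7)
The plan is to reduce all three statements to interior parabolic Schauder theory applied to $\uu_n := \G^{\mathcal N}_n(\cdot,s)\f$, exploiting the extra $C^{\alpha/2,3+\alpha}_{\rm loc}$-regularity now assumed on the coefficients of $\A$.

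For (i), I would argue by induction on $|\beta|\in\{0,1,2,3\}$. The case $|\beta|=0$ is the standard interior Schauder estimate for weakly coupled parabolic systems with $C^{\alpha/2,\alpha}_{\rm loc}$ coefficients; the homogeneous Neumann condition at $\partial B_n$ plays no role in the interior. For the inductive step, formal differentiation of the system in $x^\beta$ shows that $\vv := D_x^\beta\uu_n$ solves, on $(s,T)\times B_n$, a weakly coupled parabolic system $D_t\vv - \A(t)\vv = \bm \Phi_\beta$, where $\bm \Phi_\beta$ is a finite sum of products of spatial derivatives of the coefficients of $\A$ of order at most $|\beta|$ with spatial derivatives of $\uu_n$ of order at most $|\beta|+1$. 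By the inductive hypothesis and the assumed regularity of the coefficients, $\bm \Phi_\beta\in C^{\alpha/2,\alpha}_{\rm loc}((s,T)\times B_n;\Rm)$, and another application of interior Schauder regularity upgrades $\vv$ to $C^{1+\alpha/2,2+\alpha}_{\rm loc}$, closing the induction.

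For (ii), continuity of $(t,x)\mapsto (t-s)^{(3-j)/2}|D_x^3\uu_n(t,x)|$ on $(s,T]\times B_n$ is immediate from (i); only the behavior as $t\downarrow s$ is delicate. For $j\le 2$ I would combine the smoothing estimates of Proposition \ref{prop:app_stime_varie} with interpolation to produce a bound of the form $\|D^3_x\uu_n(t,\cdot)\|_\infty\le C(t-s)^{-(3-j)/2}\|\f\|_{C^j(\overline{B_n};\Rm)}$, so that the weight $(t-s)^{(3-j)/2}$ absorbs the blow-up and forces the product to vanish uniformly as $t\downarrow s$. For $j=3$ the weight is trivial, and I would argue by density: approximate $\f$ in $C^3(\overline{B_n};\Rm)$ by a sequence $(\f_k)$ of smoother data (in $C^{3+\alpha}$ and compatible with the Neumann condition on $\partial B_n$) for which $\G^{\mathcal N}_n(\cdot,s)\f_k$ belongs to $C^{1+\alpha/2,3+\alpha}([s,T]\times\overline{B_n};\Rm)$ (so that $D^3_x\G^{\mathcal N}_n(\cdot,s)\f_k$ extends continuously up to $t=s$ with trace $D^3\f_k$), and then use the $j=0$ smoothing estimate applied to $\f-\f_k$ together with linearity of the semigroup to pass to the limit, concluding that $D^3_x\uu_n$ itself extends continuously to $t=s$ with trace $D^3\f$.

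For (iii), the uniform-in-$n$ interior Schauder estimate for spatial derivatives up to order three follows from \cite[Section IV.2, Theorem 3.4]{Ei69} (with the inductive step of (i) carried out in fixed nested domains $\Omega_1\Subset\Omega_2$): the constants depend only on $\mu_0$, on the $C^{\alpha/2,1+\alpha}$-norms of the coefficients of $\A$ on $\Omega_2$ (which are bounded independently of $n>R$), and on $\mathrm{dist}(\Omega_1,\partial\Omega_2)$. Combined with the uniform $L^\infty$-bound on $\uu_n$ from Proposition \ref{prop-2.6} and the diagonal Ascoli-Arzelà argument already used in its proof, this produces $\G^{\mathcal N}_n(\cdot,s)\f \to \uu$ in $C^{1+\alpha/2,3+\alpha}(E;\Rm)$ for every compact $E\subset(s,T)\times\Rd$. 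The main obstacle is the endpoint $j=3$ in (ii): because a generic $\f\in C^3(\overline{B_n};\Rm)$ need not be compatible with the Neumann condition on $\partial B_n$, the continuous extension of $D^3_x\uu_n$ to $t=s$ has to be obtained by the approximation argument above rather than from a single off-the-shelf Schauder estimate.
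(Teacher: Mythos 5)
The strategy for (i) and (iii) is essentially the paper's: a bootstrap obtained by differentiating the equation and applying interior parabolic Schauder estimates on nested domains (the paper reaches (i) by citing Eidel'man and (iii) via \cite[Theorem~7.2]{AngLor20}, which is the interior Schauder estimate you need rather than \cite[Section~IV.2, Theorem~3.4]{Ei69}, which only concerns solvability of the Neumann problem). The genuine problem lies in your treatment of (ii), on two counts.

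First, the smoothing estimate you want, namely
$\|D^3_x\G^{\mathcal N}_n(t,s)\g\|_{L^\infty(K)}\le C(t-s)^{-(3-j)/2}\|\g\|_{C^j(\overline B_n;\Rm)}$
for compact $K\subset B_n$, does not follow from Proposition~\ref{prop:app_stime_varie} ``combined with interpolation'': that proposition only bounds $\|\uu(t,\cdot)\|_{C^j_b}$ for $j=1,2$, and interpolation cannot produce a bound on a higher-order derivative from bounds on lower-order ones. Producing the third-order bound requires a bootstrap in the spirit of your part (iii), i.e.\ differentiating the equation once and applying the weighted gradient estimates of Proposition~\ref{prop:app_stime_varie} to the resulting system, together with the fact that the right-hand side of that system has $C^{\alpha/2,\alpha}$ norm controlled (locally) by $\|\uu_n\|_{L^\infty}$, which in turn uses (iii) itself. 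The paper avoids all this by citing \cite[Theorem~2.3]{BerLor05} and \cite[Chapter~IV, Theorem~5.3]{LadSolUra68Lin}; if you want a self-contained argument you must actually carry out this bootstrap rather than invoke interpolation.

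Second, and more seriously, your argument that the weighted quantity extends continuously to $t=s$ does not work as written. For $j\le 2$ you say that the weight ``absorbs the blow-up and forces the product to vanish''; but the estimate only gives $(t-s)^{(3-j)/2}\|D^3_x\uu_n(t,\cdot)\|_{L^\infty(K)}\le C\|\f\|_{C^j}$, i.e.\ boundedness, which does not imply that the product tends to zero along $t\downarrow s$. A density argument is required for $j\le 2$ as well, not only for $j=3$. And for $j=3$ your density argument uses the wrong estimate: applying the $j=0$ smoothing bound to $\f-\f_k$ yields $\|D^3_x\G^{\mathcal N}_n(t,s)(\f-\f_k)\|_\infty\le C(t-s)^{-3/2}\|\f-\f_k\|_\infty$, whose right-hand side blows up as $t\to s$ no matter how small $\|\f-\f_k\|_\infty$ is, so you cannot pass to the limit uniformly up to $t=s$. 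What is needed is the estimate $\|D^3_x\G^{\mathcal N}_n(t,s)(\f-\f_k)\|_{L^\infty(K)}\le C\|\f-\f_k\|_{C^3(\overline B_n;\Rm)}$, uniform in $t$ up to $s$, and similarly for $j<3$ the $j$-dependent estimate $(t-s)^{(3-j)/2}\|D^3_x\G^{\mathcal N}_n(t,s)(\f-\f_k)\|_{L^\infty(K)}\le C\|\f-\f_k\|_{C^j}$ applied to a $C^{3}$ approximating sequence (for which the weighted product visibly tends to zero since $D^3_x\G^{\mathcal N}_n(\cdot,s)\f_k$ stays bounded near $t=s$). With the correct $j$-dependent estimates in hand, the density argument is the right mechanism for every $j$; with the $j=0$ estimate it is not sufficient for any $j$.
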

\begin{proof}
(i) This property follows from \cite[Section II.4, Theorem 3.2]{Ei69}.

(ii) This property follows arguing as in \cite[Theorem 2.3]{BerLor05} and taking \cite[Chapter IV, Theorem 5.3]{LadSolUra68Lin} into account.

(iii) To simplify the notation, we set $\uu_n=G_n^{\mathcal N}(\cdot,s)\f$. 
Let us fix $s<t_2<t_3<t_1<T$ and three bounded open sets $\Omega_1\Subset\Omega_3\Subset\Omega_2$. Further, fix $R>0$ such that $\Omega_2\subset B_R$ and $n>R$. From property (i) we know that $D_{\ell}\uu_n\in C^{1+\alpha/2,2+\alpha}((s,T)\times B_n;\Rm)$ for every $\ell=1,\ldots,d$. Differentiating along the direction $e_\ell$, $\ell=1,\ldots,d$, the equation $D_t\uu_n=\A\uu_n$, we get
$D_tD_\ell\uu_n=\A D_\ell\uu_n+\g$ in $(s,T)\times B_n$, where
\begin{align*}
g_k=\sum_{i,j=1}^dD_{\ell}q_{ij}^kD_{ij}u_k+\sum_{i=1}^dD_{\ell}b_i^kD_iu_k+\sum_{j=1}^m D_{\ell}c_{kj}u_j,\qquad\;\,k=1,\ldots,m.
\end{align*}
By applying \cite[Theorem 7.2]{AngLor20} we infer that there exists a positive constant $C_1$ such that
\begin{align*}
\|D_{\ell}\uu_n\|_{C^{1+\alpha/2,2+\alpha}((t_1,T)\times \Omega_1;\Rm)}\leq C_1\big(\|D_{\ell}\uu_n\|_{L^\infty((t_3,T)\times\Omega_3;\Rm)}+\|\g\|_{C^{\alpha/2,\alpha}(t_3,T)\times \Omega_3;\Rm)}\big)
\end{align*}
and from the definition of $\g$, the assumptions on the coefficients of the operator $\A$ and applying once again \cite[Theorem 7.2]{AngLor20}, we infer that
\begin{align*}
\|D_{\ell}\uu_n\|_{L^\infty((t_3,T)\times\Omega_3;\Rm)}+\|\g\|_{C^{\alpha/2,\alpha}(t_3,T)\times \Omega_3;\Rm)}\leq C_2\|\uu_n\|_{L^\infty({t_2},T]\times \Omega_2;\Rm)}
\end{align*}
for some positive constant $C_2$, which is independent of $n$ as the constant $C_1$.
It follows that there exists a positive constant $C$ such that
\begin{align*}
\|\uu_n\|_{C^{1+\alpha/2,3+\alpha}((t_1,T)\times \Omega_1;\Rm)}\leq C_3 \|\uu_n\|_{L^\infty(({t_2},T]\times \Omega_2;\Rm)}
\end{align*}
for some constant $C_3>0$, independent of $n$. Since $\uu_n$ converges to $\uu$ in $C^{1+\alpha/2,2+\alpha}(E;\Rm)$ as $n$ tends to $\infty$ for each compact set $E\subset (s,T]\times \Rd$,
writing this estimate with $\uu_n$ being replaced by $\uu_n-\uu_m$, it follows that $(\uu_n)$ is a Cauchy sequence in $C^{1+\alpha/2,3+\alpha}((t_1,T)\times \Omega_1;\Rm)$ and the arbitrariness of $\Omega_1$ and $t_1$
implies that $\uu\in C_{\rm loc}^{1+\alpha/2,3+\alpha}((s,T)\times \Rd;\Rm)$ and $\uu_n$ converges to $\uu$ in $C^{1+\alpha/2,3+\alpha}(E;\Rm)$ as $n$ tends to $\infty$ for every compact set $E$ as above.
\end{proof}

In view of Proposition \ref{prop-2.6}, we can define an evolution operator $\G(t,s)$ on $C_b(\R^d;\R^m)$ by setting, for every $\f\in C_b(\R^d;\R^m)$, $\G(t,s)\f:=\uu(t,\cdot)$ if $t>s$ and $\G(s,s)\f:=\f$, where $\uu$ is the unique classical solution to \eqref{pioggia} with $\uu(s,\cdot)=\f$, which is bounded in each strip $[s,T]\times\Rd$. The uniqueness part of Proposition \ref{prop-2.6}  implies that $\G(t,r)\G(r,s)\f=\G(t,s)\f$ for every $t\geq r\geq s\in I$ and $\f\in C_b(\R^d;\R^m)$.
Similarly, we define the evolution operator $\G^P(t,s)$ on $C_b(\R^d;\R^m)$ associated with the parabolic Cauchy problem \eqref{cauchy_prob_pos}. In the following proposition, we prove some basic properties of the evolution operator $\G(t,s)$.
\begin{prop}
\label{prop:conv_loc_unif}
Fix $s\in I$ and a bounded sequence $(\f_n)\subset C_b(\Rd;\Rm)$. Then, the following properties are satisfied.
\begin{enumerate}[\rm (i)]
\item
If $\f_n$ pointwise converges to $\f\in C_b(\Rd;\Rm)$, then $\G(\cdot,s)\f_n$ converges to $\G(\cdot,s)\f$ in $C^{1,2}(E;\Rm)$ for every compact set $E\subset (s,\infty)\times \Rd$;
\item
If $\f_n$ locally uniformly converges to $\f\in C_b(\Rd;\Rm)$, then $\G(\cdot,s)\f_n$ converges to $\G(\cdot,s)\f$ locally uniformly in $[s,\infty)\times \Rd$.
\end{enumerate}
\end{prop}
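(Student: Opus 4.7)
The plan is a compactness-and-identification argument for (i), followed by a comparison-based boundary-layer analysis near $t=s$ for (ii).

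Set $\uu_n:=\G(\cdot,s)\f_n$ and $\uu:=\G(\cdot,s)\f$. Proposition \ref{prop-2.6}(ii) immediately gives the uniform bound $\sup_n\|\uu_n\|_{L^\infty([s,T]\times\Rd;\Rm)}\leq e^{M_{[s,T]}(T-s)}\sup_n\|\f_n\|_\infty<\infty$ for every $T>s$. Each $\uu_n$ solves $D_t\uu_n=\A\uu_n$ on $(s,\infty)\times\Rd$ with coefficients in $C^{\alpha/2,\alpha}_{\rm loc}(I\times\Rd)$, and treating the coupling $C\uu_n$ as a Hölder-continuous source of uniformly bounded Hölder norm, interior parabolic Schauder estimates yield uniform-in-$n$ bounds of $(\uu_n)$ in $C^{1+\alpha/2,2+\alpha}$ on compact subsets of $(s,\infty)\times\Rd$. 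By Arzelà--Ascoli and a diagonal extraction, every subsequence of $(\uu_n)$ has a further subsequence converging in $C^{1,2}(E;\Rm)$ for every compact $E\subset(s,\infty)\times\Rd$ to a function $\bm w$ that solves $D_t\bm w=\A\bm w$ on $(s,\infty)\times\Rd$ and is bounded on each strip $[s',T]\times\Rd$ with $s<s'<T$.

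To identify $\bm w$ with $\uu$, I exploit the Neumann approximations $\G_k^{\mathcal N}(t,s)$. For every fixed $k$, dominated convergence upgrades the pointwise convergence $\f_n\to\f$ to convergence in $L^p(B_k;\Rm)$ for every $p\in[1,\infty)$, and standard parabolic $L^p$-theory on the bounded domain $B_k$ with homogeneous Neumann conditions combined with interior Schauder regularity yields $\G_k^{\mathcal N}(\cdot,s)\f_n\to\G_k^{\mathcal N}(\cdot,s)\f$ in $C^{1,2}(E';\Rm)$ on every compact $E'\subset(s,\infty)\times B_k$. On the other hand, the existence construction in Proposition \ref{prop-2.6}(ii) gives $\G_k^{\mathcal N}(\cdot,s)\g\to\G(\cdot,s)\g$ in $C^{1,2}$ on compacts as $k\to\infty$, for every fixed $\g\in C_b(\Rd;\Rm)$. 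The Neumann-approximation residual $\uu_n-\G_k^{\mathcal N}(\cdot,s)\f_n$ is uniformly bounded in $n$ and $k$, satisfies $D_t\bm v=\A\bm v$ on compact subsets of $(s,\infty)\times B_{k_0}$ for any fixed $k_0<k$, and thus enjoys the same uniform $C^{1+\alpha/2,2+\alpha}$ estimates. Any subsequential limit as $k\to\infty$ (with $n$ varying) is bounded on strips, solves the system globally and, via the uniqueness statement of Proposition \ref{prop-2.6}(ii), must vanish. A triangle-inequality argument combining the three ingredients then forces $\bm w=\uu$, and since every subsequence of $(\uu_n)$ has the same limit, the whole sequence converges.

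For (ii), fix a compact $K\subset[s,T]\times\overline{B_R}$. Part (i) already handles $K\cap\{t\geq s+\varepsilon\}$ for every $\varepsilon>0$, so the point is to control $|\uu_n-\uu|$ uniformly on $[s,s+\varepsilon]\times\overline{B_R}$. Linearity and Proposition \ref{prop-2.6}(ii) give $|\uu_n-\uu|_j\leq(\G^P(t,s)|\f_n-\f|)_j$. Given $\eta>0$, decompose $|\f_n-\f|=\bm g_n^{(1)}+\bm g_n^{(2)}$, where $\bm g_n^{(1)}$ is supported in $\overline{B_{R'}}$ for a sufficiently large $R'>R$ and $\bm g_n^{(2)}$ vanishes on $B_{R'}$. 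Local uniform convergence of $\f_n\to\f$ forces $\|\bm g_n^{(1)}\|_\infty\to 0$, so estimate \eqref{tre} bounds the first contribution by $e^{M_{[s,T]}(T-s)}\|\bm g_n^{(1)}\|_\infty$, which becomes smaller than $\eta/2$ for $n$ large. For $\bm g_n^{(2)}$, the Lyapunov function $\boldsymbol\varphi_{[s,T]}$ of Hypothesis \ref{hyp-base}(iii) produces a supersolution of the form $c\,e^{\lambda_{[s,T]}(t-s)}\boldsymbol\varphi_{[s,T]}$ of $D_t\bm v=\A^P\bm v$ in $[s,T]\times\Rd\setminus B_{R'}$, with $c$ chosen in terms of $\inf_{\partial B_{R'}}\boldsymbol\varphi_{[s,T]}$ and $\sup_n\|\f_n-\f\|_\infty$; by the maximum principle of \cite[Theorem 2.3]{AngLor20}, this supersolution dominates $\G^P(\cdot,s)\bm g_n^{(2)}$ on $[s,T]\times\overline{B_R}$, and its supremum over $\overline{B_R}$ can be made smaller than $\eta/2$ by choosing $R'$ large, uniformly in $n$ and $t\in[s,T]$.

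The main obstacle is the identification step in part (i): only pointwise convergence of the initial data is available, so the passage to the limit in the Cauchy problem cannot be done directly and must be effected indirectly through the Neumann approximations, where dominated convergence converts pointwise into $L^p$-convergence on bounded domains. The ensuing interchange of the two limits (in the initial-data index $n$ and the ball-radius index $k$) is handled by an auxiliary compactness-and-uniqueness argument parallel to the main one. The localization estimate used in (ii) is technically standard for parabolic equations with unbounded coefficients, but crucially rests on Hypothesis \ref{hyp-base}(iii) supplying a Lyapunov-type supersolution.
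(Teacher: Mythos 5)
Your proof proposal diverges noticeably from the paper's one-line argument. The paper simply observes that, by linearity and the comparison established via the Neumann approximations in Proposition \ref{prop:confronto_1},
\[
|(\G(t,s)\f_n)(x)-(\G(t,s)\f)(x)|\le (\G^P(t,s)|\f_n-\f|)(x),\qquad t>s,\ x\in\Rd,
\]
and then applies the analogous convergence theorem for $\G^P$ from [AngLor20, Theorem 2.6] (together with interior Schauder estimates to upgrade from uniform to $C^{1,2}$ convergence in part (i)). You, by contrast, do not invoke this pointwise domination in part (i) at all, opting instead for a standalone compactness-and-identification argument; you do use the comparison bound in part (ii), combined with a self-contained Lyapunov-barrier argument that essentially reproves the relevant ingredient of [AngLor20, Theorem 2.6].

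Part (ii) of your proposal is sound: dominating $|\uu_n-\uu|$ by $\G^P(\cdot,s)|\f_n-\f|$, decomposing $|\f_n-\f|$ into a compactly supported piece plus a far-field piece, and controlling the latter by the supersolution $c\,e^{\lambda_{[s,T]}(t-s)}\boldsymbol\varphi_{[s,T]}$ is a correct and standard localization argument; the only cosmetic imprecision is that the constant $c$ should be chosen in terms of $\min_k\inf_{|y|\ge R'}\varphi_{[s,T],k}(y)$ (not $\inf_{\partial B_{R'}}$) and the decomposition should use a smooth cutoff so that both pieces remain in $C_b(\Rd;\Rm)$.

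Part (i), however, contains a genuine gap in the identification step. You extract a subsequential limit of the Neumann residual $\uu_n-\G_k^{\mathcal N}(\cdot,s)\f_n$ via interior parabolic Schauder estimates, assert that this limit is ``bounded on strips, solves the system globally'' and conclude by the uniqueness clause of Proposition \ref{prop-2.6}(ii) that it vanishes. But that uniqueness statement concerns \emph{classical} solutions, i.e.\ functions belonging to $C([s,\infty)\times\Rd;\Rm)$ with the prescribed datum at $t=s$. The interior Schauder estimates you invoke only control compact subsets of $(s,\infty)\times\Rd$ and give no information about the behaviour of a subsequential limit as $t\to s^+$; the limit need not attach continuously to zero initial data, and in fact it is exactly this kind of boundary-layer control that the comparison inequality supplies. (Under mere pointwise convergence of $\f_n$ one cannot expect $\uu_n\to\uu$ uniformly down to $t=s$, which is precisely why the statement of part (i) only asserts convergence on compacts of the \emph{open} strip.) Moreover, the double-index limit ``as $k\to\infty$ with $n$ varying'' is not justified as stated: passing the triangle inequality requires that the Neumann approximation $\G_k^{\mathcal N}(\cdot,s)\g\to\G(\cdot,s)\g$ be uniform over $\g$ in the bounded set $\{\f_n\}$, which is a nontrivial assertion you have not established. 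Both gaps close readily if you instead use the comparison $|\G(t,s)(\f_n-\f)|\le\G^P(t,s)|\f_n-\f|$, either citing [AngLor20, Theorem 2.6] as the paper does, or adapting the Lyapunov-barrier argument of your part (ii) to control the residual $\uu_n-\G_k^{\mathcal N}(\cdot,s)\f_n$ uniformly in $n$ on $[s,T]\times B_R$.
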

\begin{proof}
The proof follows from \cite[Theorem 2.6]{AngLor20} recalling that
\begin{align*}
|(\G(t,s)\f_n)(x)-(\G(t,s)\f)(x)|\le (\G^P(t,s)|\f_n-\f|)(x), \qquad\;\, t>s, \;\, x\in\Rd.\hskip 3cm\qedhere
\end{align*}
\end{proof}

\begin{rmk}
\label{rmk-claudio}
{\rm We stress that all the results in this section can be extended to the autonomous case when the nonautonomous operator $\A$ is replaced with the autonomous operator $\A(\overline t)$, i.e., when $\overline t\in I$ is frozen.  In particular, for every $\overline t\in I$ we can introduce in the natural way the semigroups $\T_{\overline t}(\tau)$ and $\T_{\overline t}^P(\tau)$, associated, in $C_b(\Rd;\Rm)$, to the operators $\A(\overline t)$ and $\A^P(\overline t)$, respectively. Moreover,
\begin{align}
&\max\{\|\T_{\overline{t}}(\tau)\|_{L(C_b(\Rd;\Rm))},\|\T_{\overline{t}}^P(\tau)\|_{L(C_b(\Rd;\Rm))}\}
\le e^{M_{\overline{t}}\tau}\max_{k=1,\ldots,m}\|f_k\|_{\infty}
\label{stima_uni_gen_auto} 
\end{align}
for every $\f\in C_b(\Rd;\Rm)$ and $\tau>0$, where the constant $M_{\overline{t}}$ is defined by \eqref{def_M_T}, with $J=\{\overline t\}$.}
\end{rmk}

\section{Uniform estimates of the spatial derivatives}
\label{section:grad_est}
Throughout this section, we fix $T>s\in I$. 
We introduce the nonautonomous operator $\widetilde{\A}^P$, defined, componentwise on smooth functions $\f:\Rd\to\R^m$ by
\begin{align}
\label{operatore_A_tilde}
(\widetilde{\A}^P\f)_k={\rm Tr}(Q^kD^2f_k)+\langle
b^k,\nabla f_k\rangle+\widetilde C^P\f)_k
\end{align}
for every $t\in[s,T]$ and $k=1,\ldots,m$, where
$\widetilde c_{kj}^P(t,x):= c_{kj}^P(t,\cdot)+m^{-1}(1+|M_k(t,\cdot)|)$ for every $(t,x)\in [s,T]\times\Rd$ and $k,j=1,\ldots,m$. Here, $M_k$ is the function in \eqref{def_M_T}.

\begin{rmk}
{\rm We notice that
$\sum_{j=1}^m\widetilde c_{kj}^P(t,x)
=1+2(M_k(t,x))^+$ for every $(t,x)\in [s,T]\times\Rd$,
where $(M_k(t,x))^+$ denotes the positive part of $M_k(t,x)$ (see \eqref{def_M_T}). Hence, the potential matrix of the operator $\widetilde {\A}^P$ satisfies Hypothesis {\ref{hyp-base}$(v)$}: indeed,
\begin{align}
\label{stima_somma_tilde_c}
\max _{k=1,\ldots,m}\sup_{(t,x)\in[s,T]\times \Rd}\sum_{j=1}^m\widetilde c_{kj}^P(t,x)\leq 1+2M^+.
\end{align}
}
\end{rmk}

\begin{hyp}
\label{hyp:smoothness_coeff}
\begin{enumerate}[\rm (i)]
\item
The coefficients of the operator $\A$ belong to $C^{\alpha/2,3+\alpha}_{\rm loc}(I\times\Rd)$;
\item
there exists a function $r^k:[s,T]\times\Rd\to\R$ such that $\langle ({\rm J}_x\bm b^k)\xi,\xi\rangle\le r^k|\xi|^2$ on $[s,T]\times\Rd$ for every $\xi\in\Rd$;
\item
there exists a positive constant $C$ such that
\begin{align*}
\max\{|(Q^{k}(t,x)x)_i|, |{\rm Tr}(Q^{k}(t,x))|, \langle \bm b^k(t,x),x\rangle\}\leq C(1+|x|^2)\mu^k(t,x)
\end{align*}
for every $i=1,\ldots,d$, $k=1,\ldots,m$ and $(t,x)\in [s,T]\times \Rd$;
\item
there exist positive functions $b^{k,2}$, $b^{k,3}$ and a positive constant $L_k$ such that
\begin{align*}
|D^h_{x}b^k_j|\le b^{k,h},\qquad\;\,|\nabla_{x} q^k_{ij}|\le L_k\mu^k,\qquad\;\,
|D^h_{x}q^k_{ij}|\le L_k\mu^k
\end{align*}
on $(s,T]\times\Rd$ for every $i,j=1,\ldots,d$, $h=2,3$, and
$r^k+B_{k,2}b^{k,2}+B_{k,3}b^{k,3}\le {\mathscr M}_k\mu^k$ on $(s,T]\times\Rd$ for every $k=1,\ldots,m$
and some positive constants $B_{k,2}$, $B_{k,3}$ and ${\mathscr M}_k$;
\item
there exist positive functions $c^{k,h}$ such that $|D^j_xc_{k\ell}|\le c^{k,j}$
for every $k,\ell=1,\ldots,m$ and $h=1,\ldots,3$ and  $c^{k,h}\le \overline{C}(1+|M_k|)$ for every $h,k$ and some positive constant $\overline C$.
\end{enumerate}
\end{hyp}

The following one is the main result of this section.

\begin{thm}
\label{thm-3.4}
Let Hypotheses $\ref{hyp-base}$ and $\ref{hyp:smoothness_coeff}$ be fulfilled. Then,
\begin{enumerate}[\rm (i)]
\item for each $h,k=0,\ldots,3$, with $h\leq k$ and $T>s\in I$, there exists a constant $C=C_{h,k}(s,T)>0$ such that
\begin{align}
\label{stima_der_op_ev_totale}
\|\G(t,s)\f\|_{C_b^k(\Rd;\Rm)}\leq \frac{Ce^{\overline M (t-s)}}{(t-s)^{(k-h)/2}}\|\f\|_{C^h_b(\Rd;\Rm)}
\end{align}
for every $\f\in C^h_b(\Rd;\Rm)$ and $t\in(s,T]$, where $\overline M:=2^{-1}(1+M+2M^+)$;
\item 
for every $\mathcal T>0$, $\overline t\in [s,T]$ and $h,k=0,\ldots,3$, with $h\leq k$, there exists a constant $C=C_{h,k}(\overline t,\mathcal T)$, which is uniform with respect to $\overline t$ belonging to bounded sets, such that
\begin{align}
\label{stima_der_smgr_totale}
\|\T_{\overline t}(\tau)\f\|_{C_b^k(\Rd;\Rm)}\leq \frac{Ce^{\overline M \tau}}{\tau^{(k-h)/2}}\|\f\|_{C^h_b(\Rd;\Rm)}, 
\end{align}
for every $\f\in C^h_b(\Rd;\Rm)$ and $\tau\in (0,\mathcal T]$,
where in this case $\overline M:=2^{-1}(1+M_{\{\overline t\}}+2(M_{\{\overline t\}})^+)$.
\end{enumerate}
\end{thm}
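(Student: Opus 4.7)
The overall strategy is to establish the estimates at the level of the Neumann approximations $\G^{\mathcal N}_n(\cdot,s)$ introduced in Section \ref{section:preliminaries}, uniformly in $n\in\N$, by a Bernstein-type argument tailored to the auxiliary operator $\widetilde{\A}^P$ defined in \eqref{operatore_A_tilde}, and then to pass to the limit $n\to\infty$ using the $C^{1+\alpha/2,3+\alpha}_{\rm loc}$--convergence provided by Proposition \ref{pro:appr_sol_neumann}(iii). Statement (ii) will follow from (i) by freezing $t=\overline t$ throughout and invoking the autonomous counterparts summarized in Remark \ref{rmk-claudio}.

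Fix $0\le h\le k\le 3$ and set $\uu_n:=\G^{\mathcal N}_n(\cdot,s)\f$. The key step is to construct, componentwise, a quadratic Bernstein functional
\begin{align*}
v_{n,k}(t,x):=\sum_{j=h}^{k}a_j(t-s)^{j-h}|D^j_x u_{n,k}(t,x)|^2,\qquad k=1,\ldots,m,
\end{align*}
with positive constants $a_j=a_j(h,k)$ to be selected, and to show that $\bm v_n=(v_{n,1},\ldots,v_{n,m})$ satisfies a differential inequality of the form
$D_t\bm v_n\le (\widetilde{\A}^P+M\mathrm{Id})\bm v_n$ on $(s,T]\times B_n$, where $M=M_{[s,T]}$ is the constant in \eqref{def_M_T}. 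Differentiating the equation $D_t\uu_n=\A\uu_n$ up to $k$ times in space, multiplying each differentiated equation by the corresponding derivative of $u_{n,k}$, and summing, one obtains $D_t v_{n,k}-(\widetilde{\A}^P\bm v_n)_k$ as a combination of: negative square-gradient contributions $-2(t-s)^{j-h}\langle Q^k\nabla D^j_x u_{n,k},\nabla D^j_x u_{n,k}\rangle$ from the classical Bernstein trick; commutator remainders involving derivatives of $q^k_{ij}$, $b^k_j$, $c_{k\ell}$ which by Hypotheses \ref{hyp:smoothness_coeff}(ii)--(v) are dominated either by $\mu^k$ times squares of derivatives of $\uu_n$ (to be absorbed into the gradient-square terms by choosing $a_j$ inductively from $j=k$ downwards) or by $(1+|M_k|)$ times such squares (absorbed by the additive term $m^{-1}(1+|M_k|)$ built into $\widetilde c^P_{kj}$); and a cross-coupling term $2D^j_x u_{n,k}\cdot D^j_x(C\uu_n)_k$, which by the arithmetic-geometric bound used in the proof of Proposition \ref{prop-2.6}(ii) equals $(\widetilde C^P\bm v_n)_k + Mv_{n,k}$ up to contributions already accounted for.

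Once the differential inequality is established and the Neumann condition on $\uu_n$ is checked to yield $\partial_\nu \bm v_n\le \bm 0$ on $\partial B_n$ (a point that requires a cutoff argument localizing the Bernstein functional on $B_{n-1}$ before differentiating, and then exploiting Proposition \ref{pro:appr_sol_neumann}(iii) to handle the residual terms), the row-sum bound \eqref{stima_somma_tilde_c} together with the maximum principle for $\widetilde{\A}^P+M\mathrm{Id}$ as in Proposition \ref{prop-2.6}(i) gives
\begin{align*}
\max_{k=1,\ldots,m}\|v_{n,k}(t,\cdot)\|_{L^\infty(B_n)}\le e^{(1+2M^++M)(t-s)}\max_{k=1,\ldots,m}\|v_{n,k}(s,\cdot)\|_{L^\infty(B_n)},
\end{align*}
uniformly in $n$. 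Since $v_{n,k}(s,\cdot)\le C\|\f\|_{C^h_b(\Rd;\Rm)}^2$ by construction, and $v_{n,k}(t,\cdot)\ge a_k(t-s)^{k-h}|D^k_x u_{n,k}(t,\cdot)|^2$, taking square roots and summing over $k$ yields \eqref{stima_der_op_ev_totale} for $\G^{\mathcal N}_n$ with constant $\overline M=2^{-1}(1+M+2M^+)$. The lower-order estimates arise from the same $v_{n,k}$ restricted to the corresponding sub-range of $j$. Passing to the limit via Proposition \ref{pro:appr_sol_neumann}(iii) yields (i), and (ii) is obtained in exactly the same way with $t=\overline t$ frozen.

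The main obstacle is the algebraic bookkeeping at the third-order level: choosing explicit positive constants $a_j$ so that every commutator remainder is absorbed simultaneously, while keeping in mind that $\A$ has diffusion part varying with the equation index $k$, so that the cancellations of the classical scalar Bernstein argument have to be reproduced row by row without interfering with the coupling mediated by $\widetilde C^P$. A secondary, more routine difficulty is the compatibility of the Bernstein functional with the Neumann boundary condition at the orders $j\ge 1$, which is handled by the cutoff localization mentioned above combined with the interior Schauder estimates encoded in Proposition \ref{pro:appr_sol_neumann}(iii).
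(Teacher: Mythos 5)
Your proposal captures the correct overall architecture: a Bernstein functional built from the Neumann approximations, a differential inequality for the auxiliary operator $\widetilde{\A}^P+M\,\mathrm{Id}$, a maximum principle, and passage to the limit via Proposition \ref{pro:appr_sol_neumann}(iii). However there are two concrete gaps that make the proof, as written, fail.

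First, the boundary term is not handled by your "cutoff argument localizing on $B_{n-1}$"; the paper's actual device is to build the cutoff \emph{into the weights of the Bernstein functional itself}. In the paper's functional, the second-derivative square $|D^2_x\widetilde u_k|^2$ is multiplied by $\vartheta^2$ and the third-derivative square $|D^3_x\widetilde u_k|^2$ by $\vartheta^4$, where $\vartheta=\vartheta_n$ is a smooth cutoff vanishing near $\partial B_n$. This, together with the convexity of $B_n$ (Lemma 2.4 of Bertoldi--Fornaro for the first-order term), is what makes $\partial_\nu\bm v_n\le\bm 0$ on $\partial B_n$: without the $\vartheta$-weights, the normal derivative of $|D^2_x\widetilde u_k|^2$ and $|D^3_x\widetilde u_k|^2$ has no sign. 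The price is an additional error term $g_k$ involving derivatives of $\vartheta$, which must then be absorbed into the gradient-square dissipation using Hypothesis \ref{hyp:smoothness_coeff}(iii); this is a genuine piece of work that your proposal does not anticipate. A generic "localize before differentiating" argument does not produce a functional to which the Protter--Weinberger maximum principle on $B_n$ can be applied, which is the step your proof relies on.

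Second, your functional $\sum_{j=h}^k a_j(t-s)^{j-h}|D^j_x u_{n,k}|^2$ starts at $j=h$, whereas the paper always retains all orders $j=0,\ldots,3$ and only adjusts the $(t-s)$ powers, namely $(t-s)^{(j-h)^+}$. This matters: when you differentiate $D_t|D^j_x u_{n,k}|^2$ and expand the coupling contribution $2\,D^j_x u_{n,k}\cdot D^j_x(C\uu_n)_k$, Leibniz produces terms like $(D^{j'}_x c_{k\ell})(D^{j-j'}_x u_{n,\ell})$ with $j-j' < h$. The resulting squares $|D^{j-j'}_x u_{n,\ell}|^2$ do not appear in your functional, so they cannot be absorbed into $(\widetilde C^P\bm v_n)_k + Mv_{n,k}$. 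Keeping the lower-order blocks $w_\ell,\ldots,w_\ell^{h-1}$ (with weight $1$, not $(t-s)^{\text{something}}$) is exactly what makes the absorption close, and at $t=s$ they are still controlled by $\|\f\|_{C^h_b}^2$.

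As a minor point, the paper takes the Neumann initial datum to be $\vartheta_n\f$ rather than $\f|_{B_n}$; this aligns the cutoff in the initial data with the cutoff in the Bernstein weights and is consistent with the limit passage, but is less essential than the two issues above.
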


\begin{proof}
(i) Let us first consider the case $h=0$ and $k=3$. We fix $\f\in C_b(\Rd;\Rm)$ and for every $n\in\N$ we denote by $\widetilde \uu_n$ be the unique classical solution to the Neumann-Cauchy problem associated to the operator $\A$ in $(s,T]\times B_n$, such that $\widetilde\uu_n(s,\cdot)=\vartheta_n\f$, where $\vartheta_n(x)=\varphi(n^{-1}|x|)$ for every $x\in\Rd$ and $\varphi\in C^\infty([0,\infty))$ is a nonincreasing function such that $\varphi\equiv1$ on $[0,1/2)$ and $\varphi\equiv 0$ on $[3/4,\infty)$. From Proposition \ref{pro:appr_sol_neumann}(i) it follows that the function $\widetilde \uu_n$ has spatial derivatives up to the third-order which belong to $C^{1+\alpha/2,2+\alpha}_{\rm loc}((s,T]\times B_n;\R^m)$ for every $n\in\N$. In the following, to lighten the notation we do not stress explicitly the dependence on $n$ and we simply write $\widetilde \uu$ and $\vartheta$.

Let us introduce the function $\vv=(v_1,\ldots,v_m)$, defined componentwise by
\begin{align*}
v_k(t,x)=&(\widetilde u_k(t,x))^2+\alpha(t-s)|\nabla_{x} \widetilde u_k(t,x)|^2+\alpha^2(t-s)^2(\vartheta(x))^2|D^2_{x}\widetilde u_k(t,x)|^2\\
&+\alpha^3(t-s)^3(\vartheta(x))^4|D^3_{x}\widetilde u_k(t,s)|^2\\
=&\!: w_k(t,x)+\alpha(t-s)w_k^1(t,x)+\alpha^2(t-s)^2(\vartheta(x))^2w_k^2(t,x)\\
&+\alpha^3(t-s)^3(\vartheta(x))^4w_k^3(t,x),
\end{align*}
for each $(t,x)\in(s,T]\times B_n$, where $\alpha\in(0,1]$ will be fixed later. 

Note that $\displaystyle \frac{\partial v_k}{\partial \nu}\leq 0$ on $(s,T]\times \partial B_n$ for every $k\in\{1,\ldots,m\}$. Indeed,
$\displaystyle\frac{\partial w_k}{\partial \nu}=2\widetilde u_k\frac{\partial \widetilde u_k}{\partial \nu}=0$ on $(s,T]\times \partial B_n$
and the normal derivative of $w_k^1$ is nonpositive since $B_n$ is convex (see \cite[Lemma 2.4]{BerFor04}). By Proposition \ref{pro:appr_sol_neumann}(ii), $\vv_n$ is continuous on $[s,\infty)\times B_n$.

In the following computations, we do not stress the dependence on $t$ and $x$ when it is not really needed, and we stress that the all the constants which appear depend neither on $n$ nor $x$, but depend on the interval $[s,T]$.
A long but straightforward computation reveals that
\begin{align*}
D_tv_k=&
\alpha w^1_k+2\vartheta^2\alpha^2(t-s)w^2_k+3\vartheta^4\alpha^3(t-s)^2w^3_k+{\mathscr I}_{\A,k}+{\mathscr I}_{Q,1,k}+{\mathscr I}_{Q,2,k}+{\mathscr I}_{Q,3,k}\\
&+{\mathscr I}_{{\bm b},1,k}
+{\mathscr I}_{{\bm b},2,k}+{\mathscr I}_{C,1,k}
+{\mathscr I}_{C,2,k}+{\mathscr I}_{Q,{\bm b},3}
\end{align*}
where
\begin{align*}
{\mathscr I}_{\A,k}=&2\widetilde u_k(\A\widetilde{\bm u})_k+2\alpha(t-s)\sum_{h=1}^dD_h\widetilde u_k(\A D_h\widetilde{\bm u})_k+2\vartheta^2\alpha^2(t-s)^2\sum_{h,p=1}^dD_{hp}\widetilde u_k(\A D_{hp}\widetilde{\bm u})_k\\
&+2\vartheta^4\alpha^3(t-s)^3\sum_{h,p,r=1}^dD_{hpr}\widetilde u_k(\A D_{hpr}\widetilde{\bm u})_k,\\
{\mathscr I}_{Q,1,k}=&2\alpha(t-s)\sum_{i,j,h=1}^dD_hq_{ij}^kD_h\widetilde u_kD_{ij}\widetilde u_k
+4\vartheta^2\alpha^2(t-s)^2\sum_{i,j,h,p=1}^dD_hq_{ij}^k D_{hp}\widetilde u_k D_{ijp}\widetilde u_k\\
&+6\vartheta^4\alpha^3(t-s)^3\sum_{i,j,h,p,r=1}^dD_rq_{ij}^kD_{hpr}\widetilde u_k D_{ijhp}\widetilde u_k,\\
{\mathscr I}_{Q,2,k}=&2\vartheta^2\alpha^2(t-s)^2\sum_{i,j,h,p=1}^dD_{hp}q_{ij}^kD_{ij}\widetilde u_kD_{hp}\widetilde u_k\\
&+6\vartheta^4\alpha^3(t-s)^3\sum_{i,j,h,p,r=1}^dD_{pr}q_{ij}^kD_{hpr}\widetilde u_kD_{ijh}\widetilde u_k,\\
{\mathscr I}_{{\bm b},1,k}=&2\alpha(t-s)\langle (J_x\bm b^k)\nabla_x \widetilde u_k,\nabla_x\widetilde u_k\rangle
+4\vartheta^2\alpha^2(t-s)^2\sum_{p=1}^d\langle (J_x\bm b^k)\nabla_x D_p\widetilde u_k,\nabla_x D_p\widetilde u_k\rangle\\
&+6\vartheta^4\alpha^3(t-s)^3\sum_{p,r=1}^d\langle (J_x\bm b^k)\nabla_x D_{pr}\widetilde u_k,\nabla_x D_{pr}\widetilde u_k\rangle,\\
{\mathscr I}_{{\bm b},2,k}=&2\vartheta^2\alpha^2(t-s)^2
\sum_{j,h,p=1}^dD_{hp}b_j^kD_j\widetilde u_kD_{hp}\widetilde u_k \\
&+6\vartheta^4\alpha^3(t-s)^3\sum_{j,h,p,r=1}^dD_{pr}b_j^k D_{jh}\widetilde u_k D_{hpr}\widetilde u_k,\\
{\mathscr I}_{C,1,k}=&2\alpha(t-s)\sum_{\ell=1}^m\sum_{h=1}^dD_hc_{k\ell} \widetilde u_{\ell} D_h\widetilde u_k
+4\vartheta^2\alpha^2(t-s)^2\sum_{\ell=1}^m\sum_{h,p=1}^dD_hc_{k\ell} D_p\widetilde u_{\ell} D_{hp}\widetilde u_k\\
&+6\vartheta^4\alpha^3(t-s)^3\sum_{\ell=1}^m\sum_{h,p,r=1}^dD_hc_{k\ell} D_{pr}\widetilde u_{\ell} D_{hpr}\widetilde u_k,\\
{\mathscr I}_{C,2,k}= & 2\vartheta^2\alpha^2(t-s)^2\sum_{\ell=1}^m\sum_{h,p=1}^d\!\!D_{hp}c_{k\ell} \widetilde u_{\ell} D_{hp}\widetilde u_k \\
& +6\vartheta^4\alpha^3(t-s)^3\sum_{\ell=1}^m\sum_{h,p,r=1}^d\!\!D_{hp}c_{k\ell}D_r\widetilde u_{\ell} D_{hpr}\widetilde u_k,\\
{\mathscr I}_{Q,{\bm b},C,3}=&2\vartheta^4\alpha^3(t-s)^3\bigg (\sum_{i,j,h,p,r=1}^dD_{hpr}q_{ij}^k D_{ij}\widetilde u_k D_{hpr}\widetilde u_k
+\sum_{j,h,p,r=1}^dD_{hpr}b_j^k D_j\widetilde u_k D_{hpr}\widetilde u_k\\
&\phantom{+2\vartheta^4\alpha^3(t-s)^3\bigg (\;\;\,}
+\sum_{\ell=1}^m\sum_{h,p,r=1}^dD_{hpr}c_{k\ell}\widetilde u_{\ell} D_{hpr}\widetilde u_k\bigg ).
\end{align*}

Since for every $\f\in C^1(B_n;\Rm)$ it holds that $f_k(\A\f)_k\leq (\A^P\g)_k+Mg_k-2\mu^k\|\nabla f_k\|^2$, where $g_k:=f_k^2$ for every $k=1,\ldots,m$ (see the proof of Proposition \ref{prop-2.6}(ii)), applying this formula to $\widetilde u_k$ and its derivatives,
and setting  $w_k^4=\displaystyle\sum_{i,h,p,r=1}^d|D_{ihpr}\widetilde u_k|^2$, it follows that
\begin{align*}
{\mathscr I}_{\A,k}
\le & g_k+(\A^p\bm w)_k+Mw_k-2\mu^kw_k^1
+\alpha(t-s)[(\A^P\bm w^1)_k+Mw_k^1-2\mu^kw^2_k]\\
&+\sum_{i=2}^3\vartheta^{2i-2}\alpha^i(t-s)^i[(\A^P\bm w^i)_k+Mw^i_k-2\mu^kw^{i+1}_k]\\
\le & (\A^P\bm v)_k+M v_k -2\mu^k\sum_{i=1}^4\alpha^{i-1}(t-s)^{i-1}w_k^i,
\end{align*}
where
\begin{align*}
g_k= & -2\vartheta\alpha^2(t-s)^2[w_k^2+2\vartheta^2\alpha(t-s)w_k^3]{\rm Tr}(Q^kD^2\vartheta) -4\alpha^2(t-s)^2\vartheta\langle Q^k\nabla_{x} w^2_k,\nabla\vartheta\rangle\\
&-2\vartheta\alpha^2(t-s)^2[w_k^2+2\vartheta^2\alpha(t-s)w_k^3]\langle b^k,\nabla \vartheta\rangle
-8\alpha^3(t-s)^3\vartheta^3\langle Q^k\nabla_{x} w^3_k,\nabla\vartheta\rangle.
\end{align*}
Taking Hypothesis \ref{hyp:smoothness_coeff}$(iii)$ into account, we get $\max\{|{\rm Tr}[Q^kD^2\vartheta]|, |Q^k\nabla \vartheta|, -\langle b^k, \nabla \vartheta\rangle\} \leq C\mu^k$ for some positive constant $C$, which does not depend on $n$. By applying the Young inequality $2ab\leq \varepsilon a^2+\varepsilon ^{-1}b^2$ for every $a,b,\varepsilon>0$, we can estimate
\begin{align*}
g_k\leq & C\alpha^2(t-s)^2\mu^k(\vartheta+\sqrt{d}\varepsilon^{-1})w_k^2+C\alpha^2(t-s)^2\vartheta^2\mu^k[\sqrt{d}\varepsilon+\alpha(t-s)(\vartheta+\sqrt{d}\varepsilon^{-1})]w_k^3\\
&+C\sqrt d\varepsilon \alpha^3(t-s)^3\vartheta^4\mu^k w_k^4
\end{align*}
for some positive constant $C$ and every $\varepsilon>0$. Hence, taking $\varepsilon=\sqrt{\alpha}$, we get
\begin{align*}
{\mathscr I}_{\A,k}\leq & (\A^P\bm v)_k+M v_k-2\mu^k[w^1_k+\alpha(t-s)w_k^2+\alpha^2(t-s)^2\vartheta^2w^3_k+\alpha^3(t-s)^3\vartheta^4w_k^4]\\
&+C\alpha^{\frac{3}{2}}(t-s)^2\mu^k(\sqrt{\alpha}\vartheta+\sqrt{d})w_k^2+C\alpha^{\frac{5}{2}}(t-s)^2\vartheta^2\mu^k[\sqrt{d}+(t-s)(\sqrt{\alpha}\vartheta+\sqrt{d})]w_k^3\\
&+C\sqrt d\alpha^{\frac{7}{2}}(t-s)^3\vartheta^4\mu^k w_k^4.
\end{align*}

Now, we observe that, since $\langle (J_x\bm b^k)\xi,\xi\rangle\le r^k|\xi|^2$ for every $\xi\in\Rd$, we can estimate
\begin{align*}
{\mathscr I}_{{\bm b},1,k}
\le &2\alpha(t-s)r^kw_k^1+4\alpha^2(t-s)^2\vartheta^2r^kw_k^2+6\alpha^3(t-s)^3r^k\vartheta^4w_k^3.
\end{align*}

Using Cauchy-Schwarz inequality and the assumption $\|\nabla_{x} q_{ij}^k\|\le L_k\mu^k$ for every $i,j=1,\ldots,d$, we can estimate
\begin{align*}
2\sum_{i,j,h=1}^dD_hq_{ij}^kD_hf_kD_{ij}f_k\le &2\sum_{i,j=1}^d\|\nabla_{x} q_{ij}\|\|\nabla f_k\|\|D_{ij}f_k\|\\
\le &2L_kd\mu^k\|\nabla f_k\|\|D^2_{x} f_k\|\le L_kd\mu^k\Big (\alpha^{-\frac{1}{2}}\|\nabla f_k\|^2+\alpha^{\frac{1}{2}}\|D^2 f_k\|^2\Big )
\end{align*}
for every $\f\in C^2(B_n;\R^m)$. Applying this estimate to $\widetilde\uu$ and its spatial derivatives, gives
\begin{align*}
{\mathscr I}_{Q,1,k}\le L_kd\mu^k\sum_{i=1}^3i\vartheta^{2i-2}\alpha^i(t-s)^i \Big(\alpha^{-\frac{1}{2}}w^i_k+\alpha^{\frac{1}{2}}w^{i+1}_k\Big ).
\end{align*}
Similarly, using the assumption $\|D^2_{x}q_{ij}^k\|\le L_k\mu^k$ and $\|D^2_{x}b_j\|\le b^{k,2}$ for every $i,j=1,\ldots,d$, we can estimate
\begin{align*}
{\mathscr I}_{Q,2,k}\le &L_kd\mu^k(
2\vartheta^2\alpha^2(t-s)^2w_k^2+6\vartheta^4\alpha^3(t-s)^3w_k^3)\\
{\mathscr I}_{{\bm b},2,k}\le & \sqrt{d}b^{k,2}
\left[\vartheta^2\alpha^2(t-s)^2\Big(\alpha^{-\frac12}w^1_k+\alpha^{\frac12}w^2_k\Big)+\vartheta^4\alpha^3(t-s)^3\Big(\alpha^{-\frac12}w^2_k+\alpha^{\frac12}w^3_k\Big)\right].
\end{align*}

Next, using the conditions $\|D_x^hc_{k\ell}\|\le c^{k,h}$ for every $k,\ell=1,\ldots,m$ and $h=1,2,3$, we can estimate
\begin{align*}
2\sum_{\ell=1}^m\sum_{h=1}^dD_hc_{k\ell} f_{\ell} D_hf_k
\le & 2\sqrt{m}c^{k,1}\|\f\|{\|\nabla f_k\|} 
\le \sqrt{m}c^{k,1}\left (\alpha^{-\frac{1}{2}}\|\f\|^2+\alpha^{\frac{1}{2}}\|J\f\|^2\right ),
\end{align*}
for every $\f\in C^1(B_n;\R^m)$.
Applying this estimate to $\widetilde\uu$ and its spatial derivatives we get
\begin{align*}
{\mathscr I}_{C,1,k}
\leq & \sqrt m c^{k,1}\sum_{i=1}^3i\vartheta^{2i-2}\alpha^{i}(t-s)^{i}\bigg(\alpha^{-\frac{1}{2}}\sum_{\ell=1}^m w_\ell^i+\alpha^{\frac{1}{2}}\sum_{\ell=1}^mw_\ell^{i+1}\bigg ).
\end{align*}
Similarly, we estimate (with the convention that $w_\ell^0=w_\ell$ for every $\ell=1,\ldots,m$)
\begin{align*} 
{\mathscr I}_{C,2,k}\le \sqrt m c^{k,2}
\sum_{i=1}^2(2i-1)\vartheta^{2i}\alpha^{i+1}(t-s)^{i+1}\bigg(\alpha^{-\frac{1}{2}}\sum_{\ell=1}^mw_\ell^{i-1}+\alpha^{\frac{1}{2}}\sum_{\ell=1}^mw_\ell^{i+1}\bigg).
\end{align*}
Finally, using the conditions $\|D^3_{x}q^k_{ij}\|\le L_k\mu^k$, $\|D^3_{x}b^k_j\|\le b^{k,3}$ and $\|D^3_{x}c_{k\ell}\|\le c^{k,3}$ for every $j=1,\ldots,d$ and $\ell=1,\ldots,m$, we get
\begin{align*}
{\mathscr I}_{Q,{\bm b},C,3}\le 
\vartheta^4\alpha^3(t-s)^3\bigg [&\sqrt{d}b^{k,3}\left (\alpha^{-\frac{1}{2}}w^1_k+\alpha^{\frac{1}{2}}w^3_k\right ) +L_kd\mu^k\left (\alpha^{-\frac{1}{2}}w_k^2+\alpha^{\frac{1}{2}}w^3_k\right )\\
&\;+\sqrt{m}c^{k,3}\bigg (\alpha^{-\frac{1}{2}}\sum_{\ell=1}^mw_{\ell}+
\alpha^{\frac{1}{2}}
\sum_{\ell=1}^mw_{\ell}^3\bigg )\bigg ].
\end{align*}

Combining everything together, we conclude that
\begin{align*}
D_tv_k\leq & ((\A^P+M{\rm Id})\bm v)_k+{\mathscr H}_1w_k^1+{\mathscr H}_2\alpha(t-s)w_k^2+{\mathscr H}_3\alpha^2(t-s)^2\vartheta^2 w_k^3\\
&+{\mathscr H}_4\alpha^3(t-s)^3\vartheta^4w_k^4+{\mathscr H}_5\sum_{j=1}^mw_j +{\mathscr H}_6\alpha(t-s)\sum_{j=1}^mw_j^1 +{\mathscr H}_7\alpha^2(t-s)^2\vartheta^2\sum_{j=1}^mw_j^2\\
&+{\mathscr H}_8\alpha^3(t-s)^3\vartheta^4\sum_{j=1}^mw_j^3,
\end{align*}
where
\begin{align*}
{\mathscr H}_1= & \alpha-2\mu^k+\sqrt{\alpha}L_kd\mu^k(t-s)+\alpha(t-s)[2r^k+\sqrt{\alpha}(t-s)\sqrt db^{k,2}+\alpha^{\frac{3}{2}}(t-s)^2\sqrt db^{k,3}], \\[1mm]
{\mathscr H}_2= & 2\alpha-2\mu^k+L_kd\sqrt{\alpha}\mu^k+(t-s)[C\sqrt{\alpha}(\sqrt{\alpha}+\sqrt d)
+2\sqrt{\alpha}L_kd]\mu^k\\
&+\alpha(t-s)\vartheta^2[4r^k+2L_kd\mu^k+\sqrt{\alpha}\sqrt db^{k,2}+\sqrt{\alpha}(t-s)(3\sqrt db^{k,2}+L_kd\mu^k)]\\[1mm]
{\mathscr H}_3= & [3\alpha+(2\sqrt{\alpha}L_kd+C\sqrt d\sqrt{\alpha}-2)\mu^k]+\sqrt{\alpha}(t-s)\mu^k[C\sqrt{\alpha}+C\sqrt{d}+3L_kd]\\
&+\alpha(t-s)\vartheta^2[6r^k+(6L_kd+\sqrt{\alpha} L_kd)\mu^k+3\sqrt{\alpha}\sqrt db^{k,2}+\sqrt{\alpha}\sqrt{d}b^{k,3}]\\[1mm]
{\mathscr H}_4 = & [(C\sqrt d+3L_kd)\sqrt{\alpha}-2]\mu^k, \\
{\mathscr H}_5 = & \sqrt{\alpha}(t-s)\sqrt{m}[c^{k,1}+\alpha(t-s)c^{k,2}+\alpha^2(t-s)^2c^{k,3}], \\
{\mathscr H}_6= & \sqrt{\alpha}\sqrt{m}[ c^{k,1}+2(t-s)c^{k,1}+3\alpha(t-s)^2c^{k,2}], \\
{\mathscr H}_7 = & \sqrt{\alpha}\sqrt{m}[2 c^{k,1}+c^{k,2}+3(t-s)c^{k,1}], \\
{\mathscr H}_8 = & \sqrt{\alpha}\sqrt{m}(3c^{k,1}+3c^{k,2}+c^{k,3}).
\end{align*}

Let us separately estimate these terms. From Hypothesis \ref{hyp:smoothness_coeff} we can take $\alpha$ small enough such that
$2r^k+\sqrt{\alpha}(t-s)\sqrt db^{k,2}+\alpha^{\frac{3}{2}}(t-s)^2\sqrt db^{k,3}\le 2{\mathscr M}_k\mu^k$, where $\mathscr M_k$ has been defined in Hypothesis \ref{hyp:smoothness_coeff}$(iv)$, so that
\begin{align*}
{\mathscr H}_1 \leq & \alpha-2\mu^k+\sqrt{\alpha}L_kd\mu^k(T-s)+2\alpha(T-s){\mathscr M}_k\mu^k.
\end{align*}
Hence, taking $\alpha$ smaller, if needed, we can make ${\mathscr H}_1$ negative on $(s,T]\times B_n$.
Arguing similarly, we can make ${\mathscr H}_j$ ($j=2,3,4$) negative choosing $\alpha$ sufficiently small. As far as the remaining coefficients
${\mathscr H}_j$ are concerned, we observe that we can make each of them smaller than $m^{-1}(1+|M_k|)$, choosing $\alpha$ sufficiently small. This implies that
$D_tv_k \leq ((\widetilde {\A}^P+M{\rm Id})\vv)_k$ on $(s,T]\times B_n$, where $\widetilde {\A}^P$ is the operator defined in \eqref{operatore_A_tilde}. 

Now, we consider the function $\widetilde \vv$, defined by $\widetilde \vv(t,x)=\vv(t,x)-e^{2\overline M(t-s)}\|\f\|_\infty^2\1$ for every $(t,x)\in[s,T]\times B_n$, and observe that
\begin{align*}
D_t\widetilde v_k
\leq & ((\widetilde{\A}^P+M Id)\vv)_k -2\overline Me^{2\overline M(t-s)}\|\f\|_\infty^2 \\
= & ((\widetilde {\A}^P+M Id)\widetilde \vv)_k+\bigg (\sum_{h=1}^m\widetilde c_{kh}^P-(1+2M^+)\bigg )e^{2\overline M(t-s)}\|\f\|_\infty^2.
\end{align*}
From \eqref{stima_somma_tilde_c} it turns out that $\widetilde \vv_n$ satisfies the following system of inequalities
\begin{align*}
\left\{
\begin{array}{ll}
D_t\widetilde \vv_n(t,x)-( \widetilde {\A}^P+M{\rm Id})\widetilde \vv_n(t,x)\leq \bm 0, & (t,x)\in(s,T]\times B_n, \vspace{1mm}\\
\displaystyle \frac{\partial \widetilde \vv_n}{\partial\nu}(t,x)\leq \bm 0, & (t,x)\in(s,T]\times \partial B_n, \vspace{1mm} \\
\widetilde \vv_n(s,x)\leq \bm 0, & x\in \overline B_n.
\end{array}
\right.
\end{align*}
By applying \cite[Theorem 3.15]{Pr67}, with $ {\A}$ being replaced by $ \widetilde {\A}^P+ M{\rm Id}$, we conclude that $\widetilde \vv_n\leq \bm 0$ on $(s,T]\times B_n$, i.e.,
\begin{align*}
&(u_{n,k}(t,x))^2+\alpha(t-s)|\nabla_{x} u_{n,k}(t,x)|^2+\alpha^2(t-s)^2(\vartheta_n(x))^2|D^2_{x}u_{n,k}(t,x)|^2\\
&+\alpha^3(t-s)^3(\vartheta_n(x))^4|D^3_{x}u_{n,k}(t,s)|^2\le e^{2\overline M(t-s)}\|\f\|_\infty^2
\end{align*}
for every $n\in\N$ and $k=1,\ldots,m$, with the constant $\alpha$ being independent of $n$. To conclude the proof, it suffices to let $n$ tend to $\infty$ taking Proposition \ref{pro:appr_sol_neumann}$(iii)$ into account.

The other cases can be deduced from similar arguments applied to the function $\vv^h=(v_1^h,\ldots,v_m^h)$ ($h=1,2,3$), defined by
\begin{align*}
v_k^h(t,x)=&\!: w_k(t,x)+\alpha(t-s)^{(1-h)^+}w_k^1(t,x)+\alpha^2(t-s)^{(2-h)^+}(\vartheta(x))^2w_k^2(t,x)\\
&\,+\alpha^3(t-s)^{(3-h)^+}(\vartheta(x))^4w_k^3(t,x)
\end{align*}
for $k=1,\ldots,m$ and $(t,x)\in [s,T]\times\Rd$, where $\ww, \ww^1,\ww^2$ and $\ww^3$ are defined as above and $\alpha\in(0,1]$ is to be properly fixed. 

{\vspace{2mm}}
(ii) Let us fix $\overline t\in[s,T]$ and $\mathcal T>0$. Arguing as in the proof of $(i)$ we get \eqref{stima_der_smgr_totale}. We notice that the constant $C(\overline t,\mathcal T)$ is uniform with respect to $\overline t\in[s,T]$ since the constant $C(s,T)$ in \eqref{stima_der_op_ev_totale} depends on the interval $[s,T]$.
\end{proof}

From Theorem \ref{thm-3.4} we can estimate the behaviour of some H\"older norms of $\G(t,s)\f$ and of $\T_{\overline t}(\tau)\f$, $\overline t\in I$, when $t-s$ and $\tau$ tend to $0$, respectively.

\begin{coro}
Under the assumptions of Theorem $\ref{thm-3.4}$, for every $\beta,\theta\in [0,3]$ with $\beta\le\theta$, there exists a positive constant $C=C(\beta,\theta,s,T)$ such that
\begin{equation}
\|\G(t,s)\f\|_{C^{\theta}_b(\Rd;\Rm)}\le C\frac{e^{\overline M(t-s)}}{(t-s)^{(\theta-\beta)/2}}\|\f\|_{C^{\beta}_b(\Rd;\Rm)},\quad\;\,\f\in C_b^\theta(\Rd;\Rm), \ t\in(s,T],
\label{stima-11}
\end{equation}
where $\overline M:=2^{-1}(1+M+2M^+)$. Further, for every $\overline t\in [s,T]$, $\beta,\theta\in [0,3]$, with $\beta\le\theta$, and $\omega>0$ there exists a positive constant $K=K(\overline t,\beta,\theta,\omega)$, uniform with respect to $\overline t\in[s,T]$,  such that
\begin{equation}
\|\T_{\overline t}(\tau)\f\|_{C^{\theta}_b(\Rd;\Rm)}\le K\frac{e^{(\omega+\overline {M})\tau}}{\tau^{(\theta-\beta)/2}}\|\f\|_{C^{\beta}_b(\Rd;\Rm)},\quad\;\,\f\in C_b^\theta(\Rd;\Rm), \ \tau>0,
\label{stima-12}
\end{equation}
where $\overline M:=2^{-1}(1+M_{\{\overline t\}}+2(M_{\{\overline t\}})^+)$.
\end{coro}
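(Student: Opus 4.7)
The plan is to deduce both \eqref{stima-11} and \eqref{stima-12} from the integer-order bounds of Theorem~\ref{thm-3.4} via real interpolation on Hölder spaces, using two standard tools: the identification
$$
C^\sigma_b(\Rd;\Rm)=(C^h_b(\Rd;\Rm),C^{h+1}_b(\Rd;\Rm))_{\sigma-h,\infty},\qquad h\in\N\cup\{0\},\ \sigma\in(h,h+1),
$$
and the classical interpolation inequality for Hölder norms
$$
\|\g\|_{C^\theta_b(\Rd;\Rm)}\le C\|\g\|_{C^\beta_b(\Rd;\Rm)}^{1-\vartheta}\|\g\|_{C^k_b(\Rd;\Rm)}^{\vartheta},\qquad \vartheta=\frac{\theta-\beta}{k-\beta},
$$
valid for $0\le\beta<\theta<k$ (see e.g.\ Lunardi, \emph{Interpolation Theory}).

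For \eqref{stima-11}, I would argue in three stages. \emph{(a) $C^\beta_b$-stability.} For every $\beta\in[0,3]$ I would prove $\|\G(t,s)\f\|_{C^\beta_b}\le Ce^{\overline M(t-s)}\|\f\|_{C^\beta_b}$: when $\beta$ is integer this is the $h=k$ case of \eqref{stima_der_op_ev_totale}, while for $\beta\in(h,h+1)$ an application of the single-operator real interpolation theorem to $\G(t,s)$ (bounded on both $C^h_b$ and $C^{h+1}_b$ by the integer case) yields the claim. \emph{(b) Integer target.} For integer $\theta\in\{0,\ldots,3\}$ and fractional $\beta\in(h_0,h_1)\subset[0,\theta]$, I would interpolate source using the two endpoint bounds $\|\G(t,s)\|_{C^{h_i}_b\to C^\theta_b}\le C(t-s)^{-(\theta-h_i)/2}e^{\overline M(t-s)}$ from Theorem~\ref{thm-3.4}; the geometric mean with weight $\vartheta=\beta-h_0$ produces the exponent $(1-\vartheta)(\theta-h_0)+\vartheta(\theta-h_1)=\theta-\beta$, as needed. \emph{(c) General case.} For arbitrary $\beta\le\theta$ in $[0,3]$, apply the Hölder interpolation inequality to $\g=\G(t,s)\f$ with $k=\lceil\theta\rceil\in\{0,\ldots,3\}$ and $\vartheta=(\theta-\beta)/(k-\beta)$ (the cases $\theta=\beta$ and $\theta=k$ being trivial); bounding $\|\g\|_{C^\beta_b}$ by stage~(a) and $\|\g\|_{C^k_b}$ by stage~(b) (which applies since $k$ is integer and $\beta\le k$), the two factors combine to give $C(t-s)^{-\vartheta(k-\beta)/2}e^{\overline M(t-s)}\|\f\|_{C^\beta_b}=C(t-s)^{-(\theta-\beta)/2}e^{\overline M(t-s)}\|\f\|_{C^\beta_b}$.

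For \eqref{stima-12}, the same three-stage argument applied to Theorem~\ref{thm-3.4}(ii) yields the estimate on $(0,\mathcal T]$ with constants uniform in $\overline t\in[s,T]$, for any preassigned $\mathcal T>0$. To extend to $\tau>\mathcal T$ with the additional factor $e^{\omega\tau}$, I would exploit the semigroup identity $\T_{\overline t}(\tau_1+\tau_2)=\T_{\overline t}(\tau_1)\T_{\overline t}(\tau_2)$: for such $\tau$ write $\tau=n\mathcal T+r$ with $r\in[0,\mathcal T)$, iterate the stage-(a) stability bound over the $n$ blocks of length $\mathcal T$ to obtain $\|\T_{\overline t}(\tau)\|_{\mathcal L(C^\beta_b)}\le C_1^{n+1}e^{\overline M\tau}$, and then compose with one application of the smoothing estimate on an interval of length $\mathcal T/2$ to recover the $C^\beta_b\to C^\theta_b$ bound; note that $\tau^{-(\theta-\beta)/2}$ is $O(1)$ for $\tau\ge\mathcal T$ and may be reinserted harmlessly into $K$. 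Choosing $\mathcal T$ large enough that $\log C_1/\mathcal T<\omega$ makes $C_1^n\le e^{\omega\tau}$, absorbing the geometric factor into $e^{\omega\tau}$. The main point requiring care is precisely this last step, where the multiplicative constants accumulated by semigroup iteration must be carefully tuned against the free parameter $\omega$.
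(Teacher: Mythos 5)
Your derivation of \eqref{stima-11} is essentially the paper's proof: the paper also deduces the non-integer cases by real interpolation between the integer-order bounds of Theorem~\ref{thm-3.4}, illustrating the identity $C^\beta_b(\Rd;\Rm)=(C_b(\Rd;\Rm),C^2_b(\Rd;\Rm))_{\beta/2,\infty}$; your three-stage organization is sound and yields the same exponent algebra. The genuine difference lies in \eqref{stima-12}, and it contains a gap.

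For \eqref{stima-12}, your plan iterates the stage-(a) stability bound over $n\approx\tau/\mathcal T$ blocks and then tunes $\mathcal T$ so that $C_1^n\le e^{\omega\tau}$. But the constant $C_1$ in stage (a) is the constant of Theorem~\ref{thm-3.4}(ii), which is only proved on the finite window $(0,\mathcal T]$ and whose dependence on $\mathcal T$ is not tracked. Writing ``choose $\mathcal T$ large enough that $\log C_1/\mathcal T<\omega$'' silently assumes either that $C_1$ does not depend on $\mathcal T$ (it does) or that $\log C_1(\mathcal T)=o(\mathcal T)$ as $\mathcal T\to\infty$ (not available from the statement of the theorem). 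With a fixed block length, the argument produces an uncontrollable extra rate $\log C_1$ in the exponential; with a growing block length, you need exactly the estimate on $C_1(\mathcal T)$ that you have not established. So as written the tuning step does not close.

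The paper avoids the issue entirely with a single split: fix $T=T(\omega)\ge 1$ so that $\tau^{(\theta-\beta)/2}\le e^{\omega\tau}$ for $\tau>T$. For $\tau\in(0,T]$ use Theorem~\ref{thm-3.4}(ii) with $\mathcal T=T$. For $\tau>T$ write $\T_{\overline t}(\tau)=\T_{\overline t}(T)\T_{\overline t}(\tau-T)$ and bound the first factor in $\mathcal L(C_b;C^\theta_b)$ (a fixed constant, since $T$ is fixed) and the second factor in $\mathcal L(C_b)$ by \eqref{stima_uni_gen_auto}, which is valid for \emph{all} times with no hidden $\mathcal T$-dependence. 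The source Hölder regularity of $\f$ is simply discarded for $\tau>T$; the resulting loss of the factor $\tau^{-(\theta-\beta)/2}$ is paid for by $e^{\omega\tau}$ thanks to the choice of $T$. If you want to keep the iterative structure, you should first note that $\|\T_{\overline t}(\mathcal T)\|_{\mathcal L(C^\beta_b)}\le\|\T_{\overline t}(1)\|_{\mathcal L(C_b;C^\beta_b)}\,\|\T_{\overline t}(\mathcal T-1)\|_{\mathcal L(C_b)}\le Ce^{\overline M\mathcal T}$ for $\mathcal T\ge 1$ with a $\mathcal T$-independent $C$ (again via \eqref{stima_uni_gen_auto}); only then is your ``choose $\mathcal T$ large'' step legitimate.
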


\begin{proof}
When at least one between $\theta$ and $\beta$ is not integer, the proof follows from an interpolation argument. We illustrate it in the particular case when $\theta=2$ and $\beta\in (1,2)$.
It is well-known that $C^{\beta}_b(\Rd)$ is the real interpolation space of order $(\beta/2,\infty)$ between $C_b(\Rd)$ and $C_b^2(\Rd)$. Clearly, this implies that
$C^{\beta}_b(\Rd;\Rm)=(C_b(\Rd;\Rm);C^{\beta}_b(\Rd;\Rm))_{\beta/2,\infty}$. Interpolating the estimates (we notice that $M\leq \overline M$)
\begin{align*}
&\|\G(t,s)\|_{L(C_b(\Rd;\Rm),C_b^{2k}(\Rd;\Rm))}\le C^k\frac{e^{\overline M(t-s)}}{(t-s)^k},
\qquad\;\,k=0,1,\end{align*}
we obtain estimate \eqref{stima-11} in this case.

Let us prove estimate \eqref{stima-12} with $\beta,\theta\in\{0,1,2,3\}$, $\beta\leq \theta$, the general case follows by interpolation. In particular we consider the case $\beta=0$ and $\theta=3$, since for the other integer values of $\beta$ and $\theta$ we get the assertion by analogous arguments. Let us fix $\omega>0$, and let $T\geq 1$ be such that $e^{\omega \tau}\tau^{-3/2}\geq1$ for every $\tau>T$. For $\tau\in (0,T]$, from \eqref{stima_der_smgr_totale} we get
\begin{align*}
\|\T_{\overline t}(\tau)\f\|_{C_b^{3}(\Rd;\Rm)}\leq \frac{C_{0,3}(\overline t, T)e^{\overline M}}{\tau^{3/2}}\|\f\|_{C_b(\Rd;\Rm)}, \qquad\;\, \f\in C_b(\Rd;\Rm),\;\, \tau\in (0,T].
\end{align*}
If $\tau>T$, then we use the semigroup property to split $\T_{\overline t}(\tau)=\T_{\overline t}(T)\T_{\overline t}(\tau-T)$ and so
\begin{align*}
\|\T_{\overline t}(\tau)\f\|_{C_b^{3}(\Rd;\Rm)}
\le & \|\T_{\overline t}(T)\|_{L(C_b(\Rd;\Rm);C_b^3(\Rd;\Rm))}\|\T_{\overline t}(\tau-T)\f\|_{C_b(\Rd;\Rm)} \\
\leq & \frac{C_{0,3}(\overline t, T)e^{\overline M}}{T^{3/2}}e^{\overline M(\tau-T)}\|\f\|_{C_b(\Rd;\Rm)} \\
\leq &  \frac{C_{0,3}(\overline t, T)e^{(\omega+\overline M)\tau}}{\tau^{3/2}}\|\f\|_{C_b(\Rd;\Rm)}.
\end{align*}
Estimate \eqref{stima-12} follows with $K(\overline t, 0,3,\omega)=C_{0,3}(\overline t, T)$.
\end{proof}

\section{Maximal Schauder regularity for problems \eqref{ellipt} and \eqref{parab}}
\label{sec:max_reg}
\subsection{Continuity results}

In this subsection we study the continuity of $\T_t(\tau)$ also with respect to $t\in I$.
\begin{thm}
\label{thm:continuita_totale}
Let Hypotheses $\ref{hyp-base}$ and $\ref{hyp:smoothness_coeff}$ be satisfied, and fix $\f\in  C_b(\Rd;\Rm)$. Then:
\begin{enumerate}[(i)]
\item 
the function $I\times (0,\infty)\times \Rd\ni (t,\tau,x)\mapsto (\T_t(\tau)\f)(x)$ is continuous;
\item 
for every bounded sequence $(\f_n)\subset C_b(\Rd;\Rm)$, which converges to $\f$ locally uniformly on $\Rd$, the function $(t,\tau,x)\mapsto (\T_t(\tau)\f_n)(x)$ converges to the function $(t,\tau,x)\mapsto (\T_t(\tau)\f)(x)$ locally uniformly in $I\times (0,\infty)\times \Rd$ as $n$ tends to $\infty$.
\end{enumerate}

\end{thm}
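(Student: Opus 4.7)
My strategy is compactness plus uniqueness. Fix a sequence $(t_n,\tau_n,x_n)\to(t_0,\tau_0,x_0)$ in $I\times(0,\infty)\times\Rd$, set $\uu_n:=\T_{t_n}(\cdot)\f$, and pick $\mathcal T>0$ and a bounded interval $J\subset I$ so that $\{\tau_n\}\subset(0,\mathcal T]$ and $\{t_n\}\subset J$. By Remark \ref{rmk-claudio} the sequence $(\uu_n)$ is uniformly bounded on $[0,\mathcal T]\times\Rd$, with a bound independent of $n$ because $M_{\{t_n\}}\le M_J$. Since the coefficients of $\A(t_n)$ are uniformly bounded in $C^\alpha_b(\Omega)$ for every bounded open $\Omega\subset\Rd$, the interior Schauder estimate argument of Proposition \ref{pro:appr_sol_neumann}(iii) yields a bound of $\|\uu_n\|_{C^{1+\alpha/2,2+\alpha}(E;\Rm)}$ independent of $n$ on every compact $E\subset(0,\mathcal T]\times\Rd$. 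Arzel\`a--Ascoli and a diagonal argument then produce a subsequence converging to some $\uu^\ast$ in $C^{1,2}_{\rm loc}((0,\mathcal T]\times\Rd;\Rm)$, and the joint $(t,x)$-H\"older continuity of the coefficients lets me pass to the limit in $D_\tau\uu_n=\A(t_n)\uu_n$, giving $D_\tau\uu^\ast=\A(t_0)\uu^\ast$ on $(0,\mathcal T]\times\Rd$.

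\textbf{The main obstacle} is identifying $\uu^\ast$ with $\T_{t_0}(\cdot)\f$: the $C^{1,2}_{\rm loc}$ convergence holds only away from $\tau=0$, so the initial condition $\uu^\ast(0,\cdot)=\f$ is not immediately recovered. To handle this I would insert the Neumann approximants $\vv_{n,R}:=\T^{\mathcal N,R}_{t_n}(\cdot)\f$, which are continuous up to $\tau=0$ on $[0,\mathcal T]\times\overline{B_R}$ by classical parabolic theory on bounded domains, with joint continuity in $(t,\tau,x)$ following from the H\"older dependence of the coefficients on $t$. The Lyapunov condition in Hypothesis \ref{hyp-base}(iii), applied on the common interval $J$, provides a tail estimate of the form $|\vv_{n,R}-\uu_n|(\tau,x)\le e^{\lambda_J\tau}\|\f\|_\infty \boldsymbol\varphi_J(x)/\inf_{|y|=R}\boldsymbol\varphi_J(y)$ (as in the existence proof of Proposition \ref{prop-2.6}), so that $\vv_{n,R}\to\uu_n$ as $R\to\infty$ locally uniformly in $(\tau,x)$ with rate independent of $n$. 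The triangulation
\begin{align*}
&(\T_{t_n}(\tau_n)\f)(x_n)-(\T_{t_0}(\tau_0)\f)(x_0)\\
&\quad=[\uu_n(\tau_n,x_n)-\vv_{n,R}(\tau_n,x_n)]+[\vv_{n,R}(\tau_n,x_n)-\vv_{0,R}(\tau_0,x_0)]+[\vv_{0,R}(\tau_0,x_0)-(\T_{t_0}(\tau_0)\f)(x_0)]
\end{align*}
then closes the argument: first choose $R$ large to make the first and third brackets small uniformly in $n$, then let $n\to\infty$ in the middle bracket by continuity on the bounded cylinder $[0,\mathcal T]\times\overline{B_R}$.

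\textbf{Plan for part (ii).} Here I would exploit the comparison bound $|\T_t(\tau)\f_n-\T_t(\tau)\f|\le\T^P_t(\tau)|\f_n-\f|$ from Proposition \ref{prop-2.6}(ii), reducing matters to showing that $\T^P_t(\tau)\g_n\to\bm 0$ locally uniformly in $(t,\tau,x)\in I\times(0,\infty)\times\Rd$ whenever $\g_n\ge\bm 0$ is bounded in $C_b(\Rd;\Rm)$ and tends to $\bm 0$ locally uniformly. Splitting $\g_n=\vartheta_R\g_n+(1-\vartheta_R)\g_n$ via a smooth cut-off $\vartheta_R\equiv 1$ on $B_R$ and $\vartheta_R\equiv 0$ outside $B_{2R}$, the compactly supported piece satisfies $\|\vartheta_R\g_n\|_\infty\to 0$ and is controlled by the uniform operator bound of Remark \ref{rmk-claudio}, while the outer piece is dominated, again via the Lyapunov function of Hypothesis \ref{hyp-base}(iii), by $\|\g_n\|_\infty e^{\lambda_J\tau}\boldsymbol\varphi_J(x)/\inf_{|y|\ge R}\boldsymbol\varphi_J(y)$, which tends to $0$ uniformly on compact subsets of $\Rd$ as $R\to\infty$, uniformly in $n$ and in $t\in J$. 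Letting first $n\to\infty$ to handle the inner piece and then $R\to\infty$ to handle the outer piece concludes the proof.
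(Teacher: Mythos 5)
The overall scaffold of your part (i) argument --- compactness via interior Schauder estimates, then recovering the initial condition by passing through the Neumann approximants on a ball and triangulating --- is the same as the paper's Step 2. The genuine gap is the tail estimate
\[
|\vv_{n,R}-\uu_n|(\tau,x)\le e^{\lambda_J\tau}\|\f\|_\infty\,\boldsymbol\varphi_J(x)/\inf_{|y|=R}\boldsymbol\varphi_J(y),
\]
which you attribute to ``the existence proof of Proposition \ref{prop-2.6}''. That proof contains no such quantitative bound: it establishes convergence of the Neumann approximants only qualitatively, via interior Schauder estimates and Arzel\`a--Ascoli on compact subsets of $(s,\infty)\times\Rd$, with no rate at all, and a fortiori no rate uniform in the parameter $t_n$ indexing the coefficient family. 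If you want a barrier argument, you would have to build one yourself, and the stated form is also not quite right: for the system you have to pass to $z_k:=|u_{n,k}-v_{n,R,k}|^2$ to get a cooperative subsolution inequality for $D_\tau-\A^P-M{\rm Id}$ on $(0,\mathcal T)\times B_R$, so the resulting pointwise bound on $|\uu_n-\vv_{n,R}|$ carries a square root of $\boldsymbol\varphi_J/\inf_{|y|=R}\boldsymbol\varphi_J$, and the exponential rate involves $M_J$ as well as $\lambda_J$. There is also a point you pass over: the Neumann condition on $\vv_{n,R}$ gives \emph{no} control on $\partial(\uu_n-\vv_{n,R})/\partial\nu$ along $\partial B_R$; the comparison must be run as a Dirichlet-type maximum principle, using only the crude $L^\infty$ bound $|\uu_n-\vv_{n,R}|\le 2e^{M_J\tau}\|\f\|_\infty$ on the lateral boundary. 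None of this is automatic, and none of it appears in Proposition \ref{prop-2.6}.

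The paper avoids the tail estimate altogether. It argues by contradiction: supposing the convergence $\T^{\mathcal N,n}_t(\cdot)\f\to\T_t(\cdot)\f$ were not locally uniform in $t$, it extracts $t_n\to\widehat t$, uses Schauder estimates (Proposition \ref{prop:app_stime_varie}(iv), uniform in $t$ on compacts) to obtain a $C^{1,2}_{\rm loc}$-limit $\vv$ of $\T^{\mathcal N,m_n}_{t_n}(\cdot)\f$, and then identifies $\vv\equiv\T_{\widehat t}(\cdot)\f$. The initial condition for $\vv$ is recovered not by a barrier but by the localization $\vv_n:=\vartheta_M\,\T^{\mathcal N,m_n}_{t_n}(\cdot)\f$, the variation-of-constants formula for the Neumann problem on $B_{M+1}$, and the gradient estimate $\sqrt\tau\,\|\T^{\mathcal N,R}_t(\tau)\f\|_{C^1_b(B_{M+1};\Rm)}\le K\|\f\|_\infty$ (uniform in $t\in[s,T]$ and $R>M+2$), which makes the integral term $O(\sqrt\tau)$ uniformly in $n$. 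This is a cleaner way to get continuity of $\vv$ at $\tau=0$ than a quantitative tail bound, and it is what your proposal is missing. The same remarks apply to part (ii): the reduction via $|\T_t(\tau)\f_n-\T_t(\tau)\f|\le\T^P_t(\tau)|\f_n-\f|$ and the inner/outer splitting are sensible, but the outer bound again relies on a Lyapunov tightness estimate for $\T^P_t(\tau)$ that must be proved (and requires justifying the action of $\T^P_t(\tau)$ on the unbounded function $\boldsymbol\varphi_J$); the paper instead runs the same contradiction-plus-localization scheme.

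Finally, a smaller point: you assert the joint continuity of $(t,\tau,x)\mapsto(\T^{\mathcal N,R}_t(\tau)\f)(x)$ in one sentence. This is the content of Step 1 of the paper's proof, which needs the bound $\|\T^{\mathcal N,R}_t(\tau)\f\|\le e^{M^P_{\{t\}}\tau}\|\f\|$, an approximation by $C^2_b$ data, and a variation-of-constants estimate involving $\|\A(t)-\A(\overline t)\|_{0,B_R}$ and $\|\T^{\mathcal N,R}_{\overline t}(s)\f_n\|_{C^2_b}$; it is not an off-the-shelf fact about parabolic systems on bounded domains with $t$-dependent coefficients, and should at least be acknowledged as requiring an argument.
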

\begin{proof}
(i). For reader's convenience, we split the proof into two steps. In the former, we prove the statement for the semigroup $\T_t^{\mathcal N,R}(\tau)$ generated by the realization of $\A$ in $C_b(B_R;\Rm)$ with homogeneous Neumann boundary conditions on $\partial B_R$ for every $R>0$, with $\Rd$ replaced by $B_R$. In the latter we conclude.

{\em Step 1.} We fix $R>0$, $t\in I$ and prove that for every $\tau\in[0,\infty)$ it holds that
\begin{align}
\label{stima_neumann_smgr}
\|\T_t^{\mathcal N,R}(\tau)\f\|_{C_b(B_R;\Rm)}\leq e^{M_{\{t\}}^P\tau}\|\f\|_{C_b(B_R;\Rm)}.
\end{align}
For this purpose, we set $\ww:=(w_1,\ldots,w_m)$ with $w_j:=((\T_t^{\mathcal N,R}(\tau)\f)_j)^2-e^{-2M_{\{t\}}\tau}\|\f\|_{\infty}$ for $j=1,\ldots,m$. Arguing as in the uniqueness part of the proof of Proposition \ref{prop-2.6}, we deduce that function $\ww$ satisfies the system of inequalities
\begin{align*}
\left\{
\begin{array}{ll}
D_t\ww(\tau,x)-({\A}^P(t)+MId)\ww(\tau,x)\leq \bm 0, & (\tau,x)\in(0,\infty)\times B_R, \vspace{1mm}\\
\displaystyle \frac{\partial \ww}{\partial\nu}(\tau,x)= \bm 0, & (\tau,x)\in(0,\infty)\times \partial B_R, \vspace{1mm} \\
\ww(s,x)\leq \bm 0, & x\in \overline B_R.
\end{array}
\right.
\end{align*}
From \cite[Theorem 3.15]{Pr67} it follows that $\ww\le \bm 0$, which gives \eqref{stima_neumann_smgr}. Let $(\f_n)\subset C_b^{2}(\Rd;\Rm)$ converge to $\f$ locally uniformly on $\Rd$, let $\overline t\in I$ and let $J=\{t\in I:|t-\overline t|\leq 1\}$. Then, for every $t\in J$ we can estimate
\begin{align}
\|(\T_t^{\mathcal N,R}(\tau)\f)(x)-(\T_{\overline t}^{\mathcal N,R}(\tau)\f)(x)\|
\leq & \|(\T_t^{\mathcal N,R}(\tau)\f)(x)-(\T_t^{\mathcal N,R}(\tau)\f_n)(x)\|\notag \\
& + \|(\T_t^{\mathcal N,R}(\tau)\f_n)(x)-(\T_{\overline t}^{\mathcal N,R}(\tau)\f_n)(x)\| \notag \\
& + \|(\T_{\overline t}^{\mathcal N,R}(\tau)\f_n)(x)-(\T_{\overline t}^{\mathcal N,R}(\tau)\f)(x)\|
\label{step_1_neumann_1}
\end{align}
for every $(\tau,x)\in[0,\infty)\times B_R$. From \eqref{stima_neumann_smgr} we infer that
\begin{align}
& \|(\T_t^{\mathcal N,R}(\tau)\f)(x)-(\T_{t}^{\mathcal N,R}(\tau)\f_n)(x)\|
+ \|(\T_{\overline t}^{\mathcal N,R}(\tau)\f_n)(x)-(\T_{\overline t}^{\mathcal N,R}(\tau)\f)(x)\| \notag \\
\le & 2e^{M_J\tau}\|\f_n-\f\|_{C_b(B_R;\Rm)},
\label{step_1_neumann_2}
\end{align}
which vanishes as $n$ tends to $\infty$, locally uniformly with respect to $\tau$, since $\f_n$ converges to $\f$ as $n$ tends to $\infty$, locally uniformly on $\Rd$. Further, if we set $\vv_n=\T_t^{\mathcal N,R}(\cdot)\f_n-\T_{\overline t}^{\mathcal N,R}(\cdot)\f_n$ then we get
\begin{align*}
\left\{
\begin{array}{ll}
D_t\vv_n(\tau,x)=\A\vv_n(\tau,x)+(\A-\A(\overline t))\T_{\overline t}^{\mathcal N,R}(\tau)\f_n(x), & (\tau,x)\in(0,\infty)\times B_R, \vspace{1mm}\\
\displaystyle\frac{\partial\vv_n}{\partial\nu}(\tau,x)=\bm 0, & (\tau,x)\in(0,\infty)\times \partial B_R, \vspace{1mm} \\
\vv_n(0,x)=\bm 0, & x\in B_R.
\end{array}
\right.
\end{align*}
The variation-of-constants formula gives
\begin{align*}
\vv_n(\tau,x)=\int_0^\tau \T_{t}^{\mathcal N,R}(\tau-s)\big ((\A(t)-\A(\overline t))\T_{\overline t}^{\mathcal N,R}(s)\f_n\big )(x)ds, \quad (\tau,x)\in[0,\infty)\times B_R.
\end{align*}
Hence, for every $\mathcal T>0$ we get
\begin{align}
\sup_{(\tau,x)\in[0,\mathcal T]\times B_R}\|\vv_n(\tau,x)\|
\leq &\sup_{\tau\in[0,\mathcal T]}\int_0^\tau e^{M_{J}(\tau-s)}\|\A(t)-\A(\overline t)\|_{0,B_R}\| \|\T_{\overline t}^{\mathcal N,R}(s)\f_n\|_{C_b^2(B_R;\Rm)}ds \notag \\
\leq  & \widetilde C \|\f_n\|_{C^2_b(B_R;\Rm)}\|\A(t)-\A(\overline t)\|_{0,B_R},
\label{step_1_neumann_3}
\end{align}
where $\widetilde C$ is a positive constant, uniform with respect to $\tau$ and $t$ on bounded sets (see \cite[Section IV.2, Theorem 3.4]{Ei69}).

Let us fix $\varepsilon>0$, $\overline t\in I$ and let $J$ be as above. From \eqref{step_1_neumann_2} there exists $\overline n\in\N$ such that
\begin{align}
\label{step_1_neumann_4}
\|(\T_t^{\mathcal N,R}(\tau)\f)(x)-(\T_{t}^{\mathcal N,R}(\tau)\f_n)(x)\|
+ \|(\T_{\overline t}^{\mathcal N,R}(\tau)\f_n)(x)-(\T_{\overline t}^{\mathcal N,R}(\tau)\f)(x)\|\leq \frac{\varepsilon}{2},
\end{align}
for every $\N\ni n\geq \overline n$ and $t\in J$, locally uniformly with respect to $\tau\in[0,\infty)$ and uniformly with respect to $x\in B_R$. Since the coefficients of the operator $\A$ belong to $C^{\alpha/2,\alpha}_{\rm loc}(I\times \Rd)$, from \eqref{step_1_neumann_3} it follows that there exists $\delta>0$ such that
\begin{align}
\label{step_1_neumann_5}
\|(\T_t^{\mathcal N,R}(\tau)\f_{\overline n})(x)-(\T_{\overline t}^{\mathcal N,R}(\tau)\f_{\overline n})(x)\|
\leq \frac\varepsilon2, \quad {\rm if } \ |t-\overline t|\leq \delta,\ t\in J,
\end{align}
locally uniformly with respect to $\tau\in [0,\infty)$ and uniformly with respect to $x\in B_R$. Putting together \eqref{step_1_neumann_4} and \eqref{step_1_neumann_5}, from \eqref{step_1_neumann_1} it follows that for every $\varepsilon>0$ there exists $\delta>0$ such that
$\|(\T_t^{\mathcal N,R}(\tau)\f)(x)-(\T_{\overline t}^{\mathcal N,R}(\tau)\f)(x)\|\leq \varepsilon$ for every $t\in J$, such that $|t-\overline t|\leq \delta$,
locally uniformly with respect to $\tau\in[0,\infty)$, uniformly with respect to $x\in B_R$. Since $(\tau,x)\mapsto \T_{t}^{\mathcal N,R}(\tau)\f(x)$ is continuous in $(0,\infty)\times B_R$, pointwise with respect to $t$, the assertion follows.

{\it Step 2}. Here, we prove that, for every $\f\in C_b(\Rd;\Rm)$, $(\T_t^{\mathcal N,n}(\tau)\f)(x)$ converges to $ (\T_t(\tau)\f)(x)$ as $n$ tends to $\infty$ locally uniformly with respect to $(t,\tau,x)\in I\times (0,\infty)\times \Rd$. As a byproduct, from Step $1$ we obtain that the function $(t,\tau,x)\mapsto (\T_t(\tau)\f)(x)$ is continuous in $I\times (0,\infty)\times \Rd$. Let us fix $t\in I$ and observe that from the classical interior Schauder estimates it follows that,
for every compact set $E\in(0,\infty)\times \Rd$,
\begin{align}
\label{stima_interna_conv_totale}
\|\T_t^{\mathcal N,n}(\cdot)\f\|_{C^{1+\alpha/2,2+\alpha}(E;\Rm)}\leq C,
\end{align}
where $C$ is a positive constant which depends on $E$ and is locally uniform with respect to $t\in I$ (see Proposition \ref{prop:app_stime_varie}(iv)), but does not depend on $n$. Standard arguments (see e.g., \cite[Remark 2.5]{AngLor20}) imply that for every $t\in I$ the sequence $(\T_t^{\mathcal N,n}(\cdot)\f)$ converges to $\T_t(\cdot)\f$ in $C^{1,2}(E;\Rm)$, as $n$ tends to $\infty$, for every compact subset $E\subset(0,\infty)\times \Rd$. Hence, $(\T_t^{\mathcal N,n}(\tau)\f)(x)$ tends to $(\T_t(\tau)\f)(x)$ as $n$ tends to $\infty$, locally uniformly with respect to $(\tau,x)\in (0,\infty)\times \Rd$, pointwise with respect to $t\in I$. Let us assume by contradiction that such a convergence is not locally uniform with respect to $t$, i.e., there exist $[s,T]\subset I$, $[\varepsilon,\mathcal T]\subset (0,\infty)$ and $B_h\subset \Rd$ such that
\begin{align*}
\limsup_{n\rightarrow\infty}\sup_{t\in[s,T]}\|\T_t^{\mathcal N,n}(\cdot)\f-\T_t(\cdot)\f\|_{C_b([\varepsilon,\mathcal T]\times B_h)}\neq0.
\end{align*}
This implies that there exist $\varepsilon>0$, a sequence $(t_n)\subset [s,T]$ and an increasing sequence $(m_n)\subset \N$ such that
\begin{align}
\label{stima_contr_conv_totale}
\|\T_{t_n}^{\mathcal N,m_n}(\cdot)\f-\T_{t_n}(\cdot)\f\|_{C_b([\varepsilon,\mathcal T]\times B_h;\Rm)}\geq \varepsilon, \qquad\;\, n\in\N.
\end{align}
Let us notice that, up to a subsequence, $t_n$ converges to $\widehat t\in [s,T]$ as $n$ tends to $\infty$. Since the constant $C$ in \eqref{stima_interna_conv_totale} is uniform with respect to $t\in[s,T]$, it follows that for every compact set $E\subset (0,\infty)\times \Rd$ we can find a positive constant $C$ such that
\begin{align*}
\|\T_{t_n}^{\mathcal N,m_n}(\cdot)\f\|_{C^{1+\alpha/2,2+\alpha}(E;\Rm)}+\|\T_{t_n}(\cdot)\f\|_{C^{1+\alpha/2,2+\alpha}(E;\Rm)}\leq C, \qquad\;\, n\in\N.
\end{align*}
Hence, there exist $\vv,\ww\in C^{1+\alpha/2,2+\alpha}_{\rm loc}((0,\infty)\times \Rd;\Rm)$ such that, up to a subsequence, $\T_{t_n}^{\mathcal N,m_n}(\cdot)\f$ converges  to $\vv$ and $\T_{t_n}(\cdot)\f$ converges to $ \ww$ in $C^{1,2}(E;\Rm)$, as $n$ tends to $\infty$, and both $\vv$ and $\ww$ solve the equation $D_\tau\uu=\A(\widehat t)\uu$ on $(0,\infty)\times \Rd$.

Let us prove that $\vv\equiv\ww\equiv \T_{\widehat t}(\cdot)\f$ by means of a localization argument. Let us fix $M>0$, and let $\vartheta_M\in C_c^\infty(\Rd)$ satisfy the condition $\chi_{B_M}\leq \vartheta_M\leq \chi_{B_{M+1}}$. Let $\overline n\in\N$ be such that $m_n>M+1$ for every $n\geq \overline n$. For every  $n> \overline n$ the function $\vv_n:=\vartheta_M\T_{t_n}^{\mathcal N,m_n}(\cdot)\f$ satisfies the Neumann-Cauchy problem
\begin{align*}
\left\{
\begin{array}{ll}
D_\tau \vv_n(\tau,x)=\A(t_n)\vv_n(\tau,x)+\g_n(\tau,x), & (\tau,x)\in(0,\infty)\times B_{M+1}, \vspace{1mm}, \\
\displaystyle \frac{\partial \vv_n}{\partial\nu}(\tau,x)=\bm 0, & (\tau,x)\in (0,\infty)\times \partial B_{M+1}, \vspace{1mm} \\
\vv_n(0,x)=\vartheta_M(x)\f(x), & x\in B_{M+1},
\end{array}
\right.
\end{align*}
with
$g_{n,k}=-2\langle Q^k(t_n,\cdot)\nabla \vartheta_M,\nabla_{x} (\T_{t_n}^{\mathcal N,m_n}(\cdot)\f)_k\rangle -(\T_{t_n}^{\mathcal N,m_n}(\cdot)\f)_k{\mathcal A}_{k,0}(t_n)\vartheta_M$,
where ${\mathcal A}_{k,0}=\sum_{i,j=1}^dq_{ij}^kD^2_{ij}+\sum_{i=1}^d b_i^kD_i$, for every $k=1,\ldots,m$.
The variation-of-constants formula gives
\begin{align*}
\vv_n(\tau,x)=\T^{\mathcal N,M+1}_{t_n}(\tau)(\vartheta_M\f)(x)+\int_0^\tau\T_{t_n}^{\mathcal N,M+1}(\tau-s)(\g_n(s,\cdot))(x)ds
\end{align*}
for every $(\tau,x)\in(0,\infty)\times B_{M+1}$.
Since $\sqrt \tau\|\T^{\mathcal N,R}_{t}(\tau)\f\|_{C^1_b(B_{M+1};\Rm)}\leq K\|\f\|_{C_b(\Rd;\Rm)}$ for every $\tau\in(0,\mathcal T]$, some positive constant $K$ which is uniform with respect to $t\in[s,T]$ and $R>M+2$ (see Proposition \ref{prop:app_stime_varie}(iii) and \eqref{stima_neumann_smgr}), we get
\begin{align*}
\|(\T_{t_n}^{\mathcal N,m_n}(\tau)\f)(x)-\f(x)\|\leq \|(\T^{\mathcal N,M+1}_{t_n}(\tau)(\vartheta_M\f))(x)-\f(x)\|+K'\sqrt \tau\|\f\|_{C_b(\Rd;\Rm)}
\end{align*}
for every $(\tau,x)\in(0,\mathcal T]\times B_M$ and some positive constant $K'$, uniformly with respect to $(t,\tau)\in[s,T]\times (0,\mathcal T]$. Letting $n$ tend to $\infty$, we obtain that $\T_{t_n}^{\mathcal N,m_n}(\tau)\f(x)$ converges to $ \vv(\tau,x)$ and, from Step $1$, that $
\T^{\mathcal N,M+1}_{t_n}(\tau)(\vartheta_M\f)(x)$ converges to $\T^{\mathcal N,M+1}_{\widehat t}(\tau)(\vartheta_M\f)(x)$ for every $(\tau,x)\in(0,\mathcal T]\times B_M$. Hence,
\begin{align*}
\|\vv(\tau,x)-\f(x)\|\leq \|(\T^{\mathcal N,M+1}_{\widehat t}(\tau)(\vartheta_M\f))(x)-\f(x)\|+K'\sqrt \tau\|\f\|_{C_b(\Rd;\Rm)}
\end{align*}
for every $(\tau,x)\in(0,\mathcal T]\times B_M$,
and letting $\tau$ tends to $0^+$ we conclude that $\vv$ is continuous on $\{0\}\times B_M$. This means that $\vv$ is a classical solution to the Cauchy problem
\begin{align*}
\left\{
\begin{array}{ll}
D_t\uu(\tau,x)=\A(\widehat t)\uu(\tau,x), & (\tau,x)\in(0,\mathcal T]\times \Rd, \vspace{1mm} \\
\uu(0,x)=\f(x), & x\in\Rd,
\end{array}
\right.
\end{align*}
and so $\vv\equiv \T_{\widehat t}(\cdot)\f$. Similar arguments show that $\ww\equiv \T_{\widehat t}(\cdot)\f$. Letting $n$ tend to $\infty$ in \eqref{stima_contr_conv_totale} the assertion follows.

\medskip

(ii) We already know that, for every fixed $t\in I$, the sequence $\T_t(\cdot)\f_n$ converges to $\T_t(\cdot)\f$ locally uniformly on $[0,\infty)\times \Rd$ as $n$ tends to $\infty$ (see Proposition \ref{prop:conv_loc_unif}$(ii)$ and Remark \ref{rmk-claudio}). Let us assume by contradiction that there exist $[s,T]\subset I$, $[\varepsilon,\mathcal T]\subset (0,\infty)$ and $B_h\subset \Rd$ such that
\begin{align*}
\limsup_{n\rightarrow\infty}\sup_{t\in[s,T]}\|\T_t(\cdot)\f_n-\T_t(\cdot)\f\|_{C_b([\varepsilon,\mathcal T]\times B_h)}\neq0.
\end{align*}
This implies that there exist $\varepsilon>0$ , a sequence $(t_n)\subset [s,T]$ and an increasing and divergent sequence $(m_n)\subset \N$ such that
\begin{align}
\label{stima_contr_conv_totale_2}
\|\T_{t_n}(\cdot)\f_{m_n}-\T_{t_n}(\cdot)\f\|_{C_b([\varepsilon,\mathcal T]\times B_h;\Rm)}\geq \varepsilon, \qquad\;\, n\in\N.
\end{align}
Let us notice that, up to a subsequence, $t_n$ converges to $\widehat t\in [s,T]$ as $n$ tends to $\infty$. Arguing as in the proof of Step $2$ in (i), it follows that for every compact set $E\subset (0,\infty)\times \Rd$ we can find a positive constant $C$ such that
\begin{align*}
\|\T_{t_n}(\cdot)\f_{m_n}\|_{C^{1+\alpha/2,2+\alpha}(E;\Rm)}+\|\T_{t_n}(\cdot)\f\|_{C^{1+\alpha/2,2+\alpha}(E;\Rm)}\leq C, \qquad\;\, n\in\N.
\end{align*}
By applying a diagonal argument, we can find $\vv,\ww\in C^{1+\alpha/2,2+\alpha}_{\rm loc}((0,\infty)\times \Rd;\Rm)$ such that, up to a subsequence, $\T_{t_n}(\cdot)\f_{m_n}$ and $\T_{t_n}(\cdot)\f$ converge, respectively, to $\vv$ and to $ \ww$ in $C^{1,2}(E;\Rm)$, as $n$ tends to $\infty$, for every compact set $E\subset(0,\infty)\times\Rd$. Moreover, both $\vv$ and $\ww$ solve the equation $D_\tau{\bm z}=\A(\widehat t){\bm z}$ on $(0,\infty)\times \Rd$.

From property (i) we already know that $\ww\equiv\T_{\widehat t}(\cdot)\f$. Let us prove that $\vv\equiv \T_{\widehat t}(\cdot)\f$ by means of a localization argument. Let us fix $M>0$, and let $\vartheta_M\in C_c^\infty(\Rd)$ satisfy
the condition $\chi_{B_M}\leq \vartheta\leq \chi_{B_{M+1}}$. For every $n\in\N$ the function $\vv_n:=\vartheta_M\T_{t_n}(\cdot)\f_{m_n}$ satisfies the Neumann-Cauchy problem
\begin{align*}
\left\{
\begin{array}{ll}
D_\tau \vv_n(\tau,x)=\A(t_n)\vv_n(\tau,x)+\g_n(\tau,x), & (\tau,x)\in(0,\infty)\times B_{M+1}, \vspace{1mm}, \\
\displaystyle \frac{\partial \vv_n}{\partial\nu}(\tau,x)=\bm 0, & (\tau,x)\in (0,\infty)\times \partial B_{M+1}, \vspace{1mm} \\
\vv_n(0,x)=\vartheta_M(x)\f_{m_n}(x), & x\in B_{M+1},
\end{array}
\right.
\end{align*}
with the function $\g_n$ being defined as in Step 1, and with $\T_{t_n}^{\mathcal N, an m_n}(\cdot)\f$ being replaced by the function $\T_{t_n}(\cdot)\f_{m_n}$.
The variation-of-constants formula gives
\begin{align*}
\vv_n(\tau,x)=\T^{\mathcal N,M+1}_{t_n}(\tau)(\vartheta_M\f_{m_n})(x)+\int_0^\tau\T_{t_n}^{\mathcal N,M+1}(\tau-s)(\g_n(s,\cdot))(x)ds
\end{align*}
for each $(\tau,x)\in(0,\infty)\times B_{M+1}$.
Since $\sqrt \tau\|\T_{t}(\tau)\f_{m_n}\|_{C^1_b(B_{M+1};\Rm)}\leq K\sup_{n\in\N}\|\f_n\|_{\infty}$ for every $\tau\in(0,\mathcal T]$ and some positive constant $K$, which is uniform with respect to $t\in[s,T]$ (see Proposition \ref{prop:app_stime_varie}(iii)), it follows that
\begin{align*}
\|(\T_{t_n}(\tau)\f)(x)-\f(x)\|\leq \|(\T^{\mathcal N,M+1}_{t_n}(\tau)(\vartheta_M\f_{m_n}))(x)-\f(x)|+K'\sqrt \tau\sup_{n\in\N}\|\f_n\|_{C_b(\Rd;\Rm)},
\end{align*}
for every $(\tau,x)\in(0,\mathcal T]\times B_M$ and a constant $K'>0$, uniformly with respect to $n\in\N$. 

We claim that $\T^{\mathcal N,M+1}_{t_n}(\cdot)(\vartheta_M\f_{m_n})$ converges to $ \T^{\mathcal N,M+1}_{\widehat t}(\cdot)(\vartheta_M\f)$ as $n$ tends to $\infty$, uniformly in $(0,\mathcal T]\times B_{M+1}$. Once the claim is proved, letting $n$ tend $\infty$ and then $\tau$ tend to $0^+$, we get that $\uu$ is continuous on $\{0\}\times B_M$ for every $M>0$, and so it is a classical solution to
\begin{align*}
\left\{
\begin{array}{ll}
D_t\vv(\tau,x)=\A(\widehat t)\vv(\tau,x), & (\tau,x)\in(0,T]\times \Rd, \vspace{1mm} \\
\vv(0,x)=\f(x), & x\in\Rd.
\end{array}
\right.
\end{align*}
This means that $\vv\equiv \T_{\widehat t}(\cdot)\f$. Letting $n$ tend to $\infty$ in \eqref{stima_contr_conv_totale_2} we get a contradiction, and this yields the assertion.

It remains to prove the claim. From \eqref{stima_neumann_smgr} it follows that
\begin{align*}
& \|(\T^{\mathcal N,M+1}_{t_n}(\tau)(\vartheta_M\f_{m_n}))(x)- (\T^{\mathcal N,M+1}_{\widehat t}(\tau)(\vartheta_M\f))(x)\|\\
\leq & \|(\T^{\mathcal N,M+1}_{t_n}(\tau)(\vartheta_M\f_{m_n}))(x)- (\T^{\mathcal N,M+1}_{t_n}(\tau)(\vartheta_M\f))(x)\| \\
& +\|(\T^{\mathcal N,M+1}_{t_n}(\tau)(\vartheta_M\f))(x)- (\T^{\mathcal N,M+1}_{\widehat t}(\tau)(\vartheta_M\f))(x)\| \\
\leq & (e^{M_{[s,T]}^P\mathcal T}\vee 1)\|\vartheta_M \f_{m_n}-\vartheta_M\f\|_{C_b(B_{M+1};\Rm)} \\
& +\|(\T^{\mathcal N,M+1}_{t_n}(\tau)(\vartheta_M\f))(x)- (\T^{\mathcal N,M+1}_{\widehat t}(\tau)(\vartheta_M\f))(x)\| \\
=:& I_1(n)+I_2(n,\tau,x).
\end{align*}
Since $\f_n$ converges to $\f$ locally uniformly as $n$ tends to $\infty$, we infer that $I_1(n)$ converges to $0$. Further, from Step $1$ in the proof of property (i), it follows that $I_2(n,\tau,x)$ vanishes uniformly with respect to $(\tau,x)\in[0,\infty)\times B_{M+1}$. The claim is proved.
\end{proof}

\subsection{Schauder estimates}

In this subsection we prove optimal Schauder estimates for the solutions to the elliptic system
\begin{equation}
\lambda\uu(t,x)-\A(t)\uu(t,x)=\f(t,x), \qquad\;\, t\in [s,T],\;\, x\in \Rd
\label{ellipt}
\end{equation}
and to the Cauchy problem
\begin{equation}
\left\{
\begin{array}{lll}
D_t\uu(t,x)=\A(t)\uu(t,x)+\g(t,x), &t\in [s,T], &x\in\Rd,\\[1mm]
\uu(s,x)=\f(x), &&x\in\Rd,
\end{array}
\right.
\label{parab}
\end{equation}

At first we characterize the maximal domain of the realization of $\A(t)$ in $C_b(\Rd;\Rm)$ for every $t\in I$. To this aim, we fix $t\in I$ and for each $\lambda>M_{\{t\}}^P$ we consider the operator $\RR_t(\lambda):C_b(\Rd;\Rm)\to C_b(\Rd;\Rm)$ defined by
\begin{align}
\label{resolvent_operator_t}
\RR_t(\lambda)\f(x):=\int_0^\infty e^{-\lambda \tau}(\T_t(\tau)\f)(x)d\tau, \qquad\;\, \f\in C_b(\Rd;\Rm).
\end{align}
Thanks to \eqref{stima_uni_gen_auto},  $\RR_t(\lambda)$ is well defined and $\|\RR_t(\lambda)\|_{L(C_b(\Rd;\Rm))}\leq (\lambda-M_{\{t\}}^P)^{-1}$. Since the family $\{\RR_t(\lambda): \lambda>M_{\{t\}}^P\}$ satisfies the resolvent identity, there exists a closed operator $\B(t):D(\B(t))\subset C_b(\Rd;\Rm)\rightarrow C_b(\Rd;\Rm)$ such that $\RR_t(\lambda)=(\lambda-\B(t))^{-1}$ for each $\lambda >M_{\{t\}}^P$. Now, we prove that $\B(t)$ is a suitable realization of $\A(t)$ in $C_b(\Rd;\Rm)$, but to this aim we need a maximum principle for Problem \eqref{ellipt}.

\begin{prop}
\label{prop:ell_max_princ}
Fix $t\in I$, $\lambda>M_{\{t\}}^P\vee \lambda_{\{t\}}$ $($see Hypothesis $\ref{hyp-base}(iii))$. Suppose that $\uu\in C_b(\Rd;\Rm)\cap \bigcap_{p>1}W^{2,p}_{\rm loc}(\Rd;\Rm)$ is such that $\A^P(t)\uu\in
C(\Rd;\Rm)$ and satisfies the inequality $\lambda \uu-\A^P(t)\uu\leq \bm 0$. Then, $\uu\leq\bm
 0$ on $\Rd$.
\end{prop}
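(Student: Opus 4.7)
The plan is to exploit the cooperative structure of $\A^P(t)$---the fact that the off-diagonal entries $c^P_{kh}$, $h\neq k$, are nonnegative---together with the Lyapunov function $\boldsymbol\varphi:=\boldsymbol\varphi_{\{t\}}$ provided by Hypothesis $\ref{hyp-base}(iii)$, in order to run a single maximum-point argument simultaneously over all components and over the whole of $\Rd$. First, I introduce the shifted function $\vv_\varepsilon:=\uu-\varepsilon\boldsymbol\varphi$ for $\varepsilon>0$. Since $\uu$ is bounded and each component of $\boldsymbol\varphi$ blows up as $|x|\to\infty$, each component $v_{\varepsilon,k}$ tends to $-\infty$ at infinity, so its supremum is attained on a compact subset of $\Rd$. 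Combining the assumption $\lambda\uu-\A^P(t)\uu\le\bm 0$ with $\A^P(t)\boldsymbol\varphi\le \lambda_{\{t\}}\boldsymbol\varphi$ and the strict inequality $\lambda>\lambda_{\{t\}}$ yields the componentwise strict differential inequality
\begin{align*}
\lambda\vv_\varepsilon-\A^P(t)\vv_\varepsilon\le -\varepsilon(\lambda-\lambda_{\{t\}})\boldsymbol\varphi<\bm 0 \quad\textrm{on }\Rd.
\end{align*}

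Next, I would argue by contradiction that $\vv_\varepsilon\le\bm 0$ on $\Rd$. If $M:=\max_{j}\sup_{\Rd}v_{\varepsilon,j}>0$, then, by the behaviour at infinity, $M$ is attained at some index $k_0\in\{1,\ldots,m\}$ and some point $x_0\in\Rd$, with $v_{\varepsilon,j}(x_0)\le M$ for every $j$. At such a point one has $\nabla v_{\varepsilon,k_0}(x_0)=0$ and a nonpositive Hessian, so the diffusion and drift contributions to $(\A^P(t)\vv_\varepsilon)_{k_0}(x_0)$ are nonpositive and only the potential term survives. The two crucial positivity ingredients---$M>0$ and $c^P_{k_0 j}(t,x_0)\ge 0$ for $j\neq k_0$---then allow me to bound every summand from above by $c^P_{k_0 j}(t,x_0)M$, giving
\begin{align*}
(\A^P(t)\vv_\varepsilon)_{k_0}(x_0)\le \sum_{j=1}^m c^P_{k_0 j}(t,x_0)v_{\varepsilon,j}(x_0)\le M_{k_0}(t,x_0)M\le M_{\{t\}}^P M,
\end{align*}
the last inequality being \eqref{def_M_T}. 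Evaluating the strict inequality at $(k_0,x_0)$ therefore forces
\begin{align*}
0<(\lambda-M_{\{t\}}^P)M\le \lambda v_{\varepsilon,k_0}(x_0)-(\A^P(t)\vv_\varepsilon)_{k_0}(x_0)<0,
\end{align*}
a contradiction. Hence $\vv_\varepsilon\le\bm 0$ on $\Rd$ for every $\varepsilon>0$, and letting $\varepsilon\to 0^+$ yields the claim $\uu\le\bm 0$.

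The main obstacle I anticipate is making the pointwise evaluation of $(\A^P(t)\vv_\varepsilon)_{k_0}$ at $x_0$ rigorous under the merely Sobolev regularity $\uu\in\bigcap_{p>1}W^{2,p}_{\rm loc}$: the naive ``$\nabla v_{\varepsilon,k_0}(x_0)=0$, $D^2 v_{\varepsilon,k_0}(x_0)\le 0$'' arithmetic is not legitimate pointwise. What rescues the argument is precisely the hypothesis that $\A^P(t)\uu$ is continuous, which opens the door to a Bony/Lions-type strong maximum principle: either one selects a sequence $x_n\to x_0$ of twice-differentiability points along which $\nabla v_{\varepsilon,k_0}(x_n)\to 0$ and $\limsup_n D^2 v_{\varepsilon,k_0}(x_n)\le 0$, passing to the limit inside the continuous function $\A^P(t)\vv_\varepsilon$; or one mollifies $v_{\varepsilon,k_0}$ near $x_0$, applies the classical pointwise argument to the approximants, and passes to the limit exploiting continuity. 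Once this analytic step is granted, the rest of the argument is purely algebraic and hinges only on the cooperative sign pattern of $C^P$ and on the two strict inequalities $\lambda>\lambda_{\{t\}}$ and $\lambda>M_{\{t\}}^P$.
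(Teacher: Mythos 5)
Your proof is correct and follows essentially the same route as the paper: the auxiliary function $\uu-\varepsilon\boldsymbol\varphi_{\{t\}}$ (the paper writes $\uu-n^{-1}\boldsymbol\varphi_{\{t\}}$), the strict differential inequality obtained from $\lambda>\lambda_{\{t\}}$, the maximum-point argument over the index attaining the largest component maximum, the cooperativity of $C^P$ to bound the zero-order term by $M^P_{\{t\}}$ times the maximum value, and the passage $\varepsilon\to0^+$. The analytic step you flag as the main obstacle is exactly the one the paper resolves by invoking \cite[Theorem 3.1.10]{Lun95} (a pointwise second-derivative/maximum principle for $W^{2,p}_{\rm loc}$ functions whose $\mathcal{A}_{k,0}$-image is continuous), which is the precise form of the Bony-type result you describe.
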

\begin{proof}
Let $t$ and $\lambda$ be as in the statement and introduce the function $\vv_n=\uu-n^{-1}\boldsymbol\varphi_{\{t\}}$, where $\boldsymbol \varphi_{\{t\}}$ is the function introduced in Hypothesis \ref{hyp-base}(iii). The choice of $\lambda$ and Hypothesis \ref{hyp-base}(iii) implies that
\begin{align}
\label{max_prin_ell_neg}
\lambda \vv_n(x)-(\A^P(t)\vv_n)(x)
<\bm 0,\qquad\;\,x\in\Rd.
\end{align}
Fix $n\in\N$. Since $\uu$ is bounded on $\Rd$ and $\bm\varphi_{\{t\}}(x)$ blows up, componentwise, as $|x|$ tends to $\infty$, for each $k\in \{1,\ldots,m\}$ the function $v_{n,k}$ attains its maximum value at some point $x_{n,k}\in\Rd$. Let $\overline k$ be such that
$v_{n,\overline k}(x_{n,\overline k})=\max_{j=1,\ldots,m}v_{n,j}(x_{n,j})$
and assume that $v_{n,\overline k}(x_{n,\overline k})>0$. Since ${\mathcal A}_{\overline k,0}(t)v_{n,\overline k}=(\A^P(t)\vv_n)_{\overline k}-(C^P(t,\cdot)\vv_n)_{\overline k}$ is a continuous function in $\Rd$, we can apply \cite[Theorem 3.1.10]{Lun95} and infer that ${\mathcal A}_{\overline k,0}(t)v_{\overline k}(x_{n,\overline k})\leq 0$. Thus, taking also into account that $(C^P(t,x_{n,\overline k})\vv(x_{n,\overline k}))_{\overline k}
\le \sum_{j=1}^mc^P_{\overline k,j}v_{n,\overline k}(x_{n,\overline k})$, we can estimate
$\lambda v_{n,\overline k}(x_{n,\overline k})-(\A^P(t)\vv_n)_{\overline k}(x_{n,\overline k})
\geq  (\lambda-M_{\{t\}}^P)v_{n,\overline k}(x_{n,\overline k})>0$, which contradicts \eqref{max_prin_ell_neg}. It follows that $\vv_n\le\bm 0$ on $\Rd$ for every $n\in\N$. Letting $n$ tend to $\infty$ we get $\uu\leq \bm 0$ on $\Rd$.
\end{proof}

\begin{prop}
\label{prop:carat_dom_At}
For every $t\in I$ it holds that
\begin{align*}
D(\B(t))=\bigg\{\f\in C_b(\Rd;\Rm)\cap\bigcap_{p>1}W^{2,p}_{\rm loc}(\Rd;\Rm):\A(t)\f\in C_b(\Rd;\Rm)\bigg\},
\end{align*}
and $\B(t)\f=\A(t)\f$ for every $\f\in D(\B(t))$. Further, $D(\B(t))$ is continuously embedded in $C_b^\theta(\Rd;\Rm)$ for every $\theta\in(0,2)$ and there exists a positive constant $C(\theta)$ such that
\begin{align*}
\|\f\|_{C_b^\theta(\Rd;\Rm)}\leq C(\theta)\|\f\|_\infty^{1-\theta/2}\|\f\|_{D(\B(t))}^{\theta/2}, \qquad\;\, \f\in D(\B(t)).
\end{align*}
\end{prop}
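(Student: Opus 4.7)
I would prove the two inclusions defining $D(\B(t))$ separately, together with the identification $\B(t)\f=\A(t)\f$, and then derive the H\"older embedding with the interpolation estimate by exploiting the resolvent formula \eqref{resolvent_operator_t} and the corollary to Theorem \ref{thm-3.4}.

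\emph{Inclusion ``$\subseteq$'' and the identity $\B(t)\f=\A(t)\f$.} Fix $\f\in D(\B(t))$, write $\g:=\lambda\f-\B(t)\f\in C_b(\Rd;\Rm)$ for $\lambda>M_{\{t\}}^P$, so that $\f=\RR_t(\lambda)\g$. For $n\in\N$, set $\uu_n(x):=\int_0^n e^{-\lambda\tau}(\T_t(\tau)\g)(x)d\tau$. Since $\T_t(\tau)\g$ is the classical solution to $D_\tau\uu=\A(t)\uu$ with datum $\g$, differentiating under the integral (which is justified for $\tau$ bounded away from $0$ thanks to the interior Schauder estimates from Proposition \ref{pro:appr_sol_neumann}(iii)) and integrating by parts yield
\begin{align*}
\A(t)\uu_n=\int_0^n e^{-\lambda\tau}D_\tau\T_t(\tau)\g\,d\tau=e^{-\lambda n}\T_t(n)\g-\g+\lambda\uu_n.
\end{align*}
For $\lambda$ sufficiently large the first term vanishes uniformly as $n\to\infty$ by \eqref{stima_uni_gen_auto}, while the tail $\int_n^\infty e^{-\lambda\tau}\T_t(\tau)\g\,d\tau$ tends to $\bm 0$ in $C^2_{\rm loc}(\Rd;\Rm)$ thanks to Proposition \ref{pro:appr_sol_neumann}(iii) combined with the exponential bound on $\|\T_t(\tau)\g\|_\infty$. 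Hence $\uu_n\to\f$ uniformly and in $C^2_{\rm loc}$, so $\f\in C^2_{\rm loc}(\Rd;\Rm)\subset\bigcap_{p>1}W^{2,p}_{\rm loc}(\Rd;\Rm)$ with $\A(t)\f=\lambda\f-\g=\B(t)\f\in C_b(\Rd;\Rm)$.

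\emph{Inclusion ``$\supseteq$''.} Let $\f$ belong to the right-hand side and choose $\lambda$ sufficiently large. Set $\g:=\lambda\f-\A(t)\f\in C_b(\Rd;\Rm)$ and $\vv:=\f-\RR_t(\lambda)\g$. By the first step, $\RR_t(\lambda)\g\in\bigcap_{p>1}W^{2,p}_{\rm loc}(\Rd;\Rm)$ with $\A(t)\RR_t(\lambda)\g=\lambda\RR_t(\lambda)\g-\g$, so $\vv\in C_b(\Rd;\Rm)\cap\bigcap_{p>1}W^{2,p}_{\rm loc}(\Rd;\Rm)$ satisfies $\lambda\vv-\A(t)\vv=\bm 0$ in $\Rd$. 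Setting $w_k:=v_k^2$ and repeating componentwise the pointwise computation carried out in the uniqueness part of the proof of Proposition \ref{prop-2.6}(ii) applied to $(\A(t)\vv)_k=\lambda v_k$, we obtain
\begin{align*}
(2\lambda-M)\bm w-\A^P(t)\bm w\leq\bm 0\qquad\text{in }\Rd,
\end{align*}
where the Sobolev embedding $W^{2,p}_{\rm loc}\hookrightarrow C^{1,\alpha}_{\rm loc}$ (for $p$ large) together with the assumption $\A(t)\f\in C_b$ ensures that $\A^P(t)\bm w$ is continuous, so that Proposition \ref{prop:ell_max_princ} applies with $2\lambda-M$ in place of $\lambda$. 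Taking $\lambda$ large enough so that $2\lambda-M>M_{\{t\}}^P\vee\lambda_{\{t\}}$ gives $\bm w\leq\bm 0$; since $\bm w\geq\bm 0$ by construction, we conclude $\vv\equiv\bm 0$, i.e., $\f=\RR_t(\lambda)\g\in D(\B(t))$, and $\B(t)\f=\A(t)\f$.

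\emph{H\"older embedding and interpolation inequality.} Fix $\theta\in(0,2)$, $\omega>0$, and let $\overline M=2^{-1}(1+M_{\{t\}}+2(M_{\{t\}})^+)$. For $\f\in D(\B(t))$ and $\lambda>\omega+\overline M$, the identity $\f=\RR_t(\lambda)(\lambda\f-\B(t)\f)$ combined with estimate \eqref{stima-12} applied with $\beta=0$ yields
\begin{align*}
\|\f\|_{C^\theta_b(\Rd;\Rm)}\leq K\int_0^\infty e^{-(\lambda-\omega-\overline M)\tau}\tau^{-\theta/2}d\tau\cdot\|\lambda\f-\B(t)\f\|_\infty\leq C_\theta(\lambda-\omega-\overline M)^{\theta/2-1}\bigl(\lambda\|\f\|_\infty+\|\B(t)\f\|_\infty\bigr).
\end{align*}
Optimizing over $\lambda>\omega+\overline M$, specifically choosing $\lambda-\omega-\overline M$ proportional to $1+\|\B(t)\f\|_\infty/\|\f\|_\infty$, the right-hand side is bounded by $C(\theta)\|\f\|_\infty^{1-\theta/2}(\|\f\|_\infty+\|\B(t)\f\|_\infty)^{\theta/2}=C(\theta)\|\f\|_\infty^{1-\theta/2}\|\f\|_{D(\B(t))}^{\theta/2}$, which is the desired inequality, and in particular entails the continuous embedding $D(\B(t))\hookrightarrow C_b^\theta(\Rd;\Rm)$.

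\emph{Main obstacle.} The technical core of the argument is the reverse inclusion: to apply Proposition \ref{prop:ell_max_princ} to $\bm w=(v_k^2)$ one must justify that $\A^P(t)\bm w$ is continuous on $\Rd$, which is not automatic from $\vv\in\bigcap_{p}W^{2,p}_{\rm loc}$. The right way to do it is to rewrite the principal part of $\A^P(t)\bm w$ through the algebraic identity $Q^k:D^2w_k=2v_k\mathcal A_{k,0}(t)v_k-2v_k\langle b^k,\nabla v_k\rangle+2\langle Q^k\nabla v_k,\nabla v_k\rangle$ and then exploit the hypothesis $\A(t)\f\in C_b$ to deduce that each $\mathcal A_{k,0}(t)v_k$ is continuous; the remaining lower-order terms are continuous thanks to the Sobolev embedding $W^{2,p}_{\rm loc}\hookrightarrow C^{1,\alpha}_{\rm loc}$ for $p$ large.
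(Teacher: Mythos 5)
Your treatment of the reverse inclusion ``$\supseteq$'' (passing to $\bm w=(v_k^2)$, re-using the pointwise computation from the uniqueness part of Proposition \ref{prop-2.6}(ii), and then invoking Proposition \ref{prop:ell_max_princ}) and of the H\"older interpolation inequality (Laplace transform representation of $\f$, estimate \eqref{stima-12} with $\beta=0$, optimisation over $\lambda$) follow the paper's proof closely and are essentially correct. The genuine gap is in the forward inclusion ``$\subseteq$''.

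You define the approximants as $\uu_n=\int_0^n e^{-\lambda\tau}\T_t(\tau)\g\,d\tau$ and claim that they, and hence $\f$, lie in $C^2_{\rm loc}(\Rd;\Rm)$. Neither claim is justified. For $\g$ merely bounded and continuous, $\|D^2_x\T_t(\tau)\g\|_{L^\infty(K)}$ behaves like $\tau^{-1}$ as $\tau\to0^+$ on every compact $K\subset\Rd$ (this is \eqref{stima_der_smgr_totale} with $h=0$, $k=2$), and $\tau^{-1}$ is \emph{not} integrable near $0$; you therefore cannot pass $D^2_x$ under the integral sign on $[0,n]$, so $\uu_n$ is not a priori of class $C^2$ and the identity $\A(t)\uu_n=e^{-\lambda n}\T_t(n)\g-\g+\lambda\uu_n$ does not make classical sense. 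Moreover, even granting the identity, the limit $\f=\RR_t(\lambda)\g$ is in general \emph{not} $C^2_{\rm loc}$ when $\g$ is only continuous: for a second-order elliptic operator with continuous right-hand side the solution has second derivatives in $L^p_{\rm loc}$ but they may fail to be continuous. The correct regularity is precisely $W^{2,p}_{\rm loc}$, which is what the Proposition claims. The paper avoids both problems by truncating on both sides, $\uu_n=\int_{1/n}^n e^{-\lambda\tau}\T_t(\tau)\g\,d\tau$ (so that differentiation under the integral is unproblematic), observing that $\A(t)\uu_n=\lambda\uu_n+e^{-\lambda n}\T_t(n)\g-e^{-\lambda/n}\T_t(1/n)\g$ is \emph{bounded} in $C_b(\Rd;\Rm)$ uniformly in $n$, and then invoking the classical $L^p$ interior elliptic estimates componentwise to obtain a uniform $W^{2,p}(B_R)$ bound for $\uu_n$; passing to the limit as in \cite[Proposition 3.1]{DeLo11} then yields $\f\in\bigcap_{p>1}W^{2,p}_{\rm loc}(\Rd;\Rm)$ and $\A(t)\f=\lambda\f-\g\in C_b(\Rd;\Rm)$ a.e. You should replace the $C^2_{\rm loc}$ convergence argument with this $L^p$ machinery.
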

\begin{proof}
In the proof we separately provide the two inclusions, following the lines of \cite[Proposition 3.1]{DeLo11}.

{``$ \subset$''}.
Let $\uu\in D(\B(t))$ be such that $\uu=R(\lambda,\B(t))\f$ for some $\f\in C_b(\Rd;\Rm)$ and $\lambda>M_{\{t\}}^P$. For every $n\in\N$ we set
\begin{align*}
\uu_n(x):=\int_{1/n}^ne^{-\lambda \tau}(\T_t(\tau)\f)(x) d\tau , \qquad\;\, x\in\Rd.
\end{align*}
From \eqref{stima_uni_gen_auto} it follows that $\uu_n$ converges to $\uu$ in $C_b(\Rd;\Rm)$ as $n$ tends to $\infty$. Further, the spatial regularity of $\T_t(\tau)\f$ allows us to apply the operator $\A(t)$ to $\uu_n$, which gives
$\A(t)\uu_n
= \lambda \uu_n+e^{-\lambda n}\T_t(n)\f-e^{-\lambda/n}\T_t(1/n)\f$, in $\Rd$ for every $n\in\N$. This means that the sequence $(\A(t)\uu_n)$ is bounded in $C_b(\Rd;\Rm)$. By applying the classical $L^p$-interior estimates to each component $u_{n,k}$ of the function $\uu$, it follows that
\begin{align*}
\|u_{n,k}\|_{L^p(B_R)}
\leq & K\left(\|u_{n,k}\|_{L^p(B_{2R})}+\|\mathcal A_{k,0}(t)u_{n,k}\|_{L^p(B_{2R})}\right) \\
\leq & K\left(\|u_{n,k}\|_{L^p(B_{2R})}+\|(\A(t)\uu_{n})_k\|_{L^p(B_{2R};\Rm)}+\|(C\uu_n)_k\|_{L^p(B_{2R};\Rm)}\right)
\end{align*}
and the last side of the previous chain of inequalities is independent of $n$. Arguing as in the proof of \cite[Proposition 3.1]{DeLo11} we infer that $\uu\in W^{2,p}_{\rm loc}(\Rd;\Rm)$ for every $p\in[1,\infty)$, that $\A(t)\uu\in C_b(\Rd;\Rm)$ and that $\B(t)\uu=\A(t)\uu$ a.e. in $\Rd$.

``$ \supset$''. Let $\lambda\in \R$ be such that $2\lambda-M_{\{t\}}^P>M_{\{t\}}^P\vee \lambda_{\{t\}}$. We claim that if $\uu\in C_b(\Rd;\Rm)\cap\bigcap_{p>1}W^{2,p}_{\rm loc}(\Rd;\Rm)$ with $\A(t)\uu\in C_b(\Rd;\Rm)$ satisfies the equation $\lambda \uu-\A(t)\uu=\bm 0$ then $\uu=\bm 0$. Assume that the claim is true, and let $\f\in C_b(\Rd;\Rm)\cap \bigcap _{p>1}W^{2,p}_{\rm loc}(\Rd;\Rm)$ be such that $\A(t)\f\in C_b(\Rd;\Rm)$. Set $\boldsymbol \phi:=\lambda \f-\A(t)\f$ and $\g=R(\lambda, \B(t))\boldsymbol \phi\in D(\B(t))$. From the proof of the inclusion ``$\subset$'' it follows that $\lambda(\f-\g)-\A(t)(\f-\g)=\bm 0$, and the claim gives $\f=\g$, i.e., $\f\in D(\B(t))$.

To prove the claim, we fix $\uu$ as above and define the function $\ww$ by setting $w_k:=u_k^2$ for every $k\in\{1,\ldots,m\}$. It follows that
$2\lambda w_k
=2u_k(\A(t)\uu)_k
\leq \mathcal A_{k,0}(t)w_k+2u_k(C(t,\cdot)\uu)_k$
for $k=1,\ldots,m$.
Arguing as in the proof of Proposition \ref{prop-2.6}(ii) (uniqueness part), we infer that
$\mathcal A_{k,0}(t)w_k+2u_k(C(t,\cdot)\uu)_k
\leq (\A^P(t)\ww)_k+M_{\{t\}}^Pw_k$.
By combining these two last estimates we infer that
$(2\lambda-M_{\{t\}}^P)w_k-(\A^P(t)\ww)_k\leq 0$ for $k=1,\ldots,m$.
Since $2\lambda-M_{\{t\}}^P>M_{\{t\}}^P\vee \lambda_{\{t\}}$ and $\A^P(t)\ww\in C(\Rd;\Rm)$, from Proposition \ref{prop:ell_max_princ} it follows that $\ww\leq\bm 0$ on $\Rd$, which implies $\uu=\bm 0$ on $\Rd$. 

Let us sketch the proof of the last part of the statement. Fix $\bm 0\neq \f\in D(\B(t))$,  $\lambda> 1+2(M_{\{t\}}^P)^+$, and let $\bm \phi:=\lambda\f-\A(t)\f$. From \eqref{resolvent_operator_t} it follows that
$\f=\int_0^\infty e^{-\lambda \tau}\T_t(\tau){\bm \phi}d\tau$.
From estimate \eqref{stima-12}, we get
\begin{align*}
\|\f\|_{C^\theta_b(\Rd;\Rm)}
\leq & \frac{\Gamma(1-\theta/2)}{(\lambda-M_{\{t\}}^P)^{1-\theta/2}}\|\bm \phi\|_{C_b(\Rd;\Rm)} \\
\leq & \frac{\Gamma(1-\theta/2)}{(\lambda-M_{\{t\}}^P)^{1-\theta/2}}\big (\lambda \|\f\|_{C_b(\Rd;\Rm)}+\|\A(t)\f\|_{C_b(\Rd;\Rm)}\big ),
\end{align*}
and the assertion follows by minimizing with respect to $\lambda>1+2(M_{\{t\}}^P)^+$.
\end{proof}

To state the main result of this subsection. for $\theta\in (0,1)$ we denote by $C^{0,\theta}_b([s,T]\times\Rd;\R^m)$ the set of functions $\f\in C_b([s,T]\times\Rd;\R^m)$ such that $\sup_{t\in[s,T]}\|\f(t,\cdot)\|_{C^{\theta}_b(\Rd;\R^m)}<\infty$ and by
$C^{0,2+\theta}_b([s,T]\times\Rd;\R^m)$  the set of functions $\f\in C_b([s,T]\times\Rd;\R^m)$ which are twice continuously differentiable in $[s,T]\times\Rd$ with respect to the spatial variables, with $\sup_{t\in [s,T]}\|D_{ij}\f(t,\cdot)\|_{C^{\theta}_b(\Rd;\R^m)}<\infty$ for every $i,j=1,\ldots,d$.

\begin{thm}
\label{teo-Schau-est}
Fix $T>s\in I $. The following properties are satisfied.
\begin{enumerate}[\rm (i)]
\item
Let {$\f\in C_b^{0,\theta}([s,T]\times \Rd;\Rm)$} for some $\theta\in(0,1)$. Then, for every $\lambda>M_{[s,T]}^P$, the elliptic equation \eqref{ellipt} admits a unique solution $\uu\in C_b^{0,2+\theta}([s,T]\times \Rd;\Rm)$ and there exists a positive constant $C=C(\lambda,\theta)$ such that
\begin{equation}
\sup_{t\in[s,T]}\|\uu(t,\cdot)\|_{C^{2+\theta}_b(\Rd;\Rm)}\le{C\sup_{t\in[s,T]}\|\f(t,\cdot)\|_{C_b^\theta(\Rd;\Rm)}}.
\label{stima-112}
\end{equation}
\item Let $\f\in C^{2+\theta}_b(\Rd;\Rm)$ and $\g\in C^{0,\theta}_b([s,T]\times\Rd;\Rm)$ for some $\theta\in (0,1)$. Then, the Cauchy problem \eqref{parab} admits a unique classical solution $\uu\in C^{0,2+\theta}_b([s,T]\times\Rd;\Rm)$ and there exists a positive constant $C$ such that
\begin{equation*}
\sup_{t\in [s,T]}\|\uu(t,\cdot)\|_{C^{2+\theta}_b(\Rd;\Rm)}\le C\Big (\|\f\|_{C^{2+\theta}_b(\Rd;\Rm)}+\sup_{t\in [s,T]}\|\g(t,\cdot)\|_{C^{\theta}_b(\Rd;\Rm)}\Big ).
\end{equation*}
{Finally, $\uu\in C_b^{\theta/2}([s,T];C^2(\overline{B_R};\Rm))$ for every $R>0$.}
\end{enumerate}
\end{thm}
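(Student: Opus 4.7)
The plan is to construct the solutions via the explicit representations afforded by the semigroup $\T_t(\tau)$ and the evolution operator $\G(t,s)$, and then derive the optimal Schauder bounds from the smoothing estimates in the Corollary to Theorem \ref{thm-3.4}. The passage to the full range $\lambda>M^P_{[s,T]}$ in \eqref{ellipt} can be handled by first working for $\lambda$ large enough for all integrals to converge and then invoking the resolvent identity.

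For part (i), I would freeze $t\in[s,T]$ and set
\begin{equation*}
\uu(t,x):=\int_0^{\infty}e^{-\lambda\tau}(\T_t(\tau)\f(t,\cdot))(x)\,d\tau,
\end{equation*}
which, by \eqref{stima_uni_gen_auto} and the joint continuity established in Theorem \ref{thm:continuita_totale}, belongs to $C_b([s,T]\times\Rd;\Rm)$. At each fixed $t$, Proposition \ref{prop:carat_dom_At} gives $\uu(t,\cdot)=R(\lambda,\B(t))\f(t,\cdot)\in D(\B(t))$, and an interior Schauder bootstrap (coefficients are locally H\"older) upgrades it to a classical solution. Uniqueness is obtained by squaring, $w_k=u_k^2$, and applying the elliptic maximum principle (Proposition \ref{prop:ell_max_princ}) to $\A^P(t)$ exactly as in the uniqueness part of Proposition \ref{prop-2.6}(ii).

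The core of the argument is the Schauder estimate \eqref{stima-112}. By the Corollary to Theorem \ref{thm-3.4} applied with $\beta=\theta$, and choosing $\omega>0$ small enough that $\lambda-\overline M-\omega>0$, we have uniformly in $t\in[s,T]$ the two estimates
\begin{equation*}
\|\T_t(\tau)\f(t,\cdot)\|_{C^2_b}\le K\tau^{-(2-\theta)/2}e^{(\omega+\overline M)\tau}\|\f(t,\cdot)\|_{C^\theta_b},\qquad \|\T_t(\tau)\f(t,\cdot)\|_{C^3_b}\le K\tau^{-(3-\theta)/2}e^{(\omega+\overline M)\tau}\|\f(t,\cdot)\|_{C^\theta_b}.
\end{equation*}
The first estimate is integrable at $\tau=0^+$ (since $\theta>0$) and, after multiplication by $e^{-\lambda\tau}$ and integration, yields $\|\uu(t,\cdot)\|_{C^2_b(\Rd;\Rm)}\le C\|\f(t,\cdot)\|_{C^\theta_b(\Rd;\Rm)}$. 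For the $\theta$-H\"older seminorm of $D^2_x\uu(t,\cdot)$, I would use the classical splitting at $\tau_0=|x-y|^2$: on $(0,\tau_0)$ I bound $|D^2_x\T_t(\tau)\f(x)-D^2_x\T_t(\tau)\f(y)|$ by $2\|D^2_x\T_t(\tau)\f\|_\infty$ and integrate the first bound (which contributes $C|x-y|^\theta$); on $(\tau_0,\infty)$ I estimate the same difference by $|x-y|\,\|D^3_x\T_t(\tau)\f\|_\infty$ and integrate the second bound, using the exponential factor $e^{-\lambda\tau}$ to handle the tail and the trivial bound $2\|D^2_x\uu\|_\infty$ to dispose of the case $|x-y|\ge 1$.

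For part (ii), the solution is furnished by the variation-of-constants formula $\uu(t,\cdot)=\G(t,s)\f+\int_s^t\G(t,r)\g(r,\cdot)\,dr$, and uniqueness follows from Proposition \ref{prop-2.6}(ii) applied to the difference of two solutions. The uniform $C^{2+\theta}_b$ bound for $\G(t,s)\f$ is immediate from \eqref{stima-11} with $\beta=2+\theta$; for the convolution, I would reproduce the splitting-at-$|x-y|^2$ argument of part (i) inside $\int_s^t\G(t,r)\g(r,\cdot)\,dr$, using the $C^2_b$ and $C^3_b$ smoothing of $\G(t,r)$ starting from $C^\theta_b$ data. Once the uniform $C^{2+\theta}_b$-bound on $\uu(t,\cdot)$ is known, the equation $D_t\uu=\A(t)\uu+\g$ forces $D_t\uu$ to be bounded in $t$ with values in $C^\theta(\overline{B_R};\Rm)$ for each $R>0$, and interpolating this against the uniform $C^2(\overline{B_R};\Rm)$ bound yields the H\"older regularity $\uu\in C^{\theta/2}_b([s,T];C^2(\overline{B_R};\Rm))$.

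The main obstacle I anticipate is the bookkeeping needed to keep all constants uniform in $t\in[s,T]$ (crucially using the uniformity statement in the Corollary to Theorem \ref{thm-3.4}) and to marry the $t$-dependence of $\f(t,\cdot)$ with that of the frozen semigroup $\T_t(\tau)$ when proving the joint continuity of $\uu$. The splitting trick at $\tau_0=|x-y|^2$ is standard, but the transition between the small-$|x-y|$ and large-$|x-y|$ regimes, together with the non-integrable $C^{2+\theta}$-smoothing rate $\tau^{-1}$, is what makes the argument subtle and forces the use of the intermediate $C^2_b$ and $C^3_b$ bounds rather than a direct $C^{2+\theta}_b$ bound.
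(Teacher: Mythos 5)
Your overall architecture matches the paper's: for (i) you represent $\uu(t,\cdot)=R(\lambda,\B(t))\f(t,\cdot)$ via the Laplace transform and derive the $C^{2+\theta}_b$ bound from the smoothing estimates of Section~3; for (ii) you use the variation-of-constants formula and then recover the $\theta/2$-H\"older time regularity by difference from the equation and interpolation. Where the paper simply cites \cite{Lun98} (Theorems 1 and 2) and \cite{BerLor05} for the Schauder bound, you spell out the classical splitting at $\tau_0=|x-y|^2$ combined with the $C^2_b$ and $C^3_b$ smoothing estimates of the Corollary to Theorem~\ref{thm-3.4}; this is exactly the mechanism behind the cited result and is a reasonable thing to make explicit. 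Your observation that the endpoint rate $\tau^{-1}$ is non-integrable and must be circumvented by interpolating between the $C^2_b$ and $C^3_b$ bounds is correct and is the crux of that splitting.

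There is, however, a genuine gap in part (i). The theorem asserts $\uu\in C^{0,2+\theta}_b([s,T]\times\Rd;\Rm)$, which in the paper's definition requires that $\uu$ be twice continuously differentiable \emph{in the pair $(t,x)$} with respect to the spatial variables, not merely that $\sup_t\|\uu(t,\cdot)\|_{C^{2+\theta}_b}$ be finite. You establish the uniform $C^{2+\theta}_b$ bound pointwise in $t$ and invoke Theorem~\ref{thm:continuita_totale} to get continuity of $\uu$ itself, but you never address the continuity of $D^2_x\uu$ with respect to $t$. The paper devotes the second half of the existence proof for (i) to exactly this: it splits $\T_t(\tau)\f(t,\cdot)-\T_{\overline t}(\tau)\f(\overline t,\cdot)=I_1+I_2$, uses Theorem~\ref{thm:continuita_totale}(i)--(ii) and dominated convergence to show $\uu(t,\cdot)\to\uu(\overline t,\cdot)$ in $C_b(B_R;\Rm)$ as $t\to\overline t$, and then interpolates this against the uniform $C^{2+\theta}_b$ bound via
$\|\uu(t,\cdot)-\uu(\overline t,\cdot)\|_{C^2_b(B_R)}\le C\|\uu(t,\cdot)-\uu(\overline t,\cdot)\|_{C_b(B_R)}^{\theta/(2+\theta)}\|\uu(t,\cdot)-\uu(\overline t,\cdot)\|_{C^{2+\theta}_b(B_R)}^{2/(2+\theta)}$
to upgrade to $C^2_b(B_R)$-convergence. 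Without this interpolation step, the membership $\uu\in C^{0,2+\theta}_b$ is not established. You do use precisely this kind of interpolation argument at the end of part (ii), so you clearly have the tool available; it is simply missing where it is most needed, in (i).

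Two smaller remarks. First, the reduction from large $\lambda$ to the full range $\lambda>M^P_{[s,T]}$: "invoking the resolvent identity" is too terse. The paper runs a contraction argument on $\Gamma_t(\g)=R(\overline\lambda,\B(t))(K\g)$ with $\overline\lambda=\lambda+K$ to close the uniqueness for $\lambda>M^P_{[s,T]}$, using the explicit resolvent bound $\|\RR_t(\overline\lambda)\|\le(\overline\lambda-M^P_{[s,T]})^{-1}$; if you intend to rely on standard resolvent-set arguments instead, you should at least make explicit that the resolvent bound guarantees $\lambda\in\rho(\B(t))$ for the whole half-line $\lambda>M^P_{\{t\}}$, and that uniqueness in $C^{2+\theta}_b\subset D(\B(t))$ then follows from injectivity of $\lambda-\B(t)$. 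Second, your appeal to Proposition~\ref{prop:ell_max_princ} for uniqueness is fine, but note that the squaring trick doubles the eigenvalue, so the maximum principle only applies directly when $2\lambda-M^P_{[s,T]}>M^P_{[s,T]}\vee\lambda_{[s,T]}$; this is precisely why the extension step cannot be skipped.
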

\begin{proof}
(i) {\it Uniqueness}. 
Fix $\lambda\in\R$ such that $2\lambda>(\lambda_{[s,T]}\vee M_{[s,T]})+M_{[s,T]}$ and 
let $\uu$ solve the equation $\lambda \uu(t,\cdot)-\A(t)\uu(t,\cdot)=\bm 0$ with $\uu(t,\cdot)\in D(\B(t))$ for every $t\in[s,T]$.  
Arguing as in the second part of the proof of Proposition \ref{prop:carat_dom_At} we infer that $\uu=\bm 0$. This shows that, for every $\f:[s,T]\times \Rd\rightarrow \Rm$ with $\f(t,\cdot)\in C_b(\Rd;\Rm)$, for every $\lambda\in\R$, such that $2\lambda>(\lambda_{[s,T]}\vee M_{[s,T]})+M_{[s,T]}$, and for every $t\in[s,T]$, there exists a unique solution $\vv$ to the equation $\lambda \vv(t,\cdot)-\A(t)\vv(t,\cdot)=\f(t,\cdot)$ given by $\vv(t,\cdot)=R(\lambda,\B(t))\f(t,\cdot)$ for $t\in[s,T]$. Based on this result, we extend the previous property to any $\lambda>M_{[s,T]}^P$. So, let $\uu$ be such that $\uu(t,\cdot)\in D(\B(t))$ and satisfy the equation $\lambda\uu(t,\cdot)-\A(t)\uu(t,\cdot)=\bm0$ for $t\in[s,T]$. Let $K>0$ be such that $\overline \lambda:=\lambda+K$ satisfies $2\overline \lambda>(\lambda_{[s,T]}\vee M_{[s,T]}^P)+M_{[s,T]}^P$. Clearly $\uu$ solves the equation $\overline \lambda\uu(t,\cdot)-\A(t)\uu(t,\cdot)=K\uu(t,\cdot)$ for every $t\in[s,T]$, and so $\uu(t,\cdot)=R(\overline \lambda,\B(t))(K\uu(t,\cdot))$ for every $t\in[s,T]$. We fix $t\in[s,T]$ and consider the operator $\Gamma_{t} :\left(C_b(\Rd;\Rm),\|\cdot\|_{\widetilde{\infty}}\right)\rightarrow \left(C_b(\Rd;\Rm),\|\cdot\|_{\widetilde{\infty}}\right)$ defined by $\Gamma_{t}(\g):=R(\overline \lambda,\B(t))(K\g)$, where $\|\f\|_{\widetilde\infty}$ denotes the maximum over $k=1,\ldots,m$ of the sup-norm of the functions $f_k$.
We claim that $\Gamma_{t}$ is a contraction. If the claim is true, then $\Gamma_t$ admits as unique fixed point the null function, and since $\uu(t,\cdot)$ is a fixed point for $\Gamma_t$, it follows that $\uu(t,\cdot)\equiv\bm 0$. The arbitrariness of $t$ gives $\uu\equiv\bm0$ in $[s,T]\times \Rd$. Let us prove the claim. From the definition of $R(\overline \lambda,\B(t))$, \eqref{stima_uni_gen_auto} and recalling that $\overline \lambda=\lambda+K$, for every $\g\in C_b(\Rd;\Rm)$ we get
\begin{align*}
\|\Gamma_t(\g)\|_{\widetilde \infty}
\leq K\|\g\|_{\widetilde \infty} \int_0^\infty e^{(-\overline \lambda+M_{[s,T]}^P)\tau}d\tau
= \frac{K}{\overline\lambda-M_{[s,T]}^P}\|\g\|_{\widetilde \infty}
= \frac{\overline \lambda-\lambda}{\overline\lambda-M_{[s,T]}^P}\|\g\|_{\widetilde \infty}.
\end{align*}
Since $\lambda>M_{[s,T]}^P$ it follows that
$({\overline \lambda-\lambda)}(\overline\lambda-M_{[s,T]}^P)^{-1}<1$. The claim is so proved and it thus follows that $\uu\equiv\bm 0$ in $[s,T]\times \Rd$.

\vspace{2mm}
{\it Existence}. Arguing as in \cite[Theorem 1]{Lun98} (see also \cite[Theorem 3.6]{BerLor05}), and taking advantage of estimate \eqref{stima-12} (which is uniform with respect to $t\in[s,T]$ if we replace $M_{\{t\}}^P$ with $M_{[s,T]}^P$), it follows that for every $\lambda>M_{[s,T]}^P$ and $t\in [s,T]$ there exists a unique solution $\vv(t)\in C_b^{2+\theta}(\Rd;\Rm)$ to the equation $\lambda\vv(t)-\A(t)\vv(t)=\f(t,\cdot)$ such that $\|\vv(t)\|_{C_b^{2+\theta}(\Rd;\Rm)}\leq C\|\f(t,\cdot)\|_{C_b^\theta(\Rd;\Rm)}$ where $C$ is a positive constant, which depends on $\lambda$ and $\theta$, and can be chosen uniform with respect to $t\in[s,T]$. If we set $\uu(t,x)=\vv(t)(x)$ for every $t\in[s,T]$ and $x\in\Rd$, then it follows that $\uu$ satisfies \eqref{stima-112}.

Let us prove that $\uu\in C_b^{0,2+\theta}([s,T]\times \Rd;\Rm)$. For every $t,\overline t\in[s,T]$ we can write
\begin{align*}
\uu(t,x)-\uu(\overline t,x)
= & \int_0^\infty e^{-\lambda \tau}(\T_t(\tau)\f(t,\cdot)(x)-\T_{\overline t}(\tau)\f(\overline t,\cdot)(x))d\tau, \qquad\;\, x\in \Rd.
\end{align*}
Let us consider the function under the integral sign and split
\begin{align*}
\T_t(\tau)\f(t,\cdot)(x)-\T_{\overline t}(\tau)\f(\overline t,\cdot)(x)
= & \left(\T_t(\tau)\f(t,\cdot)(x)-\T_{t}(\tau)\f(\overline t,\cdot)(x) \right)\\
& +\left(\T_t(\tau)\f(\overline t,\cdot)(x)-\T_{\overline t}(\tau)\f(\overline t,\cdot)(x) \right)\\
=: & I_1(t,\tau,x)+I_2(t,\tau,x).
\end{align*}
We estimate $\|I_1(t,\tau,x)\|
\leq \sup_{\sigma\in[s,T]} \|\T_\sigma(\tau)\f(t,\cdot)(x)-\T_{\sigma}(\tau)\f(\overline t,\cdot)(x) \|$
for every $(t,\tau,x)\in [s,T]\times (0,\infty)\times \Rd$. Next, we fix a compact set $E\subset (0,\infty)\times \Rd$, a sequence $(t_n)\subset [s,T]$ converging to $\overline t$, and notice that $\f(t_n,\cdot)$ converges to $\f(\overline t,\cdot)$ locally uniformly on $\Rd$, as $n$ tends to $\infty$. From Theorem \ref{thm:continuita_totale}(ii) it follows that
\begin{align*}
\lim_{n\rightarrow \infty}\|I_1(t_n,\cdot,\cdot)\|_{C_b(E;\Rm)}
\leq & \lim_{n\rightarrow\infty}\sup_{\sigma\in[s,T]}\|\T_{\sigma}(\cdot)\f(t_n,\cdot)-\T_{\sigma}(\cdot)\f(\overline t,\cdot) \|_{C_b(E;\Rm)}=0.
\end{align*}
The arbitrariness of the sequence $(t_n)$ implies that
$\lim_{t\rightarrow \overline t}\|I_1(t,\cdot,\cdot)\|_{C_b(E;\Rm)}=0$.

As far as $I_2$ is concerned, from Theorem \ref{thm:continuita_totale}(i) we infer that for every compact set $E\subset (0,\infty)\times \Rd$ it holds that
$\lim_{t\rightarrow \overline t}\|I_2(t,\cdot,\cdot)\|_{C_b(E;\Rm)}=0$.
By combining the previous two estimates we conclude that
\begin{align}
\label{stima_cont_ell_integr}
\lim_{t\to \overline t}\|\T_t(\cdot)\f(t,\cdot)-\T_{\overline t}(\cdot)\f(\overline t,\cdot)\|_{C_b(E;\Rm)}=0
\end{align}
for every compact set $E\subset (0,\infty)\times \Rd$. Let us fix $R>0$, then
\begin{align}
\|\uu(t,\cdot)-\uu(\overline t,\cdot)\|_{C_b({B_R};\Rm)}
\leq \int_0^\infty e^{-\lambda \tau}\|\T_t(\tau)\f(t,\cdot)-\T_{\overline t}(\tau)\f(\overline t,\cdot)\|_{C_b({B_R};\Rm)}d\tau
\label{SFV4S}
\end{align}
and from \eqref{stima_cont_ell_integr} we infer that the function under the integral sign vanishes as $t$ tends to $\overline t$, pointwise with respect to $\tau\in(0,\infty)$. Further, from \eqref{stima_uni_gen_auto} we infer that
\begin{align*}
\|\T_t(\tau)\f(t,\cdot)-\T_{\overline t}(\tau)\f(\overline t,\cdot)\|_{C_b({B_R};\Rm)}
\leq 2\sqrt m e^{M_{[s,T]}^P\tau}\sup_{\sigma\in[s,T]}\|\f(\sigma,\cdot)\|_\infty,
\end{align*}
for every $\tau\in(0,\infty)$. Since $\lambda>M_{[s,T]}^P$ the dominated convergence theorem implies that
the right-hand side of \eqref{SFV4S} vanishes as $t$ tends to $\overline t$, which shows that $\uu(t,\cdot)$ converges to $\uu(\overline t,\cdot)$ in $C_b({B_R};\Rm)$ as $t$ tends to $\overline t$, for every $R>0$. Finally, by interpolating between $C_b({B_R};\Rm)$ and $C_b^{2+\theta}({B_R};\Rm)$ we get
\begin{align}
\|\uu(t,\cdot)-\uu(\overline t,\cdot)\|_{C_b^2({B_R};\Rm)}
\!\leq &C\|\uu(t,\cdot)-\uu(\overline t,\cdot)\|_{C_b({B_R};\Rm)}^{\frac{\theta}{2+\theta}} \|\uu(t,\cdot)-\uu(\overline t,\cdot)\|_{C_b^{2+\theta}({B_R};\Rm)}^{\frac{2}{2+\theta}} \notag \\
\leq & \widetilde C\bigg (\sup_{\sigma\in[s,T]}\|\f(\sigma,\cdot)\|_{C_b^\theta(\Rd;\Rm)}\bigg )^{\frac{2}{2+\theta}}\|\uu(t,\cdot)-\uu(\overline t,\cdot)\|_{C_b({B_R};\Rm)}^{\frac{\theta}{2+\theta}},
\label{stima_cont_der_seconde_ell}
\end{align}
and the last term of \eqref{stima_cont_der_seconde_ell} tends to $0$ as $t$ approaches $\overline t$. Hence, the second-order spatial derivatives of $\uu$ are bounded and continuous on $[s,T]\times\Rd$. This completes the proof.

\vspace{2mm}
(ii) The existence and uniqueness part follows arguing as in \cite[Theorem 2]{Lun98} (see also \cite[Theorem 3.7]{BerLor05}). Since $\A(t)\uu(t,\cdot)\in C^\theta_b({B_R};\Rm)$ for every $t\in[s,T]$, by difference we infer that $t\mapsto D_t\uu(t,\cdot)\in C([s,T];C^\theta_b({B_R};\Rm))$ for every $R>0$ and
\begin{align*}
\|D_t\uu(t,\cdot)\|_{C^\theta_b({B_R};\R^m)}
\leq & \bigg (\sup_{t\in[s,T]}\|\A(t)\uu(t,\cdot)\|_{C_b^\theta(B_R;\Rm)}+\sup_{t\in[s,T]}\|\g(t,\cdot)\|_{C_b^{\theta}(B_R;\Rm)}\bigg ) \\
\leq & C\!\!\!\sup_{t\in[s,T]}\|\A(t)\|_{\theta,B_R}\Big (\|\f\|_{C^{2+\theta}_b(\Rd;\Rm)}+\sup_{t\in [s,T]}\|\g(t,\cdot)\|_{C^{\theta}_b(\Rd;\Rm)}\Big ) \\
& +\sup_{t\in [s,T]}\|\g(t,\cdot)\|_{C^{\theta}_b(\Rd;\Rm)}
\end{align*}
for every $t\in [s,T]$ and some positive constant $C$, independent of $t$.
Thus, we get
\begin{align*}
\|\uu(t,\cdot)-\uu(\tau,\cdot)\|_{C^2_b({B_R};\Rm)}
\leq & C \|\uu(t,\cdot)-\uu(\tau,\cdot)\|_{C^\theta_b({B_R};\Rm)}^{\theta/2}\|\uu(t,\cdot)-\uu(\tau,\cdot)\|_{C^{2+\theta}_b({B_R};\Rm)}^{1-\theta/2} \\
\leq & C_1|t-\tau|^{\theta/2},
\end{align*}
which shows that the function $t\mapsto \uu(t,\cdot)$ belongs to $C^{\theta/2}([s,T];C^2_b({B_R};\Rm))$.
\end{proof}

\section{A remarkable consequence of Proposition \ref{prop:confronto_1}}
\label{section:special_case}

In this section, we prove a relevant consequence of Proposition \ref{prop:confronto_1} in the context of systems of invariant measures. For this purpose, we consider a weakly coupled autonomous version of the operator $\A$ in \eqref{picco}, whose associated semigroup in $C_b(\Rd;\Rm)$ is not positive (i.e., each operator $\T(t)$ does not map the cone of componentwise nonnegative functions into itself). 

For further use, we introduce the operator ${\mathcal A}={\rm Tr}(QD^2)+\langle {\bm b},\nabla\rangle$.
Instead of Hypothesis \ref{hyp-base} we assume the following conditions. 
\begin{hyp}
\label{hyp:special_case}
\begin{enumerate}[\rm (i)]
\item 
The coefficients $q_{ij}=q_{ji}$, $b_j$ and $c_{kh}$ belong to $C^\alpha_{\rm loc}(\Rd)$ for every $i,j=1,\ldots,d$ and $h,k=1,\ldots,m$;
\item
the infimum $\mu_0$ over $\Rd$ of the minimum eigenvalue $\mu^k(x)$ of the matrix $Q(x)=(q_{ij}(x))$ is positive;
\item
$\langle C^P(x)y,y\rangle\le 0$ for every $x\in\Rd$ and $y\in\Rm$, where $C^P$ is defined in Section $\ref{section:preliminaries}$;
\item
there exists a positive function $\varphi\in C^2(\Rd)$, blowing up as $|x|\to\infty$, such that ${\mathcal A}\varphi(x)\le a-c\varphi(x)$ for every $x\in\Rd$ and some positive constants $a,c$;
\item
there exist $\eta\in\Rm\setminus\{0\}$ such that $\eta \in {\rm Ker}(C(x))$ for 
every $x\in\Rd$;
\item
there does not exist a nontrivial set $K\subset \{1, \ldots, m\}$ such that the coefficients $c_{ij}$ identically vanish on $\Rd$ for every $i\in K$ and $j\notin K$.
\end{enumerate}
\end{hyp}

From \cite[Proposition 2.4 \& Theorem 2.6]{DeLo11}, it follows that we can associate a semigroup of bounded operators $\T(t)$ (resp. $\T^P(t)$) to the operator $\A$ (resp. $\A^P$) and
\begin{align}
\label{spacial_case_SP_1}
\max\{\|\T(t)\f(x)\|^2, \|\T^P(t)\f(x)\|^2\}\leq (T(t)\|\f\|^2)(x), \qquad\;\, (t,x)\in[0,\infty)\times \Rd,
\end{align}
for every $\f\in C_b(\Rd;\Rm)$, where
$T(t)$ is the semigroup associated to the scalar operator $\mathcal A$. In particular, $\T(\cdot)\f$ (resp. $\T^P(\cdot)\f$) is the unique classical solution to the Cauchy problem associated to operator $\A$ (resp. $\A^P$), which is bounded in every strip $[0,T]\times\Rd$.

\begin{rmk}
\begin{enumerate}[(i)]
{\rm 
\item
Hypothesis \ref{hyp:special_case}(ii) ensures that also the matrix $C(x)$ is nonpositive for every $x\in \Rd$. Indeed, for every $x_0\in\Rd$ and $y\in\Rm$ it holds that $0\ge \langle C^P(x_0)|y|,|y|\rangle\ge C(x_0)y,y\rangle$,
where $|y|=(|y_1|,\ldots,|y_m|)$;
\item Hypothesis \ref{hyp:special_case}(iv) implies that there exists a unique invariant measure $\mu$ for the scalar semigroup $T(t)$ associated to the scalar operator $\mathcal A$ (see e.g., \cite[Theorem 9.1.20]{Lo17});
\item
differently from \cite[Hypothesis 2.1]{AdAnLo19}, we do not require that the off-diagonal entries of $C$ are nonnegative, and so we do not expect the operator $\T(t)$ ($t\geq0$) to be positive.}
\end{enumerate}
\end{rmk}

\begin{rmk}
\label{rmk:controllo_semigruppo}
{\rm Let $\f\in C_b(\Rd;\Rm)$. Arguing as in the proof of \cite[Theorem 2.6]{DeLo11}, we can show that both $\T(\cdot)\f$ and $\T^P(\cdot)\f$ are the limit in $C^{1+\alpha/2,2+\alpha}(D)$ of the solutions $\uu_n$ to the Cauchy problem with Neumann homogeneous boundary conditions
\begin{align*}
\left\{
\begin{array}{ll}
D_t\vv(t,x)={\bm B}\vv(t,x), & (t,x)\in(0,\infty)\times B_n, \vspace{1mm} \\
\displaystyle \frac{\partial \vv}{\partial \nu}(t,x)={\bm 0}, & (t,x)\in(0,\infty)\times \partial B_n, \vspace{1mm} \\
\vv(0,x)=\f(x), & x\in B_n,
\end{array}
\right.
\end{align*}
with ${\bm B}=\A$ and ${\bm B}=\A^P$, respectively, for every compact subset $D\subset (0,\infty)\times \Rd$. Hence, By applying Proposition \ref{prop:confronto_1} to $\T(t)$ and $\T^P(t)$ (notice that, under Hypothesis \ref{hyp:special_case}, the coefficients of $\A$ satisfy Hypothesis $\ref{hyp-base}$(i)-(iv) and so, in particular, the assumptions of Proposition \ref{prop:confronto_1} are verified) we infer that $|(\T(t)\f)_j(x)|\leq (\T^P(t)|\f|)_j(x)$
for every $\f\in C_b(\Rd;\Rm)$, $j\in\{1,\ldots,m\}$, $t\geq0$ and $x\in\Rd$.}
\end{rmk}

We introduce the set
$\mathcal E^P:=\{\f\in C_b(\Rd;\Rm):\T^P(t)\f=\f \ \forall t\geq0\}$. From the proof of Step 1 in \cite[Proposition 3.2]{AdAnLo19} it follows that $\f\in \mathcal E^P$ if and only if $\f$ is constant and $\f\in \bigcap_{x\in \Rd}{\rm Ker}(C^P(x))$.

A similar characterization holds true for the fixed point of the semigroup $\T(t)$, as the following proposition shows.
\begin{prop}
\label{prop:punti_fissi_T_eta}
$\mathcal E:=\{\f\in C_b(\Rd;\Rm):\T(t)\f=\f \ \forall t\geq0\}={\rm span}\{\eta\}$, where $\eta$ is the vector in Hypothesis $\ref{hyp:special_case}(v)$. Further, the vector $|\eta|$ belongs to $\bigcap_{x\in \Rd}{\rm Ker}(C^P(x))$.
\end{prop}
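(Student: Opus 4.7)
The inclusion ${\rm span}\{\eta\}\subset\mathcal E$ is immediate from Hypothesis \ref{hyp:special_case}(v): the constant vector $\eta$ satisfies $\A\eta=C\eta=\bm 0$, so $\T(t)\eta=\eta$ for every $t\ge 0$. For the reverse inclusion, fix $\f\in\mathcal E$. Remark \ref{rmk:controllo_semigruppo} yields the pointwise bound $|f_j(x)|=|(\T(t)\f)_j(x)|\le (\T^P(t)|\f|)_j(x)$ for every $t\ge 0$, $x\in\Rd$ and $j=1,\ldots,m$. Denoting by $\mu$ the (full-support) invariant measure of the scalar semigroup $T(t)$, whose existence is ensured by Hypothesis \ref{hyp:special_case}(iv), the pointwise bound combined with \eqref{spacial_case_SP_1} applied to $|\f|$ and the invariance of $\mu$ yields
\begin{align*}
\int_\Rd\||\f|\|^2\,d\mu\le\int_\Rd\|\T^P(t)|\f|\|^2\,d\mu\le\int_\Rd T(t)\|\f\|^2\,d\mu=\int_\Rd\|\f\|^2\,d\mu=\int_\Rd\||\f|\|^2\,d\mu,
\end{align*}
so every inequality is an equality and the componentwise bound is saturated: $|\f|=\T^P(t)|\f|$ on $\Rd$ for every $t\ge 0$. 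Therefore $|\f|\in\mathcal E^P$; by the characterization of $\mathcal E^P$ recalled just before the statement, $|\f|$ is a constant vector belonging to $\bigcap_{x\in\Rd}{\rm Ker}(C^P(x))$. Specializing to $\f=\eta\in\mathcal E$ already yields the second assertion of the proposition.

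Continuity of $f_j$ on the connected set $\Rd$ together with constancy of $|f_j|$ forces $f_j$ to have constant sign, so $\f$ is a constant vector and $C(x)\f=\bm 0$ for every $x\in\Rd$. I next claim that any nonzero componentwise nonnegative $\bm w\in\bigcap_x{\rm Ker}(C^P(x))$ is strictly positive. Indeed, if $K:=\{k:w_k=0\}$ were nonempty, then for $k\in K$ the identity $(C^P(x)\bm w)_k=0$ would reduce to $\sum_{j\notin K}|c_{kj}(x)|w_j=0$, forcing $c_{kj}\equiv 0$ on $\Rd$ for every $k\in K$ and $j\notin K$, contrary to Hypothesis \ref{hyp:special_case}(vi). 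A standard minimal-ratio argument (applied to $\bm w-\alpha\bm w'$ with $\alpha$ the largest constant keeping the difference componentwise nonnegative) then shows that any two strictly positive elements of $\bigcap_x{\rm Ker}(C^P(x))$ are proportional, so $|\f|=c|\eta|$ for some $c\ge 0$ and, in particular, $|\eta_j|>0$ for every $j$.

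Promoting $|\f|=c|\eta|$ to $\f=\pm c\eta$ is the delicate step; Hypothesis \ref{hyp:special_case}(vi) must be exploited a second time to rule out hybrid sign patterns. Taking $y=e_k$ in Hypothesis \ref{hyp:special_case}(iii) gives $c_{kk}(x)\le 0$, and the identities $C(x)\eta=\bm 0$ and $C^P(x)|\eta|=\bm 0$ then combine to yield
\begin{align*}
\bigg|\sum_{j\ne k}c_{kj}(x)\eta_j\bigg|=|c_{kk}(x)||\eta_k|=\sum_{j\ne k}|c_{kj}(x)||\eta_j|.
\end{align*}
Saturation of the triangle inequality forces $c_{kj}(x){\rm sgn}(\eta_j){\rm sgn}(\eta_k)\ge 0$ for every $j\ne k$ and $x\in\Rd$, and the same reasoning applied to $\f$ in place of $\eta$ gives the analogous condition with ${\rm sgn}(f_j),{\rm sgn}(f_k)$. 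Setting $\tau_j:={\rm sgn}(f_j){\rm sgn}(\eta_j)\in\{\pm 1\}$ and $K:=\{k:\tau_k=-1\}$, the two sign conditions are compatible only when $\tau_j\tau_k=1$, so $c_{kj}(x)=0$ whenever $k\in K$ and $j\notin K$. Hypothesis \ref{hyp:special_case}(vi) then rules out every nontrivial $K$, leaving $\f=c\eta$ if $K=\emptyset$ and $\f=-c\eta$ if $K=\{1,\ldots,m\}$, which concludes the proof.
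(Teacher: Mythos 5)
Your proposal is correct, and it reaches the conclusion by a genuinely different route than the paper in the step that shows $\mathcal E$ is one-dimensional. The opening part is essentially the same in spirit: you feed the comparison $|(\T(t)\f)_j|\le (\T^P(t)|\f|)_j$ from Remark \ref{rmk:controllo_semigruppo} together with \eqref{spacial_case_SP_1} into the invariance of $\mu$ and deduce $|\f|\in\mathcal E^P$; the paper instead first shows $\|\f\|^2$ is a fixed point of $T(t)$, differentiates the dissipativity estimate to conclude $\f$ is a constant vector with $C(\cdot)\f\equiv\bm 0$, and only then passes to $|\f|\in\mathcal E^P$. Where you diverge substantially is afterwards. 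You reprove from scratch, via the ``no zero component'' claim plus a minimal-ratio argument, that $\bigcap_x{\rm Ker}(C^P(x))$ is a line spanned by a strictly positive vector — the paper simply cites \cite[Proposition 3.2]{AdAnLo19} for this. More significantly, to pass from $|\f|=c|\eta|$ to $\f\in{\rm span}\{\eta\}$, you run a triangle-inequality saturation argument on the rows of $C(x)\eta=\bm 0$ and $C^P(x)|\eta|=\bm 0$ to extract the sign condition $c_{kj}\,{\rm sgn}(\eta_j){\rm sgn}(\eta_k)\ge 0$, apply it again to $\f$, and invoke Hypothesis \ref{hyp:special_case}(vi) a second time to exclude hybrid sign patterns. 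The paper's Step 3 avoids all of this with a shorter linear-algebra trick: if $\eta,\zeta\in\mathcal E$ were independent, one chooses $a$ so that $\eta-a\zeta$ has a vanishing component but is not identically zero, and since $|\eta-a\zeta|$ must be a nonzero multiple of the strictly positive $|\eta|$, a contradiction falls out immediately. Your approach is sound and gives a bit more information (the explicit sign structure of $C$), at the cost of considerably more bookkeeping; it also leans on the well-definedness of ${\rm sgn}(f_j)$, which you correctly discharge by treating $c=0$ separately. One cosmetic point: when you write that all inequalities in the displayed chain are equalities, it is worth noting explicitly that full support of $\mu$ together with continuity upgrades ``$\mu$-a.e.'' to ``everywhere'' before concluding the pointwise saturation; the paper makes this remark and you should too.
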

\begin{proof}
For sake of convenience we split the proof into three steps.

{\it Step 1}. Here, we prove that $\f$ belongs to ${\mathcal E}$ if and only if $\f\equiv\zeta$ for some $\zeta\in\bigcap_{x\in\Rd}{\rm Ker}(C(x))$.
From \eqref{spacial_case_SP_1} it follows that, if $\f\in {\mathcal E}$, then $\|\f\|^2=\|\T(t)\f\|^2\leq T(t)\|\f\|^2$ in $\Rd$ for every $t\in (0,\infty)$.
The above inequality and the invariance property of $\mu$, which yields
$\int_{\Rd}(T(t)\|\f\|^2-\|\f\|^2)d\mu=0$ for every $t>0$, imply that, for every $t>0$, $T(t)\|\f\|^2=\|\f\|^2$ $\mu$- almost everywhere.  Since $\mu$ is equivalent to the Lebesgue measure, this is enough to conclude that  $\|\T(t)\f\|^2=T(t)\|\f\|^2=\|\f\|^2$ in $\Rd$ for every $t>0$. Hence,
$\|\f\|^2$ is a fixed point of the scalar semigroup $T(t)$ and, consequently, is a constant (see \cite[Proposition 9.1.13]{Lo17}). As a byproduct, we can infer that
$0 = D_t\|\f\|^2= D_t\|\T(t)\f\|^2
= 2\langle \T(t)\f,D_t\T(t)\f\rangle 
\le -2\mu_0\|J_x\T(t)\f\|^2$ for every $t>0$,
which shows that $\f\equiv\zeta$ for some $\zeta\in\Rm$ and
${\bm 0}= D_t\f(x)=(D_t\T(t)\f)(x)=(\A\T(t)\f)(x)=\A\f(x)=C(x)\zeta$ for every $x\in\Rd$,
so that $\zeta\in\bigcap_{x\in\Rd}{\rm Ker}(C(x))$.

{\em Step 2.} Here, we fix $\zeta=(\zeta_1,\ldots,\zeta_m)$ in $\bigcap_{x\in\Rd}{\rm Ker}(C(x))$, and prove that $|\zeta|$ belongs to $\mathcal E^P$.
By Step 1, $\zeta$ belongs to $\mathcal E$. Moreover, from Remark \ref{rmk:controllo_semigruppo} we infer that
$|(\T(\cdot)\zeta)_j|\le (\T^P(\cdot)|\zeta|)_j$ in $[0,\infty)\times\Rd$ for every $j=1,\ldots,m$.
By taking \eqref{spacial_case_SP_1} into account we get
$\|\zeta\|^2=\|\T(t)\zeta\|^2\leq \|\T^P(t)|\zeta|\|^2\le T(t)\|\zeta\|^2=\|\zeta\|^2$ for every $t>0$, which gives $\|\zeta\|^2=\|\T(t)\zeta\|^2=\|\T^P(t)|\zeta|\|^2$. 
Hence, $|\zeta_j|=|(\T(t)\zeta)_j|=(\T^P(t)|\zeta|)_j$ for every $t\geq0$, $x\in\R^d$ and $j=1,\ldots,m$, i.e., $|\zeta|$ belongs to $\mathcal E^P$.
Since $\eta\in \bigcap_{x\in\Rd}{\rm Ker}(C(x))$ it follows that $\bm 0\neq |\eta|$ belongs to $\mathcal E^P\equiv \bigcap_{x\in\Rd}{\rm Ker}(C^P(x))$. From \cite[Proposition 3.2]{AdAnLo19}, we infer that 
$\mathcal E^P ={\rm span}\{|\eta|\}=\bigcap_{x\in\Rd}{\rm Ker}(C^P(x))$ and $|\eta_k|> 0$ for every $k=1,\ldots,m$.

{\em Step 3.} Assume by contradiction that there exist two linearly independent vectors $\eta,\zeta\in\mathcal E$. Then, there exist $a\in\R\setminus\{0\}$ and $i,j\in\{1,\ldots,m\}$ such that $(\eta-a\zeta)_i=0$ and $(\eta-a\zeta)_j\neq0$. Since $\eta-a\zeta\in \mathcal E=\bigcap_{x\in\R^d}{\rm Ker}(C(x))$, from Step $2$ we infer that $|\eta-a\zeta|\in \mathcal E^P={\rm span}\{|\eta|\}$, i.e., there exists a non-negative constant $w$ such that $|\eta-a\zeta|=w|\eta|$. Since $|\eta_j-a\zeta_j|\neq0$ it follows that $w\neq0$, and $\eta_i-a\zeta_i=0$ implies that $\eta_i=0$, which contradicts the fact that $|\eta_k|>0$ for every $k=1,\ldots,m$.
\end{proof}

We now recall the definition of systems of invariant measures for vector-valued semigroups. We say that $\boldsymbol\mu=\{\mu_1,\ldots,\mu_m\}$ is a system of invariant measures for $\T(t)$ if $\mu_k$ is a bounded Borel measure on $\Rd$ for every $k=1,\ldots,m$ and
\begin{align}
\label{def_mis_inv}
\sum_{k=1}^m\int_{\Rd}(\T(t)\f)_kd\mu_k=\sum_{k=1}^m\int_{\Rd}f_kd\mu_k, \qquad \f\in C_b(\Rd;\Rm), \quad t\geq0.
\end{align}

By Proposition \ref{prop:punti_fissi_T_eta}, there exists a nontrivial vector $\xi\in \bigcap_{x\in\Rd}{\rm Ker}(C^P(x))$. Hence, the assumptions in \cite{AdAnLo19} are satisfied and we can use the results therein. In particular, in \cite[Proposition 3.2 \& Theorem 3.6]{AdAnLo19} it has been proved that $\mathcal E^P={\rm span}\{\xi\}$, that all the components of the vector $\xi$ have the same sign (here we assume that $\xi_k>0$ for every $k=1,\ldots,m$), and that a family of bounded Borel measures ${\boldsymbol\mu}^P=\{\mu_1^P,\ldots,\mu_m^P\}$ is a system of invariant measures for $\T^P(t)$, i.e., formula \eqref{def_mis_inv} holds true with $\T(t)$ and $\bm\mu$ replaced by $\T^P(t)$ and ${\bm \mu}^P$, respectively,
if and only if there exists a constant $c$ such that $\mu_j=c\xi_j\mu$, where $\mu$ is the unique invariant measure associated to $T(t)$. As a byproduct, for every system of invariant measures $\boldsymbol \mu^P$ for $\T^P(t)$, the semigroup extends to a strongly continuous semigroup (still denoted by $\T^P(t)$) on the space $L^p_{{\boldsymbol \mu}^P}(\Rd;\Rm)$ of functions $\f:\Rd\rightarrow \Rm$ Borel measurable such that $f_j\in L^p_{\mu_j^P}(\Rd)$ for every $j=1,\ldots,m$, 
endowed with the norm
$\|\f\|^p_{L^p_{{\boldsymbol\mu}^p}(\Rd;\R^m)}=\displaystyle\sum_{k=1}^m\int_{\Rd}|f_k|^pd\mu^P_k$.

\begin{rmk}
{\rm Hereafter, we normalize $\eta$ and $\xi$. From the proof of Proposition \ref{prop:punti_fissi_T_eta} it follows that $\zeta=|\eta|$ for every $k=1,\ldots,m$, and so $\eta_k\neq 0$ for every $k=1,\ldots,m$.}
\end{rmk}

The following proposition characterizes the systems of invariant measures for $\T(t)$. 
Its proof is analogous to that of \cite[Theorem 3.6]{AdAnLo19}, with slight changes. Hence, it is omitted.
\begin{prop}
$\bm\mu$ is a system of invariant measures for $\T(t)$ if and only if there exists $c\in\R$ such that $\mu_k=c\eta_k\mu$ for every $k=1,\ldots,m$, where $\mu$ is the invariant measure associated to $T(t)$.
\end{prop}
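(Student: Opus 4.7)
I would mimic the proof of \cite[Theorem 3.6]{AdAnLo19}, splitting the argument into a sufficiency and a necessity direction; the ``slight changes'' with respect to that reference are those needed to cope with the loss of positivity of $\T(t)$.

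\emph{Sufficiency.} Assume $\mu_k=c\eta_k\mu$ for every $k$ and some $c\in\R$. For $\f\in C_b(\Rd;\Rm)$, let $\bm u(t,\cdot):=\T(t)\f$ and $h(t,x):=\sum_{k=1}^m\eta_ku_k(t,x)$. Since the diffusion/drift parts of $\A$ coincide with the scalar operator $\mathcal A$, a direct calculation gives
\begin{align*}
D_th=\mathcal Ah+\sum_{l=1}^m(C^T\eta)_l u_l.
\end{align*}
I claim that $C^T(x)\eta=0$ on $\Rd$. Indeed, Hypothesis \ref{hyp:special_case}(iii) together with the pointwise inequality $\langle C(x)y,y\rangle\le\langle C^P(x)|y|,|y|\rangle\le 0$ yields $\langle C(x)y,y\rangle\le 0$ for every $y\in\Rm$; combining this with $C(x)\eta=0$, the expansion $\langle C(x)(\eta+tz),\eta+tz\rangle=t\langle C^T(x)\eta,z\rangle+t^2\langle C(x)z,z\rangle$ must be nonpositive for every $t\in\R$, forcing $\langle C^T(x)\eta,z\rangle=0$ for every $z\in\Rm$. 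Hence $h$ solves the scalar Cauchy problem $D_th=\mathcal Ah$ with $h(0,\cdot)=\sum_k\eta_kf_k$, so uniqueness of bounded classical solutions gives $h(t,\cdot)=T(t)\bigl(\sum_k\eta_kf_k\bigr)$; the invariance of $\mu$ under $T(t)$ then immediately produces \eqref{def_mis_inv}.

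\emph{Necessity.} Let $\bm\mu$ be any system of invariant measures for $\T(t)$. My plan is to establish a pointwise asymptotic $\T(t)\f\to\lambda(\f)\eta$ (for a scalar functional $\lambda$) as $t\to\infty$, and then pass to the limit in \eqref{def_mis_inv} via dominated convergence; the required uniform bound $\|\T(t)\f\|_\infty\le\|\f\|_\infty$ follows from \eqref{spacial_case_SP_1} and the sup-norm contractivity of the scalar Markov semigroup $T(t)$. For the $\eta$-component of the convergence, the sufficiency identity $\sum_k\eta_k(\T(t)\f)_k=T(t)\bigl(\sum_k\eta_kf_k\bigr)$ combined with scalar ergodicity (\cite[Theorem 9.1.20]{Lo17}, which applies thanks to Hypothesis \ref{hyp:special_case}(iv)) yields $\sum_k\eta_k(\T(t)\f)_k\to L(\f):=\int_\Rd\sum_k\eta_kf_k\,d\mu$ pointwise on $\Rd$. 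The transverse components are controlled by sandwiching with the positive semigroup, using $|(\T(t)\f)_k|\le(\T^P(t)|\f|)_k$ from Remark \ref{rmk:controllo_semigruppo} together with the asymptotic alignment $\T^P(t)|\f|\to (\text{const})\,\xi$ in the positive case, where $\xi=|\eta|$. Once $\T(t)\f\to\lambda(\f)\eta$ is known with $\lambda(\f)=L(\f)/\|\eta\|^2$, passing to the limit $t\to\infty$ in the invariance identity produces $\sum_k\int_\Rd f_k\,d\mu_k=c\int_\Rd\sum_k\eta_kf_k\,d\mu$ with $c=\|\eta\|^{-2}\sum_k\eta_k\mu_k(\Rd)$; the arbitrariness of $\f\in C_b(\Rd;\Rm)$ then forces $\mu_k=c\eta_k\mu$ for every $k$.

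The main obstacle is the pointwise asymptotic alignment of $\T(t)\f$ with $\eta$ (in a mode strong enough to license dominated convergence against the possibly signed $\mu_k$). The $\eta$-component is immediate from sufficiency plus scalar ergodicity, but isolating the transverse components requires the sandwich with $\T^P(t)$ and the one-dimensionality of $\mathcal E^P=\mathrm{span}\{|\eta|\}$ obtained in Proposition \ref{prop:punti_fissi_T_eta}. This is precisely the ``slight change'' relative to \cite[Theorem 3.6]{AdAnLo19}, where positivity of the semigroup made the analogous step automatic.
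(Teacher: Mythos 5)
Your sufficiency argument is correct and uses the right observation: the non\-positivity of $C(x)$ (see the Remark following Hypothesis~\ref{hyp:special_case}) together with $C(x)\eta=\bm 0$ does force $C(x)^T\eta=\bm 0$, so $h:=\sum_k\eta_k(\T(\cdot)\f)_k$ solves the scalar Cauchy problem for $\mathcal A$ and the invariance of $\mu$ yields \eqref{def_mis_inv}.

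The necessity direction as you have sketched it, however, has a genuine gap. You invoke the pointwise asymptotic alignment $\T(t)\f\to\mathcal M_{\f}\,\eta$ (and its positive analogue for $\T^P(t)$), but in the paper this is exactly the content of the final theorem of Section~\ref{section:special_case}, which is established \emph{after} the present proposition and under the strictly stronger Hypothesis~\ref{hyp:asym_beha} (which adds $C^{1+\alpha}_{\rm loc}$ smoothness and growth control of $q_{ij}$ and $\langle b,\cdot\rangle$ against $\varphi$) on top of Hypotheses~\ref{hyp:special_case}. The present proposition is stated under Hypotheses~\ref{hyp:special_case} alone, so the asymptotic theorem is simply not available to you here, and using it would be circular. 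There is also a secondary issue: even granting the asymptotics, passing from locally uniform convergence to convergence of $\sum_k\int_{\Rd}(\T(t)\f)_k\,d\mu_k$ against the possibly signed $\mu_k$ is not a direct application of dominated convergence and needs tightness, which again leans on Hypothesis~\ref{hyp:asym_beha}.

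The ``slight change'' the paper has in mind almost certainly avoids asymptotics entirely and reduces both implications at once to the $\T^P$-case already quoted from \cite[Theorem 3.6]{AdAnLo19}, via a diagonal conjugacy. Set $D:=\mathrm{diag}(\mathrm{sign}(\eta_1),\dots,\mathrm{sign}(\eta_m))$; this is well defined and invertible because Proposition~\ref{prop:punti_fissi_T_eta} gives $\eta_k\neq 0$ for every $k$ and $\xi=|\eta|$. From $C(x)\eta=\bm 0$ and $C^P(x)|\eta|=\bm 0$, multiplying the $k$-th component of the first identity by $\mathrm{sign}(\eta_k)$ and subtracting the second, one finds
\begin{align*}
\sum_{l\neq k}\bigl[\mathrm{sign}(\eta_k)\mathrm{sign}(\eta_l)\,c_{kl}(x)-|c_{kl}(x)|\bigr]\,|\eta_l|=0,
\end{align*}
which is a sum of nonpositive terms, so each bracket vanishes and hence $DC(x)D=C^P(x)$ for every $x$. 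Since the diffusion and drift parts of $\A$ and $\A^P$ coincide, this gives $\A=D\A^PD$, and by uniqueness of bounded classical solutions $\T(t)=D\,\T^P(t)\,D$ for all $t\ge 0$. Consequently $\boldsymbol\mu$ is a system of invariant measures for $\T(t)$ if and only if $\widetilde{\boldsymbol\mu}:=(\mathrm{sign}(\eta_1)\mu_1,\dots,\mathrm{sign}(\eta_m)\mu_m)$ is one for $\T^P(t)$, and the already-quoted characterization $\widetilde\mu_k=c\,\xi_k\,\mu=c\,|\eta_k|\,\mu$ gives $\mu_k=c\,\eta_k\,\mu$. This stays entirely within Hypotheses~\ref{hyp:special_case} and delivers both directions of the equivalence simultaneously.
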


The measures $\mu_k$, $k=1,\ldots,m$, may not have the same sign, since the components of $\eta$ may have different signs. Hence, when we talk about $L^p$-spaces with respect to $\bm \mu$ we are considering the space of the measurable functions $\f:\Rd\rightarrow \Rm$ such that
$\sum_{k=1}^m\int_{\Rd}|f_k|^pd|\mu_k|<\infty$, where $|\mu_k|$ is the total variation of the measure $\mu_k$, for $k=1,\ldots,m$. We stress that if $\mu_k=c\eta_k\mu$ for some $c\in\R$, then $|\mu_k|=c\xi_k\mu$, i.e., the vector $(|\mu_1|,\ldots,|\mu_m|)$ is a system of invariant measures for $\T^P(t)$. We are able to prove that for every $t\geq0$ the operator $\T(t)$ extends to a bounded linear operator on $L^p_{\bm \mu}(\Rd;\Rm)$.

\begin{prop}
$\T(t)$ extends to a strongly continuous semigroup on $L^p_{\bm \mu}(\Rd;\Rm)$ with $\|\T(t)\|_{L(L^p_{{\boldsymbol\mu}})}\leq 2^{(p-1)/p}$ for every $t\geq0$.
\end{prop}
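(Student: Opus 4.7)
The plan is to reduce the claim to the corresponding result for the positive auxiliary semigroup $\T^P(t)$ on $L^p_{|\bm\mu|}(\Rd;\Rm)$ (where $|\bm\mu|=(|\mu_1|,\ldots,|\mu_m|)$), which is already known from \cite{AdAnLo19}, via the pointwise comparison $|(\T(t)\f)_j|\leq (\T^P(t)|\f|)_j$ established in Remark \ref{rmk:controllo_semigruppo}. Since $\mu_k=c\eta_k\mu$, the measure $|\mu_k|=|c||\eta_k|\mu$ equals $|c|\xi_k\mu$ (after normalising $\xi=|\eta|$), so $(|\mu_1|,\ldots,|\mu_m|)$ is (up to a positive scalar) a system of invariant measures for $\T^P(t)$, and the results in \cite{AdAnLo19} apply directly to the right-hand side of the comparison.

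First, for $\f\in C_b(\Rd;\Rm)$ I would raise the comparison inequality to the $p$-th power, sum over $k$ and integrate against $|\mu_k|$ to obtain
\begin{align*}
\|\T(t)\f\|_{L^p_{\bm\mu}}^p=\sum_{k=1}^m\int_{\Rd}|(\T(t)\f)_k|^p\,d|\mu_k|\le \sum_{k=1}^m\int_{\Rd}\bigl((\T^P(t)|\f|)_k\bigr)^p\,d|\mu_k|=\|\T^P(t)|\f|\|_{L^p_{|\bm\mu|}}^p.
\end{align*}
By the known extension of $\T^P(t)$ to a strongly continuous semigroup on $L^p_{|\bm\mu|}(\Rd;\Rm)$ with norm at most $2^{(p-1)/p}$ (Theorem 3.6 in \cite{AdAnLo19}), the right-hand side is bounded above by $2^{p-1}\|\,|\f|\,\|_{L^p_{|\bm\mu|}}^p=2^{p-1}\|\f\|_{L^p_{\bm\mu}}^p$. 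Since each $|\mu_k|$ is a finite Borel measure, $C_b(\Rd;\Rm)$ is dense in $L^p_{\bm\mu}(\Rd;\Rm)$, and the uniform bound lets me extend $\T(t)$ by continuity to a bounded operator on $L^p_{\bm\mu}$ with $\|\T(t)\|_{L(L^p_{\bm\mu})}\le 2^{(p-1)/p}$; the semigroup law then carries over from $C_b$ by density.

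It remains to verify strong continuity at $t=0$. For $\f\in C_c(\Rd;\Rm)$, estimate \eqref{spacial_case_SP_1} and the comparison inequality give $|(\T(t)\f)_k(x)|\leq (\T^P(t)|\f|)_k(x)\leq \|\f\|_\infty$ for every $(t,x)$ and $k$, while Proposition \ref{prop:conv_loc_unif} (in its autonomous form) yields $(\T(t)\f)(x)\to\f(x)$ locally uniformly (hence pointwise) on $\Rd$ as $t\to 0^+$. Since $|\mu_k|(\Rd)<\infty$, the dominated convergence theorem gives $\T(t)\f\to\f$ in $L^p_{\bm\mu}$. A standard three-$\varepsilon$ argument, using density of $C_c(\Rd;\Rm)$ in $L^p_{\bm\mu}(\Rd;\Rm)$ and the uniform bound $\|\T(t)\|_{L(L^p_{\bm\mu})}\le 2^{(p-1)/p}$, extends strong continuity at $0$ to arbitrary $\f\in L^p_{\bm\mu}$; strong continuity at any $t_0>0$ follows from the semigroup law.

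I expect no serious obstacle beyond careful bookkeeping: once the comparison $|(\T(t)\f)_j|\leq (\T^P(t)|\f|)_j$ is in place, the $L^p$-bound is transferred almost mechanically from $\T^P(t)$ on $L^p_{|\bm\mu|}$ to $\T(t)$ on $L^p_{\bm\mu}$, and strong continuity is routine via dominated convergence plus density. The only mildly delicate point is noting that $|\bm\mu|$ is, up to a positive constant, a system of invariant measures for $\T^P(t)$, so that the quoted result in \cite{AdAnLo19} is truly applicable.
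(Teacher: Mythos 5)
Your proof is correct and follows essentially the same strategy as the paper's: both hinge on the comparison $|(\T(t)\f)_k|\le(\T^P(t)|\f|)_k$ from Remark \ref{rmk:controllo_semigruppo}, transfer the $L^p$-bound from the positive auxiliary semigroup $\T^P(t)$ on $L^p_{|\bm\mu|}$, and then extend by density. The only cosmetic difference is that you invoke the $L^p_{|\bm\mu|}$-boundedness of $\T^P(t)$ as a black box, whereas the paper inlines the underlying pointwise inequality $|(\T^P(t)|\f|)_k|^p\le 2^{p-1}(\T^P(t)(|f_1|^p,\ldots,|f_m|^p))_k$ and uses the invariance of $\widetilde{\bm\mu}$ explicitly; you also spell out the strong-continuity step (dominated convergence on $C_c$, then density) which the paper simply delegates to the argument of \cite[Proposition 3.12]{AdAnLo19}.
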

\begin{proof}
Fix $\f\in C_b(\Rd;\Rm)$. From the computations in \cite[Proposition 3.12]{AdAnLo19} and Remark \ref{rmk:controllo_semigruppo} we infer that
$|(\T(t)\f(x))_k|^p
\leq |(\T^P(t)|\f|(x))_k|^p
\leq 2^{p-1}(\T^P(t)(|f_1|^p,\ldots,|f_m|^p)(x))_k$
for every $x\in\Rd$ and $k=1,\ldots,m$. The invariance of $\widetilde {\boldsymbol\mu}:=(|\mu_1|,\ldots,|\mu_m|)$ with respect to $\T^P(t)$ gives
\begin{align*}
\|\T(t)\f\|^p_{L^p_{{\boldsymbol\mu}}}
\le 2^{p-1}\sum_{k=1}^m\int_{\Rd}(\T^P(t)(|f_1|^p,\ldots,|f_m|^p)(x))_kd\widetilde \mu_k= 2^{p-1}\|\f\|_{L^p_{{\boldsymbol\mu}}}^p.
\end{align*}

The strong continuity of $\T(t)$ follows arguing as in \cite[Proposition 3.12]{AdAnLo19}.
\end{proof}

\subsection{Asymptotic behaviour}
Here, we characterize the asymptotic behaviour of the semigroup $\T(t)$ as $t$ tends to $\infty$. Throughout this subsection, besides Hypotheses \ref{hyp:special_case}, we assume the following additional conditions.
\begin{hyp0}
\label{hyp:asym_beha}
The coefficients of the operator $\A$ belong to $C^{1+\alpha}_{\rm loc}(\Rd)$. Moreover, there exists a constant $C>0$ such that
$\max\{|q_{ij}(x)|,\langle b(x),x\rangle\}\leq C(1+|x|^2)\varphi(x)$ for every $x\in\Rd$ and $i,j=1,\ldots,d$.
\end{hyp0}

The proof of the following proposition is analogous to that of \cite[Proposition 4.3]{AdAnLo19} and so we skip it.

\begin{prop}
\label{prop:conv_L2norm}
For every $\f\in C_c^{3+\alpha}(\Rd;\Rm)$ the $L^2_\mu$-norm of $\|J_x\T(t)\f\|$ vanishes as $t$ tends to $\infty$.
\end{prop}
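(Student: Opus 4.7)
\smallskip
\noindent\textbf{Proof proposal.}
The plan is to use the standard energy dissipation argument for the semigroup $\T(t)$ in $L^2_\mu$, together with the sign constraints on $C$ and $Q$ provided by Hypotheses \ref{hyp:special_case}(ii)--(iii). Set $\uu(t,\cdot):=\T(t)\f$ and $\phi(t,x):=\|\uu(t,x)\|^2=\sum_{k=1}^m u_k(t,x)^2$. Since $\f\in C_c^{3+\alpha}(\R^d;\R^m)$, the regularity results of Proposition \ref{prop-2.6} and Proposition \ref{pro:appr_sol_neumann} (and their autonomous counterparts) guarantee that $\uu$ is smooth enough in both variables to legitimate the manipulations below. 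Using that the diffusion and drift are the same for each component (so that $\mathcal A$ acts on $\phi$ without coupling) one obtains the pointwise identity
\begin{align*}
D_t\phi=\mathcal A\phi-2\sum_{k=1}^m\langle Q\nabla u_k,\nabla u_k\rangle+2\langle\uu,C\uu\rangle\qquad\text{on }(0,\infty)\times\R^d.
\end{align*}

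Next, I would integrate this identity against the scalar invariant measure $\mu$ associated to $T(t)$. The key point is that $\int_{\R^d}\mathcal A\phi\,d\mu=0$: this is the infinitesimal version of the invariance property $\int T(t)\phi\,d\mu=\int\phi\,d\mu$, and it can be justified for $\phi(t,\cdot)$ by approximating with cut-offs and exploiting the Lyapunov function $\varphi$ of Hypothesis \ref{hyp:special_case}(iv) together with the growth control on $Q$ and $\bm b$ given by Hypothesis \ref{hyp:asym_beha}. The ellipticity of $Q$ (Hypothesis \ref{hyp:special_case}(ii)) together with $\langle C\uu,\uu\rangle\le0$ (a consequence of Hypothesis \ref{hyp:special_case}(iii), as noted in the remark following it) then yields
\begin{align*}
\frac{d}{dt}\int_{\R^d}\|\uu(t,\cdot)\|^2\,d\mu\le -2\mu_0\int_{\R^d}\|J_x\uu(t,\cdot)\|^2\,d\mu=-2\mu_0\, g(t),
\end{align*}
where $g(t):=\|\,\|J_x\T(t)\f\|\,\|^2_{L^2_\mu}$. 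Integrating over $[0,t]$ and using nonnegativity gives
\begin{align*}
2\mu_0\int_0^t g(\tau)\,d\tau\le \|\f\|^2_{L^2_\mu},
\end{align*}
so $g\in L^1(0,\infty)$. Moreover $\Phi(t):=\|\uu(t,\cdot)\|^2_{L^2_\mu}$ is non-increasing and bounded below by $0$; hence it admits a finite limit as $t\to\infty$, which forces
\begin{align*}
\int_t^{t+1}g(\tau)\,d\tau\le\frac{\Phi(t)-\Phi(t+1)}{2\mu_0}\xrightarrow[t\to\infty]{}0.
\end{align*}

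The final step, and the one I expect to be the main obstacle, is upgrading this averaged decay to pointwise decay $g(t)\to 0$. I would do this by showing that $g$ is itself non-increasing (equivalently, by establishing a contractivity of $J_x$ under $\T(\cdot)$ in $L^2_\mu$). The argument would mimic the $L^2_\mu$-dissipation identity above, applied to $\psi:=\|J_x\uu\|^2=\sum_{k,i}(D_iu_k)^2$: one differentiates each $D_iu_k$ via $D_tD_iu_k=\mathcal A D_iu_k+(D_iq_{jl})D_{jl}u_k+(D_ib_j)D_ju_k+\sum_l(D_ic_{kl})u_l+\sum_lc_{kl}D_iu_l$, integrates against $\mu$, and exploits the carré-du-champ identity $\int h\mathcal A h\,d\mu=-\int\langle Q\nabla h,\nabla h\rangle\,d\mu$ to produce a dissipative term $-2\mu_0\|D^2_x\uu\|^2_{L^2_\mu}$. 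The diagonal coupling contribution $\sum_i\langle C(J_x\uu\cdot e_i),J_x\uu\cdot e_i\rangle$ is nonpositive by Hypothesis \ref{hyp:special_case}(iii), while the remaining cross-terms containing $D_xq$, $D_xb$, $D_xC$ have to be absorbed into $-2\mu_0\|D^2_x\uu\|^2_{L^2_\mu}$ and $-c_0\|J_x\uu\|^2_{L^2_\mu}$ via Cauchy--Schwarz and Young's inequality, using the $C^{1+\alpha}_{\rm loc}$ regularity of the coefficients and the growth bounds in Hypothesis \ref{hyp:asym_beha}. This is the delicate point, since the derivatives of the coefficients are only locally bounded; the Lyapunov function $\varphi$ (through Hypothesis \ref{hyp:special_case}(iv)) and the uniform sup-bound $\|\uu(t,\cdot)\|_\infty\le\|\f\|_\infty$ coming from \eqref{spacial_case_SP_1} and $T(t)\bm 1=\bm 1$ will play the role of controlling the far-field contributions. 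Once $g$ is shown to be non-increasing, combining with the averaged convergence $\int_t^{t+1}g\to 0$ immediately gives $g(t)\to 0$, as desired.
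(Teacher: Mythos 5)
Your steps (1)--(6) are sound and reflect the expected energy argument: the pointwise identity for $D_t\|\uu\|^2$, integration against the scalar invariant measure $\mu$ using $\int_{\Rd}\mathcal{A}\phi\,d\mu=0$, the sign of $\langle C\uu,\uu\rangle$, and the ellipticity lower bound give $g\in L^1(0,\infty)$ and the Ces\`aro-type decay $\int_t^{t+1}g\,d\tau\to0$. This is the right framework, and it is the one the paper implicitly invokes by referring to the proof of \cite[Proposition 4.3]{AdAnLo19}.

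The gap is in the closing step (7), where you claim $g$ is non-increasing. As written, this cannot be established from the identity you describe. First, even after applying Young's inequality, the term $2\sum_{i,\ell,k}(D_ic_{k\ell})D_iu_k\,u_\ell$ cannot be absorbed into $-2\mu_0\|D^2_x\uu\|^2$ or a negative multiple of $\|J_x\uu\|^2$: it contributes a term proportional to $\|\uu\|^2_{L^2_\mu}$ which is bounded away from zero (it converges to $\|\mathcal{M}_{\f}\eta\|^2_{L^2_\mu}$, not to zero), so the resulting differential inequality is of the form $g'\le Cg+C'$, which does not give monotonicity. Second, the cross-terms involving $D_xQ$, $J_x\bm b$, $D_xC$ come multiplied by functions with no global bound: Hypotheses \ref{hyp:special_case} and \ref{hyp:asym_beha} control only the local regularity $C^{1+\alpha}_{\rm loc}$ and the growth of $Q$, $\langle\bm b,\cdot\rangle$; they do not provide the type of one-sided bound on $J_x\bm b$ or the ratio-type controls on $D_xQ$, $D_xC$ that Hypotheses \ref{hyp:smoothness_coeff} provide in Section \ref{section:grad_est}, and those hypotheses are \emph{not} assumed in Section \ref{section:special_case}. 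So the far-field contributions cannot simply be "absorbed", and the Lyapunov function $\varphi$ and the sup bound $\|\uu\|_\infty\le\|\f\|_\infty$ give no leverage on those cross-terms at the gradient level. In short, the route "show $g$ non-increasing by a second dissipation identity" is not viable under the Section~\ref{section:special_case} hypotheses, so the implication $g\in L^1\Rightarrow g(t)\to 0$ is left unproved; a different mechanism (e.g.\ a compactness argument identifying every subsequential limit, or a Barbalat-type argument with genuine control of $g'$ on $[1,\infty)$) is needed to close the proof.
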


In \cite[Proposition 2.3]{AdAnLo19} it has been proved that, for every $x\in\Rd$, $0$ is the unique eigenvalue of $C^P(x)$ on the imaginary axis. The proof relies on the Perron-Frobenius theorem for matrices whose entries are nonnegative, and so we cannot directly adapt it to our situation.

\begin{lemm}
\label{lem:eigenvalue_C_x}
For every $x\in\Rd$, the spectrum of the matrix $C(x)$ is contained in the left half-plane, and $0$ is the unique eigenvalue on the imaginary axis.
\end{lemm}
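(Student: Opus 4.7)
The plan is to work pointwise in $x\in\Rd$ (suppressed in the notation) and to exploit the relationship between $C(x)$ and $C^P(x)$ already used repeatedly in Section~\ref{section:preliminaries}, but at the level of a single matrix. Writing $|v|=(|v_1|,\ldots,|v_m|)\in\Rm$ for the vector of componentwise moduli of $v\in\mathbb{C}^m$, the starting point is the pointwise inequality
\begin{align*}
\mathrm{Re}(\bar v^\top C(x)v) \le \langle C^P(x)|v|,|v|\rangle \le 0,\qquad v\in\mathbb{C}^m.
\end{align*}
The first bound is immediate since the diagonal contributions $c_{kk}|v_k|^2$ coincide, while for $k\neq j$ one has $c_{kj}\mathrm{Re}(\bar v_k v_j)\le |c_{kj}||v_k||v_j|$; the second bound is exactly Hypothesis~\ref{hyp:special_case}(iii). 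Applying this to an eigenvector $v$ with $C(x)v=\lambda v$ and using $\bar v^\top C(x)v=\lambda\|v\|^2$ yields $\mathrm{Re}(\lambda)\|v\|^2\le 0$, hence $\sigma(C(x))$ sits in the closed left half-plane.

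For the sharper claim that $0$ is the only imaginary eigenvalue, I will argue by contradiction: assume $\lambda=i\omega$ with $\omega\neq 0$ has eigenvector $v\neq 0$. Then $\mathrm{Re}(\lambda)=0$ forces all the inequalities above to be equalities; in particular for every $k\neq j$,
\begin{align*}
c_{kj}(x)\mathrm{Re}(\bar v_k v_j)=|c_{kj}(x)||v_k||v_j|.
\end{align*}
Whenever $c_{kj}(x)\neq 0$ and $|v_k|,|v_j|>0$, writing $v_k=|v_k|e^{i\theta_k}$, this equality is equivalent to $e^{i(\theta_j-\theta_k)}=\mathrm{sign}(c_{kj}(x))$, and in particular the product $c_{kj}(x)v_j e^{-i\theta_k}$ equals the nonnegative real number $|c_{kj}(x)||v_j|$.

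Next I pick any index $k$ with $|v_k|>0$ (which exists because $v\neq 0$), and multiply the $k$-th scalar equation $\sum_j c_{kj}(x)v_j = i\omega v_k$ by $e^{-i\theta_k}$. The diagonal contribution becomes $c_{kk}(x)|v_k|$; every off-diagonal term with $c_{kj}(x)=0$ vanishes; every off-diagonal term with $c_{kj}(x)\neq 0$ and $|v_j|>0$ becomes $|c_{kj}(x)||v_j|$ by the phase identity; and every term with $|v_j|=0$ contributes $0$ on both sides. Summing, the left-hand side collapses exactly to $(C^P(x)|v|)_k\in\R$, while the right-hand side is $i\omega|v_k|$. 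Since $\omega|v_k|\neq 0$, this is a contradiction, so $\omega=0$.

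The substantive point, and the only place where care is needed, is in Step~2: the bookkeeping must correctly handle components $v_j$ with $|v_j|=0$ (where the phase $\theta_j$ is undefined) to ensure that such terms cancel consistently on both sides after the rotation by $e^{-i\theta_k}$; all other steps are elementary inequalities and the identity coming from the eigenvalue equation.
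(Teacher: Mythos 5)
Your argument is correct, and it is genuinely different from the one the paper gives. The paper shifts by $\lambda\,\mathrm{Id}$ so that $|C(x)+\lambda\,\mathrm{Id}|=C^P(x)+\lambda\,\mathrm{Id}$ entrywise, invokes the comparison theorem for spectral radii of entrywise-dominated matrices (Horn--Johnson, Theorem 8.1.18) together with the Perron--Frobenius theorem for the nonnegative matrix $C^P(x)+\lambda\,\mathrm{Id}$, and concludes from $r(C(x)+\lambda\,\mathrm{Id})=\lambda$ that no eigenvalue of $C(x)$ can have the form $i\gamma$, $\gamma\neq 0$. You instead argue at the level of the Hermitian quadratic form, deriving $\mathrm{Re}(\bar v^\top C(x)v)\le\langle C^P(x)|v|,|v|\rangle\le 0$ directly from Hypothesis \ref{hyp:special_case}(iii), and then extracting a contradiction from the case of equality via a phase analysis of the eigenvector equation. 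Your route is more elementary and self-contained (it does not use the entrywise-domination spectral-radius comparison nor Perron--Frobenius), at the cost of a somewhat more delicate equality/phase bookkeeping; the paper's route is shorter but outsources two nontrivial results from nonnegative-matrix theory. In your Step~2, the key points — that equality in $\mathrm{Re}(\lambda)=0$ forces $c_{kj}(x)\mathrm{Re}(\bar v_k v_j)=|c_{kj}(x)|\,|v_k|\,|v_j|$ termwise, that this pins down the relative phases $e^{i(\theta_j-\theta_k)}=\mathrm{sign}(c_{kj}(x))$ when both moduli are positive and $c_{kj}(x)\neq 0$, and that terms with $|v_j|=0$ vanish on both sides of the rotated $k$-th equation — all check out, and multiplying the eigenvector equation by $e^{-i\theta_k}$ does reduce the left side to the real number $(C^P(x)|v|)_k$ while the right side $i\omega|v_k|$ is nonzero purely imaginary, which is the contradiction you need.
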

\begin{proof}
At first, we notice that $C(x)$ has not eigenvalues with positive real part, since $C(x)$ is non positive for every $x\in\Rd$.

Next, we fix $x\in\Rd$ and let $\lambda>0$ be such that $\lambda+\mu>0$ for every real eigenvalue $\mu=\mu(x)$ of $C(x)$ and for every real eigenvalue $\mu=\mu(x)$ of $C^P(x)$, and $\lambda+c_{ii}(x)>0$ for every $i=1,\ldots,m$. It follows that $|C(x)+\lambda{\rm Id}|= C^P(x)+\lambda{\rm Id}$, where for a matrix $A=(a_{ij})_{i,j=1}^{m}$ we set  $|A|:=(|a_{ij}|)_{i,j=1}^{m}$. From \cite[Theorem 8.1.18]{HoJo13} we infer that $r(C(x)+\lambda{\rm Id})\leq r(C^P(x)+\lambda{\rm Id})$, where $r(A)$ denotes the spectral radius of the matrix $A$. Since $0$ is an eigenvalue both of $C(x)$ and $C^P(x)$, $\lambda$ is an eigenvalue both of $C(x)+\lambda{\rm Id}$ and $C^P(x)+\lambda {\rm Id}$. This implies that $r(C(x)+\lambda{\rm Id})\geq \lambda$, and from the Perron-Frobenius Theorem, applied to the matrix $C^P(x)+\lambda{\rm Id}$, whose entries are all nonnegative, we get $r(C^P(x)+\lambda{\rm Id})=\lambda$. 
This means that $\lambda=r(C^P(x)+\lambda{\rm Id})\geq r(C(x)+\lambda{\rm Id})\geq \lambda$, which gives $r(C(x)+\lambda{\rm Id})=\lambda$. Hence, $C(x)+\lambda{\rm Id}$ has no eigenvalue of the form $\lambda+i \gamma$, $\gamma\neq0$, which implies that $C(x)$ has no eigenvalues of the form $i\gamma$, $\gamma\neq0$.
\end{proof}

Let $\bm \mu:=\eta \mu$. We set
$\mathcal M_{\f}:=\sum_{i=1}^m\int_{\Rd}f_i d\mu_i$ for every $\f\in B_b(\Rd;\Rm)$. 
We characterize the asymptotic behaviour of $\T(t)$.

\begin{thm}
For every $\f\in B_b(\Rd;\Rm)$ the function $\T(t)\f$ converges to $\mathcal M_{\f}\eta$ locally uniformly in $\Rd$ as $t$ tends to $\infty$. As a byproduct, $\T(t)\f$ converges to $\mathcal M_{\f}\eta$ in $L^p_{\bm \mu}(\Rd;\Rm)$ as $t$ tends to $\infty$, for every $p\in[1,\infty)$.
\end{thm}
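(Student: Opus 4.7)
The plan is to first establish the locally uniform convergence for $\f\in C_c^{3+\alpha}(\Rd;\Rm)$ by combining Proposition \ref{prop:conv_L2norm}, a compactness argument, and the spectral information provided by Lemma \ref{lem:eigenvalue_C_x}, then to extend to $\f\in B_b(\Rd;\Rm)$ via the pointwise bound \eqref{spacial_case_SP_1}, and finally to derive the $L^p_{\bm\mu}$-statement by dominated convergence. Since $\T(t)\eta=\eta$ by Proposition \ref{prop:punti_fissi_T_eta}, and the normalisation $\mathcal M_\eta=1$ is implicit in the choice $\bm\mu=\eta\mu$, it suffices to prove that $\T(t)\f\to\bm 0$ locally uniformly whenever $\mathcal M_\f=0$.

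Fix $\f\in C_c^{3+\alpha}(\Rd;\Rm)$ with $\mathcal M_\f=0$. By \eqref{spacial_case_SP_1}, the family $\{\T(t)\f\}_{t\ge 1}$ is uniformly bounded in $C_b(\Rd;\Rm)$, and interior Schauder estimates (as in Proposition \ref{pro:appr_sol_neumann}) make it equibounded in $C^{1+\alpha/2,2+\alpha}$ on every compact subset of $(0,\infty)\times\Rd$. Given any $t_n\to\infty$, Ascoli--Arzel\`a yields a subsequence (still denoted $(t_n)$) along which $\T(t_n)\f\to\g$ in $C^1_{\rm loc}(\Rd;\Rm)$; Proposition \ref{prop:conv_L2norm} together with the equivalence of $\mu$ with the Lebesgue measure forces $J_x\g\equiv\bm 0$, so $\g\equiv v$ for some constant $v\in\Rm$. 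The invariance of $\bm\mu$ and Vitali's theorem (applicable since $\|\T(t)\f\|\le\|\f\|_\infty$ and $|\bm\mu|(\Rd)<\infty$) give $\langle v,\eta\rangle=\mathcal M_v=\lim_n\mathcal M_{\T(t_n)\f}=0$.

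The main obstacle is showing that this constant $v$ is zero. I would first prove that $w(s):=\T(s)v$ is constant in $x\in\Rd$ for every $s\ge 0$: applying the extraction to the shifted sequence $\T(s+t_n)\f=\T(s)\T(t_n)\f$ and using the continuity of $\T(s)$ (Remark \ref{rmk-claudio} together with Proposition \ref{prop:conv_loc_unif}(ii)) shows that the whole sequence converges locally uniformly to $\T(s)v$, and the same compactness/gradient argument of the previous paragraph forces the limit to be constant in $x$. Hence $w'(s)=\A w(s)=C(x_0)w(s)$ for every $x_0\in\Rd$, and $w(s)=e^{sC(x_0)}v$ is independent of $x_0$. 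The bound $\|w(s)\|\le\|v\|$ from \eqref{spacial_case_SP_1} rules out Jordan components at the imaginary axis of size larger than one, and Lemma \ref{lem:eigenvalue_C_x} then yields $e^{sC(x_0)}v\to P(x_0)v$ as $s\to\infty$, where $P(x_0)$ is the spectral projection onto ${\rm Ker}(C(x_0))$. Since this limit is independent of $x_0$, $P(x_0)v\in\bigcap_{x_0\in\Rd}{\rm Ker}(C(x_0))={\rm span}\{\eta\}$ by Proposition \ref{prop:punti_fissi_T_eta}; pairing with $\eta$ and using $\mathcal M_{w(s)}=\mathcal M_v=0$ gives $P(x_0)v=\bm 0$, so $v$ lies in the strictly decaying subspace of each $C(x_0)$ and $\|e^{sC(x_0)}v\|\le K e^{-\varepsilon s}\|v\|$ for some $K,\varepsilon>0$. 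A bootstrap closes the argument: for each fixed $s$, extract a further subsequence so that $\T(t_n-s)\f\to v_s$ locally uniformly (with $\|v_s\|\le\|\f\|_\infty$ by \eqref{spacial_case_SP_1}); then $\T(s)v_s=v$ and $v_s$ satisfies $P(x_0)v_s=\bm 0$ by the same reasoning, so $\|v\|=\|e^{sC(x_0)}v_s\|\le Ke^{-\varepsilon s}\|\f\|_\infty$, and letting $s\to\infty$ yields $v=\bm 0$. Since every $t_n\to\infty$ admits such a subsequence, $\T(t)\f\to\bm 0$ locally uniformly.

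For general $\f\in B_b(\Rd;\Rm)$, I would approximate $\f$ boundedly pointwise by $\f_n\in C_c^{3+\alpha}(\Rd;\Rm)$ with $\|\f-\f_n\|_{L^2_\mu}\to 0$ and $\mathcal M_{\f_n}\to\mathcal M_\f$. The bound $\|\T(t)\f-\T(t)\f_n\|^2\le T(t)\|\f-\f_n\|^2$ from \eqref{spacial_case_SP_1}, combined with the scalar ergodic limit $T(t)g\to\int g\,d\mu$ locally uniformly for $g\in B_b(\Rd)$ (standard under Hypothesis \ref{hyp:special_case}(iv) via the Lyapunov function $\varphi$) and the convergence already established for $\f_n$, delivers $\T(t)\f\to\mathcal M_\f\eta$ locally uniformly. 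Finally, the $L^p_{\bm\mu}$-convergence for $p\in[1,\infty)$ follows from dominated convergence: $|\bm\mu|$ is finite, $\|\T(t)\f-\mathcal M_\f\eta\|$ is uniformly bounded by $\|\f\|_\infty+|\mathcal M_\f|\,\|\eta\|$, and the locally uniform convergence combined with the tightness of $|\bm\mu|$ yields the required integral control.
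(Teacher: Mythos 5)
The paper's proof is essentially a one-line reference: it states that Proposition \ref{prop:conv_L2norm}, Lemma \ref{lem:eigenvalue_C_x} and \cite[Proposition 2.6]{AdAnLo19} allow one to repeat the argument of \cite[Theorem 4.4]{AdAnLo19}, without spelling it out. Your proposal is a correct detailed reconstruction of exactly that outsourced argument, and it uses precisely the three ingredients the paper names: the vanishing in $L^2_\mu$ of $\|J_x\T(t)\f\|$ to force any $C^1_{\rm loc}$-subsequential limit to be a constant vector $v$; the spectral structure of $C(x)$ from Lemma \ref{lem:eigenvalue_C_x} to reduce the dynamics of $\T(s)v$ to the ODE $w'(s)=C(x_0)w(s)$ and pin $P_0(x_0)v$ to ${\rm span}\{\eta\}$; and the invariance of $\bm\mu$ to kill the $\eta$-component. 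The only place where you had to supply a piece of machinery not explicitly mentioned in the paper's one-liner is the backward-shift bootstrap ($\T(t_n-s)\f\to v_s$, then $v=e^{sC(x_0)}v_s$) needed to pass from $\T(s)v\to\bm 0$ to $v=\bm 0$; this is almost certainly what \cite[Proposition 2.6]{AdAnLo19} encapsulates, so the approaches coincide in substance. The minor imprecisions -- calling $P(x_0)$ the ``spectral projection onto ${\rm Ker}(C(x_0))$'' when a priori it projects onto the generalized eigenspace (the boundedness of $e^{sC(x_0)}v$ is precisely what collapses it to the kernel for the vector in question), and invoking Proposition \ref{pro:appr_sol_neumann} rather than Proposition \ref{prop:app_stime_varie}(iv) for the interior Schauder estimates -- do not affect correctness. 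The $B_b$-extension via the scalar pointwise bound \eqref{spacial_case_SP_1} and the ergodic limit for $T(t)$, and the final $L^p_{\bm\mu}$-convergence by dominated convergence on the finite measure $|\bm\mu|$, are also sound.
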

\begin{proof}
Proposition \ref{prop:conv_L2norm}, Lemma \ref{lem:eigenvalue_C_x} and \cite[Proposition 2.6]{AdAnLo19} allow us to repeat the proof of \cite[Theorem 4.4]{AdAnLo19} to get the assertion.
\end{proof}

\section{Examples}
In this section, we provide two examples of operators to which the results of this paper apply. The operator $\A$ in the first example satisfies Hypotheses \ref{hyp-base} and \ref{hyp:smoothness_coeff}, so that we can apply Theorem
\ref{teo-Schau-est} to $\A$, while the operator in the latter example enjoys Hypotheses \ref{hyp:special_case} and \ref{hyp:asym_beha}, so that we can apply Theorem \ref{thm-3.4} to this operator.
\begin{example}
{\rm Let the coefficients of the operator $\A$ in \eqref{picco} be defined as follows:
$q_{ij}^k(t,x)=\zeta_{ij}^k(t)(1+|x|^2)^{\alpha_{ij}^k}$, $b_i^k(t,x)=-\eta_i^k(t)x_i(1+|x|^2)^{\beta_i^k}$ and $c_{kh}(t,x)=\theta_{kh}(t)(1+|x|^2)^{\gamma_{kh}}$
for every $(t,x)\in I\times \Rd$, $i,j=1,\ldots,d$ and $h,k=1,\ldots,m$. 
We assume the following additional assumptions.
\begin{enumerate}[\rm (a)]
\item 
the functions $\zeta_{ij}^k=\zeta_{ji}^k$, $\eta_i^k$ and $\theta_{kh}$ belong to $C^{\alpha/2}_{\rm loc}(I)$ for some $\alpha\in(0,1)$, every $i,j=1,\ldots,d$ and $h,k=1,\ldots,m$. Moreover, each function $\eta_i^k$ is positive in $I$, and $\alpha_{ij}^k=\alpha_{ji}^k$, $\beta_i^k$ and $\gamma_{kh}$ are nonnegative numbers;
\item 
$\theta_{kk}<0$ in $I$ and $\gamma_{kh}<\gamma_{kk}$ 
for every $h,k=1,\ldots,m$, and there exists no nontrivial sets $K\subset \{1,\ldots,m\}$ such that $\theta_{kh}$ identically vanishes on $I$ for every $k\in K$ and $h\notin K$;
\item 
for every $k=1,\ldots,m$ it holds that $\alpha_{\min}^k:=\min_{i=1,\dots,d} \alpha_{ii}^k\geq \max_{i\neq j}\alpha_{ij}^k$ and
\begin{align}
\label{exa_cond_ell}
\min_{i=1,\ldots,d}\zeta_{ii}^k-\max_{i=1,\ldots,d}\bigg (\sum_{j\neq i}|\zeta_{ij}^k|^2\bigg )^{\frac{1}{2}}>0\quad {\rm in~} I;
\end{align}
\item 
for every $k=1\ldots,m$ it holds that $\max_{i=1,\ldots,d}\alpha_{ii}^k\leq 1+\max_{i=1,\ldots,d}\{\gamma_{kk}, \beta_i^k\}$.
\end{enumerate}

Under these assumptions, Hypotheses $\ref{hyp-base}$ are satisfied. Indeed, Hypothesis \ref{hyp-base}(i) is immediately verified, and, as far as Hypothesis \ref{hyp-base}(ii) is concerned, we fix a bounded set $J\subset I$ and observe that
\begin{align*}
& \langle Q^k(t,x)\xi,\xi\rangle
\geq (1+|x|^2)^{\alpha_{\min}^k}\bigg [\min_{i=1,\ldots,d}\zeta_{ii}^k(t)-\max_{i=1,\ldots,d}\bigg (\sum_{j\neq i}|\zeta_{ij}^k(t)|^2\bigg )^{\frac{1}{2}}\bigg ]|\xi|^2=:\mu^k(t,x)|\xi|^2
\end{align*}
for every $(t,x)\in J\times\Rd$, $k=1,\ldots,m$ and $\xi\in\Rd$, and condition \eqref{exa_cond_ell} implies that the infimum over $J\times\Rd$ of $\mu^k$ is positive. Hypothesis \ref{hyp-base}(iii) is satisfied by the function $\boldsymbol\varphi:\Rd\to\R^m$, defined by  $\boldsymbol\varphi(x):=(1+|x|^2)\1$ for every $x\in\Rd$,
due to conditions (a)-(c).
Finally, Hypotheses \ref{hyp-base}(iv)-(v) follow straightforwardly from condition (b).

We now introduce the following more restrictive conditions on $\eta_i^k$ and $\beta_i^k$.

\begin{enumerate}[\rm (a$^\prime$)]
\item
$\eta_i^k=\eta^k$ and $\beta_i^k=\beta^k$ for every $i$ and some $\eta^k$ and $\beta^k$;
\item 
$\alpha_{ij}^k\leq \alpha_{\min}^k+1/2$ for every $i,j=1,\ldots,d$.
\end{enumerate}

Under these additional assumptions, also Hypotheses \ref{hyp:smoothness_coeff} are satisfied. We just need to verify the last three assumptions in Hypotheses \ref{hyp:smoothness_coeff}. To begin with, we observe that $r^k(t,x)=-\eta^k(t)(1+|x|^2)^{\beta^k}$ for every $(t,x)\in [s,T]\times \Rd$ and $k=1,\ldots,m$.
To prove that Hypothesis \ref{hyp:smoothness_coeff}(iii) is satisfied, it is enough to observe that
\begin{align*}
|{\rm Tr}(Q^{k}(t,x))|+|(Q^{k}(t,x)x)_i|\leq C(1+|x|^2)^{\max_{j=1,\ldots,d}\alpha_{ij}^k+1/2}
\end{align*}
for some positive constant $C$, every $x\in\Rd$ and $i=1,\ldots d$, and $\mu^k(t,x)\sim(1+|x|^2)^{\alpha_{\min}^k}$ as $|x|$ tends to $\infty$ for every $k=1,\ldots,m$. Hence, condition (b$^{\prime}$) allows us to conclude.

As far Hypothesis \ref{hyp:smoothness_coeff}(iv) is concerned, we observe that for every $k=1,\ldots,m$, $\ell=2,3$ and $s=1,2,3$
it holds that $b^{k,\ell}\sim (1+|x|^2)^{\beta^k+(1-\ell)/2}$ and $q^{k,s}\sim(1+|x|^2)^{\max_{i,j}\alpha_{ij}^k-s/2}$ as $|x|$ tends to $\infty$. Hence, Hypothesis \ref{hyp:smoothness_coeff}(iv) is fulfilled if
$\max_{i,j}\alpha_{ij}^k-1/2\leq \alpha_{\min}^k$
and these inequalities are verified under the conditions (a$^{\prime}$) and (b$^{\prime}$). Finally, since
$|M_{k}(t,x)|\sim(1+|x|^2)^{\gamma_{kk}}$ and $c^{k,s}\sim(1+|x|^2)^{\gamma_{kk}-s/2}$ as $|x|$ tends to $\infty$, for $s=1,2,3$,
it follows that also Hypothesis \ref{hyp:smoothness_coeff}(v) is satisfied.}
\end{example}

\begin{example}
{\rm Let us assume that the coefficients of the autonomous operator $\A$ are given by $q_{ij}(x)=\zeta_{ij}(1+|x|^2)^{\alpha}$,  $b_i(x)=-\theta_ix_i(1+|x|^2)^{\beta}$ and
\begin{align*}
 C(x)=(1+|x|^2)^\gamma\left(
\begin{matrix}
-1 & 0 & - 1\\
0 & -3 & \sqrt 3 \\
-1 & \sqrt 3 & -2
\end{matrix}
\right)
\end{align*}
for every $x\in\Rd$, where
the matrix $R=(\zeta_{ij})$ is symmetric and positive definite,
$\theta_i$ are all positive constants and $\alpha$, $\beta$, $\gamma$ are nonnegative constants, with $\alpha<\beta+1$.

If we consider the function $\varphi:\Rd\to\R$, defined by $\varphi(x)=(1+|x|^2)^k$ for every $x\in\Rd$ and some $k=\max\{\alpha-1,1\}$, then Hypotheses \ref{hyp:special_case} and \ref{hyp:asym_beha} are satisfied. Finally, we observe that the eigenvalues of $C$ and of $C^P$ are $0, -3+\sqrt 2$ and $-3-\sqrt 2$, $\eta=(-\sqrt 3,1,\sqrt 3)$ and $\xi=(\sqrt 3,1,\sqrt 3)$.}
\end{example}

\appendix
\section{}

Here, we collect some useful a priori estimates on $\G(t,s)\f $ and on $\T_t(\tau)\f$ that we use in the paper.

\begin{prop}
\label{prop:app_stime_varie}
Let $\Omega\subset \Rd$ be a bounded open set and let the coefficients of the operator $\A$ in \eqref{picco} be $\alpha$-H\"older continuous in $[a,b]\times\Omega$ for some $\alpha\in (0,1)$ and $a,b\in I$, with $a<b$. Further, assume that there exists a positive constant $\mu_0$ such that $\langle Q^k(t,x)\xi,\xi\rangle\ge \mu_0|\xi|^2$ for every $(t,x)\in [a,b]\times\Omega$ and $\xi\in\Rd$. Finally, let $B_{R_1}(x_0)$ be a ball compactly contained in $\Omega$. Then, the following properties are satisfied.
\begin{enumerate}[\rm (i)]
\item 
Let $\uu\in C_b([a,b]\times \Omega;\Rm)\cap C^{1,2}((a, b)\times \Omega;\Rm)$ satisfy the equation $D_t\uu=\A\uu+\g$ in $(a, b)\times \Omega$, for some $\g\in C^{\alpha/2,\alpha}((a,b)\times \Omega;\Rm)$.  
Then, there exists a positive constant $C$ which depends on $R_1$, $\mu_0$, $d$, $m$, $a$, $b$ and $\sup_{r\in[a,b]}\|\A(r)\|_{\alpha,\Omega}$ such that
\begin{align*}
(t-a)^{\frac{j}{2}}\|\uu(t,\cdot)\|_{C^j_b(B_{R_1}(x_0);\Rm)}\le C\big (\|\uu\|_{C_b((a,b)\times {\Omega};\Rm)}+\|\g\|_{C^{\alpha/2,\alpha}((a,b)\times \Omega;\Rm)}\big ).
\end{align*}
for $j=1,2$ and $t\in(a,b)$.
\item 
In addition to the assumptions in $(i)$, assume that $\uu \in C^{1+\alpha/2,2+\alpha}_{\rm loc}((a, b]\times \Omega;\Rm)$. Then, for every $\delta\in(0,(b-a)/2)$ and every pair of bounded open sets $\Omega_1,\Omega _2$, such that $\Omega_1\Subset \Omega_2\Subset \Omega$, there exists a positive constant $K$, depending on $\Omega_1,\Omega_2,\delta,a,b$ and $\sup_{r\in[a,b]}\|\A(r)\|_{\alpha,\Omega}$, such that
\begin{align*}
\;\;\;\;\;\;\|\uu\|_{C^{1+\alpha/2,2+\alpha}((a+2\delta,b)\times \Omega_1;\Rm)}
\!\leq\! K\big ( \|\uu\|_{C_b((a+\delta,b)\times \Omega_2;\Rm)}\!+\!\|\g\|_{C^{\alpha/2,\alpha}((a+\delta,b)\times \Omega_2;\Rm)}\big).
\end{align*}
\item 
Fix $r\in[a,b]$ and $\mathcal T>0$, and let $\uu$ be a function in $C_b([0,\mathcal T]\times \Omega;\Rm)\cap C^{1,2}((0,\mathcal T)\times \Omega;\Rm)$ such that  $D_t\uu=\A(r)\uu+\g$ in $(0,\mathcal T)\times \Omega$, where $\g\in C^{\alpha/2,\alpha}((0,\mathcal T)\times \Omega;\Rm)$. Then, 
there exists a positive constant $C$ which depends on $R_1$, $\mu_0$, $d$, $m$, $\sup_{r\in[a,b]}\|\A(r)\|_{\alpha,\Omega}$ and on $[a,b]$,
but is independent of $r$, such that
\begin{align}
t^{\frac{j}{2}}\|\uu(t,\cdot)\|_{C^j_b(B_{R_1}(x_0);\Rm)}
\leq & C\big (\|\uu\|_{C_b((0,\mathcal T)\times {\Omega};\Rm)}+\|\g\|_{C^{\alpha/2,\alpha}((0,\mathcal T)\times \Omega;\Rm)}\big ),
\label{app_stima_finale_1}
\end{align}
for $j=1,2$ and $t\in(0,\mathcal T)$.
\item 
Fix $r\in[a,b]$ and $\mathcal T>0$, and let $\uu\in C_{\rm loc}^{1+\alpha/2,2+\alpha}((0,\mathcal T]\times \Omega;\Rm)$ satisfy the equation $D_t\uu=\A(r)\uu+\g$ in $(0,\mathcal T)\times \Omega$, with $\g\in C^{\alpha/2,\alpha}_{\rm loc}((0,\mathcal T]\times \Omega;\Rm)$. Then, for every $\delta\in (0,\mathcal T)$ and every pair of bounded open sets $\Omega_1,\Omega _2$, such that $\Omega_1\Subset \Omega_2\Subset \Omega$, there exists a positive constant $K$, depending on $\Omega_1$, $\Omega_2$, $\delta$, $\mathcal T$ and $\sup_{r\in[a,b]}\|\A(r)\|_{\alpha,\Omega}$, and independent of $r$, such that
\begin{align}
\;\;\;\;\;\;\|\uu\|_{C^{1+\alpha/2,2+\alpha}((2\delta,\mathcal T)\times \Omega_1;\Rm)}\leq K\big (\|\uu\|_{C_b((\delta,\mathcal T)\times \Omega_2;\Rm)}+\|\g\|_{C^{\alpha/2,\alpha}((\delta,\mathcal T)\times \Omega_2;\Rm)}\big ).
\label{int_sch_est_app}
\end{align}
\end{enumerate}
\end{prop}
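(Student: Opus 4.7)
My approach is to reduce all four statements to classical scalar interior parabolic regularity, exploiting the fact that the only coupling between components of $\uu$ is the zeroth-order term $C\uu$. For each $k\in\{1,\ldots,m\}$, the component $u_k$ satisfies the scalar equation
\begin{equation*}
D_t u_k = \mathcal{A}_{k,0} u_k + h_k, \qquad h_k := g_k + \sum_{\ell=1}^m c_{k\ell} u_\ell,
\end{equation*}
where $\mathcal{A}_{k,0}=\sum_{i,j} q^k_{ij} D_{ij}+\sum_j b^k_j D_j$ is a uniformly elliptic scalar operator with Hölder continuous coefficients. Since the entries $c_{k\ell}$ are $\alpha$-Hölder continuous on $[a,b]\times\Omega$ and $\uu$ is bounded, I can control
\begin{equation*}
\|h_k\|_{C^{\alpha/2,\alpha}((a,b)\times\Omega)} \le C\bigl(\|g_k\|_{C^{\alpha/2,\alpha}((a,b)\times\Omega)}+\|\uu\|_{C_b((a,b)\times\Omega;\R^m)}\bigr),
\end{equation*}
with $C$ depending only on $\sup_{r\in[a,b]}\|\A(r)\|_{\alpha,\Omega}$.

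For (ii), I would then invoke the classical scalar interior Schauder estimate (see e.g.\ \cite[Chapter IV, Theorem 10.1]{LadSolUra68Lin}) applied to each $u_k$ on the nested cylinders $(a+2\delta,b)\times\Omega_1\Subset (a+\delta,b)\times\Omega_2$, and combine it with the above bound on $h_k$; summing over $k$ and absorbing the $\|\uu\|_\infty$ term (which is available on the larger cylinder) yields the inequality. For (i), the weighted estimates follow by the usual parabolic rescaling trick: on a parabolic cylinder of size $\rho:=(t-a)^{1/2}$ centered at $(t,x_0)$, rescaling time and space by $\rho^2$ and $\rho$ respectively brings the equation to a unit cylinder on which the Schauder/interior estimate just recalled yields bounds on spatial derivatives up to order two; transforming back produces the prefactors $(t-a)^{j/2}$ on $\|D^j\uu(t,\cdot)\|_\infty$, again summing componentwise.

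For (iii) and (iv), the operator is autonomous with $r\in[a,b]$ frozen, and the plan is identical. The only new point to track is that the constants in the scalar Schauder estimates depend on the coefficients of $\mathcal A_{k,0}(r,\cdot)$ solely through $\mu_0$ and their $C^\alpha_b(\Omega)$-norms. Both quantities are dominated by $\mu_0$ and $\sup_{r\in[a,b]}\|\A(r)\|_{\alpha,\Omega}$ uniformly in $r$; hence the constants in \eqref{app_stima_finale_1} and \eqref{int_sch_est_app} can be chosen independently of $r$.

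The main obstacle is purely bookkeeping: one has to carefully verify that the constants in the cited scalar references really depend only on the listed quantities, and to execute the rescaling argument cleanly near $t=a$ (or $t=0$) to produce the weights $(t-a)^{j/2}$ and $t^{j/2}$. No new analytical phenomenon arises, because the coupling through $C\uu$ is a lower-order perturbation that is harmless for interior regularity.
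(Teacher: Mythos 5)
Your reduction to scalar equations---moving the coupling term $C\uu$ to the right-hand side and invoking classical scalar interior Schauder theory plus parabolic rescaling---is a reasonable blueprint, and your observation that the Schauder constants depend on the coefficients only through $\mu_0$ and their $C^\alpha$-norms (hence can be taken uniform in $r\in[a,b]$) is exactly what is needed for parts (iii) and (iv). However, there is a genuine gap at the displayed estimate
\begin{equation*}
\|h_k\|_{C^{\alpha/2,\alpha}((a,b)\times\Omega)}\le C\bigl(\|g_k\|_{C^{\alpha/2,\alpha}((a,b)\times\Omega)}+\|\uu\|_{C_b((a,b)\times\Omega;\Rm)}\bigr).
\end{equation*}
This is false as stated: the parabolic H\"older seminorm of the product $c_{k\ell}u_\ell$ is bounded by $\|c_{k\ell}\|_\infty[u_\ell]_{C^{\alpha/2,\alpha}}+[c_{k\ell}]_{C^{\alpha/2,\alpha}}\|u_\ell\|_\infty$, which involves the H\"older seminorm of $u_\ell$, not merely its sup-norm. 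As written, the scalar Schauder estimate would then bound $\|\uu\|_{C^{1+\alpha/2,2+\alpha}}$ on $\Omega_1$ in terms of $\|\uu\|_{C^{\alpha/2,\alpha}}$ on $\Omega_2$, and the argument does not close, since you want the right-hand side to carry only the $C_b$-norm of $\uu$.

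An intermediate step is needed, and it is not pure bookkeeping. The standard route (and, judging from the citation, the one used in \cite{AALT17}, to which the paper defers) is a two-step bootstrap: first, because $h_k$ is merely \emph{bounded} whenever $\uu$ and $\g$ are bounded, parabolic $L^p$-estimates applied componentwise give $\uu\in W^{1,2}_{p,\rm loc}$ for every $p<\infty$, hence $\uu\in C^{\alpha/2,\alpha}$ on slightly smaller cylinders with norm controlled by $\|\uu\|_{C_b}+\|\g\|_\infty$ on the larger one; second, plug this back in---now $h_k$ genuinely lies in $C^{\alpha/2,\alpha}$---and apply the scalar Schauder estimate. An alternative is to stay within Schauder theory but run an interpolation-and-absorption argument using weighted norms that encode the distance to the parabolic boundary of the reference cylinder, so that the H\"older seminorm of $\uu$ appearing in the bound for $h_k$ can be absorbed into the left-hand side. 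Either fix is essential to make your plan rigorous. Once that step is in place, your rescaling argument for (i)/(iii) and your uniformity observation for the frozen-coefficient cases (iii)/(iv) are sound.
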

\begin{proof}
Properties (i) and (ii) can be proved, with slight modifications, arguing as in the proofs of \cite[Proposition A.1 \& Theorem A.2]{AALT17}; whereas properties (iii) and (iv) are a byproduct of (i) and (ii). Let us notice that we can choose the constants in \eqref{app_stima_finale_1} and \eqref{int_sch_est_app} to be uniform with respect to $r\in [a,b]$ since the constants in (i) and (ii) are uniform in $(a,b)$.
\end{proof}

\end{document}